%% file: main.tex
\documentclass[11pt]{article}
\usepackage{amssymb}
\usepackage{fancybox}
\usepackage{mathrsfs}
\usepackage{amsrefs}
\usepackage{mathtools}
\usepackage{yhmath}
\usepackage{subcaption}
\usepackage{amscd}
\usepackage{amsmath}
\usepackage{amsthm}
\usepackage{mymacros}
\usepackage{enumitem}

\usepackage{comment}
\usepackage[all]{xy}
\usepackage{xypic}
\xyoption{line}   
\usepackage[margin=3cm]{geometry}
\usepackage{graphicx}
\usepackage{eucal}
\usepackage{bm}
\usepackage{cancel}
\usepackage{color}
\usepackage{accents}

\usepackage{hyperref}
 \usepackage{scalerel,stackengine}

\theoremstyle{plain}
\newtheorem{thm}{Theorem}[section]
\newtheorem{prop}[thm]{Proposition}
\newtheorem{lem}[thm]{Lemma}
\newtheorem{defilem}[thm]{Definition-Lemma}
\newtheorem{cor}[thm]{Corollary}

\theoremstyle{definition}
\newtheorem{defi}[thm]{Definition}

\newtheorem{rem}[thm]{Remark}

\newcommand{\ZZ}{\mathbb{Z}}
\newcommand{\calA}{\mathcal{A}}

\newcommand{\QQ}{\mathbb{Q}}

\newcommand{\CC}{\mathbb{C}}

\newcommand{\CS}{{\mathbb{C}^{*}}}

\newcommand{\GR}{\mathrm{Gr}}

\DeclareMathOperator{\Hom}{Hom}

 \newcommand{\wt}[1]{\widetilde{#1}}

\newcommand{\ov}[1]{\overline{#1}}

\newcommand{\DR}{\mathrm{R}}

\newcommand{\simto}{\overset{\sim}{\to}}

\newcommand{\lra}{\longrightarrow}

\newcommand{\calS}{\mathcal{S}}

\newcommand{\calM}{\mathcal{M}}

\newcommand{\calC}{\mathscr{C}}

\newcommand{\calL}{\mathcal{L}}

\newcommand{\calG}{\mathcal{G}}

\newcommand{\Ker}{\mathrm{Ker}}
\newcommand{\Ima}{\mathrm{Im}}

\newcommand{\Var}{\mathrm{Var}}
\newcommand{\Can}{\mathrm{can}}

\newcommand{\MHM}{\mathrm{MHM}}

\newcommand{\iden}{\mathrm{id}}

\newcommand{\kaa}{\mathrm{Ker}}
\newcommand{\cokaa}{\mathrm{Coker}}

\newcommand{\bld}[1]{\boldsymbol{#1}}

\newcommand{\lag}{\langle}
\newcommand{\rag}{\rangle}
\stackMath
\newcommand\reallywidehat[1]{%
\savestack{\tmpbox}{\stretchto{%
  \scaleto{%
    \scalerel*[\widthof{\ensuremath{#1}}]{\kern-.6pt\bigwedge\kern-.6pt}%
    {\rule[-\textheight/2]{1ex}{\textheight}}%WIDTH-LIMITED BIG WEDGE
  }{\textheight}% 
}{0.5ex}}%
\stackon[1pt]{#1}{\tmpbox}%
}
\newcommand{\Id}{\mathrm{id}}

\newcommand{\diag}{\mathrm{Diag}}

\newcommand{\Derived}{\mathrm{R}}

\makeatletter
\newcommand{\subjclass}[2][2020]{%
  \let\@oldtitle\@title%
  \gdef\@title{\@oldtitle\footnotetext{#1 \emph{Mathematics subject classification.} #2}}%
}
\newcommand{\keywords}[1]{%
  \let\@@oldtitle\@title%
  \gdef\@title{\@@oldtitle\footnotetext{\emph{Key words and phrases.} #1.}}%
}
\makeatother

\title{Classification of can-Var diagrams of mixed Hodge structures and monodromic mixed Hodge modules on $\CC$}
 \date{}
\author{Takahiro Saito}

\subjclass{14F10, 32S35, 32S40}
\keywords{D-module, Perverse sheaf, Mixed Hodge module}

\begin{document}
\maketitle

\begin{abstract}
By our previous work, 
the category of unipotent monodromic mixed Hodge modules on $\CC$ is equivalent to the category of can-Var diagrams of mixed Hodge structures.
In this paper, we show that the associated graded diagram with respect to the weight filtrations decomposes as a finite direct sum of four basic types tensored with pure Hodge structures,
which we call a split graded model.
We then show that every unipotent monodromic mixed Hodge module is represented by a split graded model together with a finite family of $\CC$-linear maps.
We also obtain an analogous description in the case where the monodromy has no eigenvalue $1$.
\end{abstract}
\tableofcontents

\section{Introduction}

The category $\MHM(X)$ of graded-polarizable mixed Hodge modules on a complex manifold $X$ was introduced by M. Saito~\cite{HM88}, \cite{MHM} in order
to provide a functorial framework for Hodge theory.
Roughly speaking, its objects are regarded as extensions of variations of mixed Hodge structure defined on open dense subsets of their supports.
While the general theory of mixed Hodge modules is well established,
mixed Hodge modules themselves are still complicated in general
and difficult to describe explicitly.
Throughout this paper, 
mixed Hodge structures are always assumed to be graded-polarizable,
and mixed Hodge modules are always assumed to be algebraic and graded-polarizable.
For simplicity, we will often omit these adjectives.

In \cite{SaiMon} and \cite{SaiHodgefilt},
we studied how a class of mixed Hodge modules called monodromic mixed Hodge modules can be described.
For an affine line $\CC_t$ with coordinate $t$,
let $\calM$ be a mixed Hodge module $(M,F^\bullet M,K,W_\bullet K)$,
where $M$ is the underlying $D$-module, $F^\bullet M$ is the Hodge filtration,
$K$ is the underlying $\QQ$-perverse sheaf and
$W_\bullet K$ is the weight filtration.
If the restriction $K|_{\CS}$ of $K$ to $\CS:=\CC_t\setminus \{0\}$ is a shifted local system,
then we say that $\calM$ is monodromic.
We denote by $\MHM_{\mathrm{mon}}(\CC_t)$ the category of 
monodromic mixed Hodge modules on $\CC_t$.
We consider the tuple 
\[(\psi_{t}\calM,\phi_{t,1}\calM,\Can,\Var).\]
Here
$\psi_{t}\calM$ is the nearby cycle of $\calM$ along $t=0$,
regarded as a mixed Hodge structure,
equipped with the semisimple part $T_s$ of its monodromy automorphism and the nilpotent morphism 
$N=\frac{-1}{2\pi\sqrt{-1}}\log T_u\colon \psi_{t}\calM\to \psi_{t}\calM(-1)$,
where $T_u$ is the unipotent part of this monodromy automorphism.
Moreover, $\phi_{t,1}\calM$ is the unipotent vanishing cycle of $\calM$ along $t=0$,
and we have the morphisms
$\Can \colon \psi_{t,1}\calM\to \phi_{t,1}\calM$ and 
$\Var \colon \phi_{t,1}\calM\to \psi_{t,1}\calM(-1)$,
    where $\psi_{t,1}\calM$ denotes the unipotent part of $\psi_{t}\calM$.
In \cite{SaiMon}, we introduced a category $\calG$ whose objects are
tuples of the form
$(\psi_{t}\calM,\phi_{t,1}\calM,\Can,\Var)$
and proved the following result.
\begin{prop}[Theorem~3.5 of \cite{SaiMon}]
There is an equivalence of categories
\[\MHM_{\mathrm{mon}}(\CC_t)\simto \calG,\]
which sends $\calM$ to $(\psi_{t}\calM,\phi_{t,1}\calM,\Can,\Var)$.
In particular, $\calM$ can be recovered from the tuple.
\end{prop}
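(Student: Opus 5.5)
The plan is to build the functor, check that it is fully faithful, and then check that it is essentially surjective. Throughout I use the well-known description of perverse sheaves and regular holonomic $D$-modules on a disc via nearby and vanishing cycles, together with Saito's theory of $V$-filtrations.

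\textbf{The functor.} For $\calM\in\MHM_{\mathrm{mon}}(\CC_t)$, Saito's theory makes $\psi_t\calM$ and $\phi_{t,1}\calM$ mixed Hodge structures. It also makes $N$, $T_s$, $\Can$ and $\Var$ morphisms of mixed Hodge structures, with the Tate twists as stated, and gives $\Var\circ\Can=N$ on $\psi_{t,1}\calM$ and $\Can\circ\Var=N$ on $\phi_{t,1}\calM$. So the tuple is an object of $\calG$, and the construction is clearly functorial.

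\textbf{Full faithfulness.} A monodromic $\calM$ on $\CC_t$ is determined by two pieces of data:
\begin{itemize}
\item its restriction to $\CS$, which is an admissible variation of mixed Hodge structure;
\item its gluing data at $0$.
\end{itemize}
The monodromic hypothesis is the key input. Because $K|_{\CS}$ is a shifted local system, the underlying $D$-module is determined by its $V$-filtration along $t=0$ and is generated by the graded pieces $\mathrm{Gr}_V^\alpha M$. The admissible variation on $\CS$ is then recovered from its limit mixed Hodge structure $\psi_t\calM$ with $N$ and $T_s$: this is the nilpotent orbit, and since the underlying $D$-module is monodromic the orbit is exact, not merely asymptotic. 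One can also make it explicit. The Hodge filtration is the one generated by $F^p\mathrm{Gr}_V^\alpha M$ for $-1\le\alpha<0$ (equivalently, $F$ is $\CS$-equivariant), and the weight filtration is the relative monodromy filtration.

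Beilinger's or Verdier's gluing then identifies $\calM$ with the data $(\psi_t\calM,\phi_{t,1}\calM,\Can,\Var)$ via an equivalence of abelian categories. Saito's compatibility of this gluing with mixed Hodge modules is MHM, Prop. 2.28 / Cor. 2.24. It follows that morphisms of $\calM$ correspond exactly to morphisms of tuples.

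\textbf{Essential surjectivity.} This is the main obstacle. Given an object of $\calG$, I would proceed in three steps.
\begin{enumerate}
\item Build the $D$-module explicitly as $\bigoplus_\alpha \mathrm{Gr}^\alpha$, with $t$ and $\partial_t$ acting through $\Can$, $\Var$ and $N$. Concretely, $M=\bigoplus_{\alpha}\CC[t,t^{-1}]\otimes\psi_{t,\lambda}$ for $\alpha\ne$ integer, and the quiver-glued unipotent part otherwise.
\item Give it the Hodge filtration $F^pM=\sum_k\partial_t^k F^{p+k}(\cdots)$, which mimics the Hodge filtration of a nilpotent orbit.
\item Give it the $\QQ$-structure from the perverse gluing.
\end{enumerate}
One then has to verify that the result is a mixed Hodge module. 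The natural route is Saito's criterion:
\begin{itemize}
\item the restriction to $\CS$ is an admissible variation of mixed Hodge structure, which holds by Kashiwara's and Cattani–Kaplan–Schmid's results on nilpotent orbits and the existence of the relative monodromy filtration;
\item the conditions on $V$-filtration compatibility, strict specializability, and the weight filtration on $\phi$, which must satisfy the relative monodromy condition.
\end{itemize}
I expect the hardest point to be checking the relative monodromy filtration condition on $\phi_{t,1}$ and the graded polarizability. For that I would reduce via the weight spectral structure to the pure case, using the decomposition into the pieces $\Ima\Can$ and $\Ker\Var$, where the statement is Saito's classification of pure Hodge modules on a curve.
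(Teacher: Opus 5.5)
The paper gives no proof of this statement (it is quoted from \cite{SaiMon}), so your proposal has to stand on its own. As written it has two genuine gaps.

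\textbf{Recovering $\calM|_{\CS}$ from $(\psi_t\calM,T_s,N)$.} You justify the exactness of the nilpotent orbit by ``since the underlying $D$-module is monodromic''. That is not an argument: monodromicity is a condition on the $D$-module alone (and is automatic once $K|_{\CS}$ is a local system), so it cannot constrain $F$.

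Here is a concrete counterexample. Take the constant variation on $\CC_t$ with underlying local system $\QQ\oplus\QQ(1)$, flat generators $e_0,e_1$, and $W_{-2}$ spanned by $e_1$, but let $F^0$ be the line spanned by $e_0+t\,e_1$. This is a graded-polarizable variation on all of $\CC_t$, hence an analytic mixed Hodge module with monodromic $D$-module $\mathcal{O}^2$. It has the same tuple as the split constant variation ($\phi_{t,1}=0$, $N=0$, limit $F^0=\langle e_0\rangle$). Yet it is not isomorphic to it, since a flat automorphism preserving $W$ cannot move $\langle e_0\rangle$ to $\langle e_0+te_1\rangle$ for all $t$. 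It is excluded only because it is not admissible at $\infty$, where $F^0$ tends to $W_{-2}$. So the proof must use algebraicity, i.e.\ admissibility at $\infty$, and your full-faithfulness step never does.

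One way to supply this input:
\begin{enumerate}
\item A polarizable variation of Hodge structure on $\CS$ with quasi-unipotent monodromy has finite monodromy (semisimplicity and $\pi_1(\CS)=\ZZ$), so $N$ vanishes on $\GR^W$ of $\calM|_{\CS}$.
\item After a base change $t=u^m$ making the monodromy unipotent, write the Hodge filtration in the Deligne frame. By admissibility at $0$ and at $\infty$ it extends to a holomorphic map from $\mathbb{P}^1$ into the set of filtrations inducing a fixed filtration on $\GR^W$.
\item That set is an orbit of a unipotent group, hence an affine space, so the map is constant and the orbit is exact.
\end{enumerate}
Alternatively, fullness follows from the theorem of the fixed part on $\CS$, which again uses the compactification.

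\textbf{The weight filtration.} The tuple records $W$ only on $\psi_t\calM$ and $\phi_{t,1}\calM$. On $\psi_t\calM$ it is the relative monodromy filtration $M(N,W)$ of the weight filtration of the variation, and this does not determine $W$ in general. Your phrase ``the weight filtration is the relative monodromy filtration'' conflates the two. In the essential surjectivity step you also never specify $W$ on the object you build, so Saito's criterion cannot even be applied.

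The missing point is that, since $N$ kills $\GR^W$, one has $M(N,W)=W$. Then $W_r\calM$ is the subobject corresponding to $(W_{r-1}\psi_t\calM,\,W_r\phi_{t,1}\calM)$ (Remark~\ref{ap20-1}). In the construction you should take this as the definition; it is legitimate because $c$, $v$, $N$, $T_s$ are morphisms of mixed Hodge structures.

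With this choice, the step you expected to be hardest is automatic:
\begin{enumerate}
\item $N$ on $U$ and $cv$ on $V$ send $W_r$ into $W_{r-2}$. So they vanish on graded pieces, and the relative monodromy filtrations exist and equal the given weight filtrations.
\item $c$ and $v$ induce zero on $\GR^W$. So $\GR^W_r$ of the glued object is $j_{!*}$ of a variation with finite monodromy $T_s$ and constant Hodge filtration (polarizable after averaging a polarization over $T_s$), plus a Hodge structure at $0$.
\item $\exp(zN)F$ induces a constant filtration on $\GR^W$. So it is a graded-polarizable variation on all of $\CS$, admissible at $0$ and $\infty$, with no need for Cattani--Kaplan--Schmid or Kashiwara.
\end{enumerate}

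A minor point: with the paper's normalization $N=-\frac{1}{2\pi\sqrt{-1}}\log T_u$, the axiom reads $vc=-N$, not $\Var\circ\Can=N$.
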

Using this description,
one can describe explicitly the Fourier-Laplace transform of monodromic mixed Hodge modules~\cite{SaiMon}.
Such transforms appear in several applications,
for example, in computations of irregular Hodge filtrations~\cite{SaiHodgefilt}, \cite{SabYu23},
the construction of Hodge microsheaves~\cite{KuSa},
and computations of Hodge structures of Fano schemes of intersections of two quadrics~\cite{Belmansetal}.
The explicit description of the Fourier–Laplace transform for monodromic mixed Hodge modules was later generalized to vector bundles of arbitrary rank in \cite{CD23} and \cite{SaiHodgefilt}.
See also \cite{Dirks24} for an application to GKZ systems.

In this paper, we aim to make the above proposition more explicit.
Namely,
we answer the question: what kind of objects does the category $\calG$ consist of?

We focus on the category
$\MHM_{\mathrm{mon},\mathrm{unip}}(\CC_t)$ of monodromic mixed Hodge modules on $\CC_t$ with unipotent monodromy.
For the case without eigenvalue $1$, see the final paragraph of this introduction.
By the above proposition,
the problem is reduced to classifying diagrams of the form
\begin{align}\label{june9-1}
    \xymatrix@C=40pt{\Vd\ar@<0.5ex>[r]^-{f,\ (-1)}&\Ud\ar@<0.5ex>[l]^-{g}},
\end{align}
where $\Vd$ and $\Ud$ are mixed Hodge structures,
$f\colon \Vd\to \Ud(-1)$, $g\colon \Ud\to \Vd$ are morphisms of mixed Hodge structures,
and the compositions
$fg$ and $gf$ are nilpotent.
We denote the corresponding mixed Hodge module by $\calM$.

Let us first consider the underlying perverse sheaves, which are unipotent and monodromic.
Here we say a perverse sheaf on $\CC$ is monodromic
if it has no singular points outside $0$,
and unipotent if the monodromy automorphism is unipotent.
A monodromic unipotent $\taik(=\QQ\ \text{or}\ \CC)$-perverse sheaf is described by a diagram:
\begin{align}\label{june9-2}
    \xymatrix@C=40pt{\Vd\ar@<0.5ex>[r]^-{f}&\Ud\ar@<0.5ex>[l]^-{g}},
\end{align}
where $\Vd$ and $\Ud$ are finite-dimensional $\taik$-vector spaces, $f$ and $g$ are linear maps,
and $fg$ and $gf$ are nilpotent.
The classification of such diagrams is known.
Let $\calL_i$ be a local system on $\CS$
whose monodromy has only one Jordan block of size $i$ with eigenvalue $1$.
For the inclusion $j\colon \CS\to \CC$,
we define four types of perverse sheaves as follows (see Definition~\ref{sep1-1}):
\begin{align*}
    A_i:= j_!\calL_i[1],\quad B_i:= \DR j_*\calL_i[1](=\DR\Gamma_{\CS}(j_{!}\calL_{i}[1])),\\ 
    C_i:=j_{!*}\calL_i[1],\quad D_i:=\cokaa(\taik_{\CC}[1]\to \DR j_*\calL_{i+1}[1]).
\end{align*}
Here $D_i$ is defined for $i\geq 0$,
whereas the other three types are defined for $i\geq 1$,
and $j_{!*}$ denotes the minimal extension functor.
For $i=2$, the diagrams of the form (\ref{june9-2}) corresponding to the perverse sheaves $A_2$, $B_2$, $C_2$ and $D_2$ are as follows.
\begin{center}

\begin{minipage}{0.24\textwidth}
\centering
\parbox[c][3.0cm][c]{\linewidth}{
\[
\xymatrix@C=30pt@R=3pt{
&\bullet\ar[ld]\\
\bullet\ar[dr]&\\
&\bullet\ar[ld]\\
\bullet\ar@{.>}[dr]&\\
&0\\
}
\]
}
\par
{\small Type $A_2$}
\end{minipage}
\hfill
\begin{minipage}{0.24\textwidth}
\centering
\parbox[c][3.0cm][c]{\linewidth}{
\[
\xymatrix@C=30pt@R=3pt{
\bullet\ar[dr]&\\
&\bullet\ar[ld]\\
\bullet\ar[dr]&\\
&\bullet\ar@{.>}[ld]\\
0&\\
}
\]
}
{\small Type $B_2$}
\end{minipage}
\hfill
\begin{minipage}{0.24\textwidth}
\centering
\parbox[c][3.0cm][c]{\linewidth}{
\[
\xymatrix@C=30pt@R=1pt{
&\bullet\ar[ld]\\
\bullet\ar[dr]&\\
&\bullet\ar@{.>}[ld]\\
0&
}
\]
}
{\small Type $C_2$}
\end{minipage}
\hfill
\begin{minipage}{0.24\textwidth}
\centering
\parbox[c][3.0cm][c]{\linewidth}{
\[
\xymatrix@C=30pt@R=1pt{
\bullet\ar[dr]&\\
&\bullet\ar[ld]\\
\bullet\ar[dr]&\\
&\bullet\ar[dl]\\
\bullet\ar@{.>}[dr]&\\
&0
}
\]
}
{\small Type $D_2$}
\end{minipage}
\end{center}
Here, each dot represents $\taik$,
and the direct sums of the copies of $\taik$ in the left and right columns are $V$ and $U$, respectively.
Each solid arrow is the identity map, and each dotted arrow is the zero map.

The following proposition follows from the classification of finite-dimensional nilpotent representations of the cyclic quiver with two vertices.
See Remark~\ref{aug5-11}.
\begin{prop}
Let $K$ be a unipotent and monodromic perverse sheaf on $\CC_t$.
Then, $K$ is isomorphic to a finite direct sum of perverse sheaves of the form $X_i$ for some $X\in \{A,B,C,D\}$.
\end{prop}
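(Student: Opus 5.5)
By the equivalence between unipotent monodromic perverse sheaves and diagrams \eqref{june9-2}, it suffices to show that every such diagram decomposes into a finite direct sum of the diagrams attached to $A_i$, $B_i$, $C_i$ and $D_i$. We regard a diagram \eqref{june9-2} as a finite-dimensional representation of the cyclic quiver $Q$ with two vertices $v$ (carrying $\Vd$) and $u$ (carrying $\Ud$), with arrows $f\colon v\to u$ and $g\colon u\to v$. The condition that $fg$ and $gf$ are nilpotent says exactly that the representation is nilpotent, i.e.\ long enough paths act by zero.

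The first step is the structure theory of nilpotent representations of $Q$. The category of nilpotent representations of the cyclic quiver $\widetilde{A}_1$ is a uniserial length category. Its indecomposables are classified by a starting vertex and a length $\ell\geq 1$: each is a single string
\[
x_1\mapsto x_2\mapsto\cdots\mapsto x_\ell\mapsto 0,
\]
with the arrows alternating between $f$ and $g$. To see this we would argue by Krull--Schmidt. Take a vector $x$ of maximal nilpotency length, i.e.\ such that the path of maximal length not killing $x$ is as long as possible. The string it spans is then an injective (equivalently, projective) object in the category of representations killed by paths of that length. That category is the module category over the truncated path algebra $\taik Q/J^{m}$, which is self-injective Nakayama. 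Hence the string splits off as a direct summand, and induction on dimension gives the decomposition.

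The second step matches strings to the four types by comparing with the pictures computed above. A string is determined by its start column ($\Vd$ or $\Ud$) and its end column, equivalently by its start column and the parity of $\ell$:
\begin{itemize}[label={}]
\item start in $\Ud$, end in $\Vd$, $\ell=2i$: type $A_i$;
\item start in $\Vd$, end in $\Ud$, $\ell=2i$: type $B_i$;
\item start and end in $\Ud$, $\ell=2i-1$: type $C_i$;
\item start and end in $\Vd$, $\ell=2i+1$: type $D_i$.
\end{itemize}
The degenerate case $D_0$ is the single string $\taik$ in $\Vd$ with $\Ud=0$, namely the skyscraper. For this matching to be valid, one must verify for each family that the diagram attached to the stated perverse sheaf really is this string. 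This is done by computing $\psi$ and $\phi$ of $j_!\calL_i[1]$, $\DR j_*\calL_i[1]$, $j_{!*}\calL_i[1]$ and the cokernel defining $D_i$. For example, for $j_!$ the map $\Var$ is injective with image of codimension one and $\Can$ is surjective, as in the $i=2$ picture.

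The main obstacle is the first step, that is, the splitting argument, or equivalently the citation of the classification of nilpotent representations of $\widetilde{A}_1$. The matching in the second step is routine once the general-$i$ pictures are recorded, as the paper does in Definition~\ref{sep1-1} and Remark~\ref{aug5-11}.
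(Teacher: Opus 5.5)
Your proof is correct, but it follows a different route from the paper. You take a vector $x$ (which must lie in $V$ or in $U$) with $J^{m-1}x\neq 0$, where $J^m$ kills the representation. Self-injectivity of the Nakayama algebra $\mathbf{k}Q/J^m$ then makes the string generated by $x$ a projective-injective direct summand, and induction on dimension finishes. Your table (lengths $2i$, $2i$, $2i-1$, $2i+1$ with the stated start and end columns) agrees with the diagrams in Definition~\ref{sep1-1}. This is essentially the citation route the paper points to in Remark~\ref{aug5-11}.

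The paper argues differently, by explicit linear algebra:
\begin{itemize}
\item it introduces the subquotients $\wt{V_i},\wt{V_i'},\wt{U_i},\wt{U_i'}$ and the multiplicity spaces $V_i,V_i',U_i,U_i'$ (Definition~\ref{mar4-3});
\item it proves the exact sequences of Lemma~\ref{jan7-2};
\item it modifies arbitrary sections by descending induction so that they commute with $f$ and $g$ (Lemma~\ref{mar4-8});
\item this yields Corollary~\ref{ap8-4}, where $\dim U_i'$, $\dim V_i'$, $\dim U_{i-1}$, $\dim V_i$ are the multiplicities of $A_i,B_i,C_i,D_i$.
\end{itemize}
Your argument is shorter and more conceptual. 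The paper's version buys canonical, functorial multiplicity spaces, which automatically carry mixed Hodge structures. All its splittings can be chosen in the semisimple category of polarizable pure Hodge structures and compatibly with weight filtrations. This is exactly what Theorem~\ref{may15-5} and Lemma~\ref{may8-6} reuse. Peeling off one vector at a time does not transfer directly, because a single vector does not span a sub-Hodge structure. You would have to split off a whole block $P_v\otimes H$, with $H$ a Hodge-theoretic complement of the kernel of the longest path, which amounts to rebuilding the paper's $V_i$, $U_i'$, etc.

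There is one factual slip in your second step. For $j_!\calL_i[1]$ (type $A_i$), the stalk at $0$ vanishes, so $\Can$ is an isomorphism. Meanwhile $\Var$ has a one-dimensional kernel spanned by $gN^{i-1}x$; in the $i=2$ picture, $f(gNx)=0$. The description ``$\Var$ injective with image of codimension one, $\Can$ surjective'' is that of $j_{!*}\calL_i[1]$, i.e.\ type $C_i$. Your table is unaffected, but the check you call routine should use the correct invariants:
\begin{itemize}
\item $\Can$ is an isomorphism for $A_i$;
\item $\Var$ is an isomorphism for $B_i=\DR j_*\calL_i[1]$;
\item $\Can$ is onto and $\Var$ is injective for $C_i$;
\item $\Can$ is injective and $\Var$ is onto for $D_i$, with $\dim\phi=i+1$ and $\dim\psi=i$ by exactness of $\psi$ and $\phi$.
\end{itemize}
The paper records this identification in Proposition~\ref{mar3-2} as following from the definitions, so your second step is at the same level of detail as the paper's.
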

Although this result is not new, we give a proof of this proposition in Section~\ref{sectionperv}
because we will later need a similar argument for mixed Hodge structures.

Let $\QQ^H$ be the trivial polarizable pure Hodge structure of weight $0$,
whose Hodge filtration jumps at $0$.
For $i\geq 1$, we define a mixed Hodge structure $K_i$ by
\[K_i:=\QQ^H(i-1)\oplus \QQ^H(i-2)\oplus \cdots \oplus \QQ^H,\]
equipped with the nilpotent morphism
$N_i\colon K_i\to K_i(-1)$ 
whose restriction $\QQ^H(j)\to \QQ^H(j+1)(-1)$ is minus the identity for $0\leq j\leq i-2$,
and whose restriction to $\QQ^H(i-1)$ is zero.
Set $K_{0}=0$.
Let $\calL^H_i$ be the admissible graded-polarizable variation of mixed Hodge structure on $\CS$ whose underlying local system is $\calL_i$
and whose nearby cycle along $t=0$ is $K_i$ with the nilpotent morphism $N_i$.
Its weight filtration is the monodromy weight filtration of $\calL_i$ centered at $1-i$,
and its Hodge filtration is given by the nilpotent orbit determined by $F^\bullet K_i$ and $N_i$.
For an explicit construction of it, see \cite[Proposition~3.9 and Remark~3.10]{SaiMon}.

Taking the pair $(V,U)$ in the diagram (\ref{june9-1}) to be 
\[(K_i,K_i),\quad (K_i(-1),K_i),\quad (K_{i-1},K_i),\quad \mbox{and}\quad (K_{i+1}(-1),K_i)\]
for the types $A_i$, $B_i$, $C_i$ and $D_i$, respectively,
we obtain natural mixed Hodge module structures on these perverse sheaves.
We denote them by 
\begin{align}\label{june9-4}
    A_i^H,\quad B_i^H,\quad C_i^H,\quad D_i^H.
\end{align}
Then, for $i\geq 1$ and $X\in \{A,B,C,D\}$, we have
\[\psi_{t}X^H_i\simeq K_i,\quad \text{and}\quad X^H_i|_{\CS}\simeq \calL^H_i[1].\]

The diagram (\ref{june9-1}) induces another diagram:
\begin{align}\label{june9-3}
    \xymatrix@C=40pt{\GR^W_\bullet \Vd\ar@<0.5ex>[r]^-{\GR^W_\bullet f,\ (-1)}&\GR^W_\bullet \Ud\ar@<0.5ex>[l]^-{\GR^W_\bullet g}},
\end{align}
where $\GR^W_\bullet \Vd$ and $\GR^W_\bullet \Ud$ are 
the associated graded objects with respect to the weight filtrations and 
$\GR^W_\bullet f$ and $\GR^W_\bullet g$ denote the induced morphisms.
We denote by $G(\calM)$ the mixed Hodge module corresponding to (\ref{june9-3}).
\begin{thm}[Theorem~\ref{may15-5}]
There exists a decomposition:
\begin{align}\label{june9-5}
    G(\calM)\simeq \bigoplus_{s=1}^\ell (X^s)^H_{i(s)}\otimes H^s,
\end{align}
where $X^s\in \{A,B,C,D\}$ and 
$i(s)\in \ZZ_{\geq 0}$, with $i(s)\geq 1$ unless $X^s=D$,
$(X^s)^H_{i(s)}$ is one of the mixed Hodge modules in (\ref{june9-4}), and $H^s$ is a pure Hodge structure. 
\end{thm}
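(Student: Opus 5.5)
The plan is to work entirely with the graded diagram (\ref{june9-3}) in the category of graded-polarizable mixed Hodge structures. Set $V_k:=\GR^W_k V$ and $U_k:=\GR^W_k U$. Each of these is a polarizable pure Hodge structure of weight $k$. The map $\GR^W f$ sends $V_k$ to $U_{k+2}(-1)$, i.e.\ it is a morphism of pure Hodge structures $V_k\to U_{k+2}(-1)$, both sides of weight $k$. The map $\GR^W g$ sends $U_k$ to $V_k$.

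Forgetting the Hodge structures, this is a nilpotent representation of the cyclic quiver with two vertices, and it carries a compatible grading. The classification from Section~\ref{sectionperv} (Remark~\ref{aug5-11}) decomposes such a representation into strings, one for each type $A_i,B_i,C_i,D_i$. I will redo that argument in a way that only uses kernels, images and complements, all of which make sense in the semisimple abelian category of polarizable pure Hodge structures.

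\textbf{Semisimplicity.} The category of polarizable pure Hodge structures of a fixed weight is semisimple. Hence every subobject built from kernels and images of the maps $f,g$ and their iterated compositions has a complement in that category. Weights are preserved up to the fixed shifts: a $\QQ^H(m)$ in $V$ or $U$ sits in the weight determined by its position in the string. So it suffices to treat each fixed weight slot separately.

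\textbf{Peeling off strings.} I will use the standard induction for nilpotent cyclic-quiver representations.
\begin{enumerate}
\item Let $n$ be the maximal length of a nonzero path (an alternating composition of $f$'s and $g$'s starting at $V$ or at $U$).
\item Pick the starting vertex $P\in\{V,U\}$ and the path $\pi$ of length $n$ with $\pi\neq0$.
\item Choose a Hodge-theoretic complement $H$ of $\ker\pi$ inside $P$, working weight by weight so that $H$ is pure.
\item The images of $H$ under the successive partial compositions form a sub-diagram. It is isomorphic to $X^H_i\otimes H$, where the type $X$ and the index $i$ are read off from the start vertex and the parity of $n$. The Tate twists in $K_i$ match the weight shifts coming from $f$ (twist $(-1)$) and from $g$ (no twist).
\item This sub-diagram is injective-type, since the longest path is injective on $H$. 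So it splits off as a direct summand. The splitting can be constructed by choosing complements of the images at each vertex compatibly, using the standard dual argument (a retraction built from a complement of the image of $\pi$ at the end vertex), performed in the semisimple category.
\item Induct on the total dimension.
\end{enumerate}

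\textbf{Matching with the given models.} I must check that the Hodge structure on the summand coincides with the models $X^H_i$ of (\ref{june9-4}). Here $(V,U)$ equals $(K_i,K_i)$, $(K_i(-1),K_i)$, and so on, with the identity maps up to sign. This holds because each map restricted to the summand is an isomorphism of pure Hodge structures onto its image. Consequently every slot is identified with $H$ twisted by the appropriate $\QQ^H(m)$, and the signs in $N_i$ can be absorbed by rescaling the basis by $\pm1$.

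\textbf{Main obstacle.} The step I expect to be hardest is constructing the complement sub-diagram, i.e.\ a sub-diagram stable under both $f$ and $g$, not merely vector-space complements. I would first try the classical trick: choose a splitting $\rho\colon \text{end vertex}\to H$ of $\pi|_H$ in the semisimple category, then define the complement as the common kernel of $\rho\circ(\text{path})$ over all partial paths. Each of these maps is a morphism of Hodge structures, so this kernel is automatically a sub-mixed Hodge structure and is stable under $f$ and $g$.

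Finally, under the equivalence $\MHM_{\mathrm{mon},\mathrm{unip}}(\CC_t)\simeq\calG$, this decomposition of diagrams becomes the decomposition (\ref{june9-5}) of $G(\calM)$, since the equivalence is additive and compatible with tensoring by Hodge structures.
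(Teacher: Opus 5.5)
Your proposal is correct, but it takes a different route from the paper.

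\textbf{What the paper does.} After replacing $\calM$ by $G(\calM)$, the paper notes that the canonical subquotients $\wt{V_i},\ldots,U_i'$ of Definition~\ref{mar4-3} carry split mixed Hodge structures. Since short exact sequences of polarizable pure Hodge structures split, the whole machinery of Section~\ref{sectionperv} lifts verbatim. This includes Lemma~\ref{jan7-2} through Corollary~\ref{ap8-4} and the inductively corrected sections of Lemma~\ref{mar4-8}. Each resulting summand, e.g.\ $\bigoplus_r N_U^r(U_i')$, is then recognized as $X_i^H\otimes(\text{pure})$.

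\textbf{What you do instead.} You peel off one maximal string at a time, using that a string of maximal length $n+1$ is projective-injective. The complement is the common kernel $C$ of the maps $\rho\circ\sigma$, where $\sigma$ ranges over the paths (of length $\le n$, suitably Tate-twisted) from a vertex $Y$ to the end vertex $E$. This works, but you should write out two checks.
\begin{enumerate}
\item $C\cap S=0$. For $s=\sum_m\pi_m(h_m)$ with $m_0$ minimal such that $h_{m_0}\neq 0$, apply the path of length $n-m_0$ and then $\rho$; this returns $h_{m_0}$, because all paths of length $>n$ vanish.
\item $C+S$ is everything. Paths of length $\le n$ from $Y$ to $E$ are in bijection with the string nodes lying at $Y$. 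Hence $C_Y$ is cut out by $\dim S_Y$ linear conditions, and a dimension count finishes.
\end{enumerate}
Also make explicit that split graded-polarizable mixed Hodge structures form a semisimple category closed under subobjects. In it, kernels and images of morphisms are computed weight by weight, since there are no nonzero morphisms between pure Hodge structures of different weights. This is what makes both your complement $H$ and $C$ legitimate sub-objects.

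\textbf{Comparison.} Your route is shorter and self-contained, and it avoids the compatible-section bookkeeping of Lemma~\ref{mar4-8}. The paper's route gives canonical multiplicity spaces, namely the intrinsic subquotients $V_i,V_i',U_i,U_i'$. More importantly, exactly these subquotients and their splittings are reused for the non-graded diagram in Lemma~\ref{may8-6}, Corollary~\ref{may12-1} and Lemma~\ref{may8-2}. That reuse is why the paper reproves the classical classification (Remark~\ref{aug5-11}).

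\textbf{Two small slips.}
\begin{enumerate}
\item $\GR^W f$ maps $\GR^W_kV$ to $(\GR^W_{k-2}U)(-1)$, not to $U_{k+2}(-1)$; cf.\ (\ref{may15-7}). Only with $k-2$ do both sides have weight $k$.
\item When the string starts in $V$ (types $B$ and $D$), the models $B_i^H$ and $D_i^H$ begin with $\QQ^H(-1)$. So the pure Hodge structure in (\ref{june9-5}) must be taken to be $H(1)$, not the chosen complement $H$ itself.
\end{enumerate}
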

Starting from this decomposition of $G(\calM)$,
we study how the original object $\calM$ is obtained.
We call a mixed Hodge module equipped with the chosen decomposition of the form appearing on the right-hand side of (\ref{june9-5}) a split graded model.

It is well known that
extensions of mixed Hodge structures with two graded pieces are classified as follows.
Let $H'$ and $H''$ be pure Hodge structures of weights $w'$ and $w''$,
respectively, with $w'<w''$.
Then mixed Hodge structures $H$ fitting into exact sequences of the form
\[0\to H'\to H\to H''\to 0\]
are classified by
\[\Hom_{\CC}(H'',H')/\left(\Hom_{F}(H'',H')+\Hom_{\QQ}(H'',H')\right),\]
where 
$\Hom_{\CC}(H'',H')$ is the space of $\CC$-linear maps between the underlying $\CC$-vector spaces,
and $\Hom_{F}(H'',H')$ (resp. $\Hom_{\QQ}(H'',H')$) is the subgroup of 
$\Hom_{\CC}(H'',H')$
consisting of maps preserving the Hodge filtration (resp. the $\QQ$-structure).

This description can be generalized to the case of more than two graded pieces.
Let $\Lambda\subset \ZZ$ be a finite subset.
Let $\{L_w\}_{w\in \Lambda}$ be a family of pure Hodge structures,
where $L_w$ is of weight $w$.
We denote by $L_{w,\CC}$ the underlying $\CC$-vector space and by $L_{w,\CC}^{p,q}$ the $(p,q)$-part of the Hodge decomposition of $L_w$ for $p,q\in \ZZ$ with $p+q=w$.
We denote by $\Exten{\{L_w\}_{w\in \Lambda}}$ the set of isomorphism classes
of pairs $(H,\{\alpha_w\}_{w\in \Lambda})$,
where $H$ is a mixed Hodge structure and $\GR^W_wH=0$ for $w\notin \Lambda$,
and $\alpha_w\colon L_w\simto \GR^W_{w}H$ is an isomorphism of pure Hodge structures.
We define $\UpT:=\UpT(\{L_w\}_{w\in \Lambda})$ to be the group of matrices
 $(m_{ij})_{i,j\in \Lambda}$
such that
$m_{ij}\in \Hom_{\CC}(L_{j,\CC},L_{i,\CC})$,
$m_{ii}=\iden$ for $i\in \Lambda$ 
and 
$m_{ij}=0$ for $i>j$.
Thus matrices in $\UpT$ are upper triangular with respect to the usual order on $\Lambda$.
We also define subgroups $\UpT_F$ and $\UpT_\QQ$
consisting of matrices whose components preserve the Hodge filtration and the underlying $\QQ$-structure, respectively.

\begin{prop}[Proposition~\ref{jun16-2} and Lemma~\ref{may7-1}]\label{june9-8}
There is a bijection:
    \[\Exten{\{L_w\}_{w\in \Lambda}}\simeq \UpT_\QQ\backslash \UpT/ \UpT_F.\]
Moreover, for a class in $\UpT_\QQ\backslash \UpT/ \UpT_F$,
we can always choose a representative $M=(m_{ij})\in \UpT$ satisfying the following condition:
\[m_{ij}(L_{j,\CC}^{p,q})\subset \bigoplus_{\substack{p'<p\\p'+q'=i}}L_{i,\CC}^{p',q'},\]
which we will call condition $(*)$. 
\end{prop}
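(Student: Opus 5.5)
The plan is to prove the bijection by splitting the weight filtration of the complex vector space underlying $H$ in two ways: once compatibly with the $\QQ$-structure and once compatibly with the Hodge filtration. The upper triangular matrix comparing the two splittings will be the invariant. Concretely, given $(H,\{\alpha_w\})$, the weight filtration $W_\bullet H_\QQ$ is defined over $\QQ$, so we choose a $\QQ$-splitting $\sigma_\QQ\colon \bigoplus_w L_{w,\QQ}\simto H_\QQ$ with $\sigma_\QQ(L_{w})\subset W_wH$ lifting $\alpha_w$. We also choose a $\CC$-splitting $\sigma_F\colon \bigoplus_w L_{w,\CC}\simto H_\CC$, again lifting the $\alpha_w$, which is strictly compatible with $F$: it sends $F^p\bigoplus L_{w,\CC}$ onto $F^pH_\CC$. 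Such a splitting exists because $F$ induces the Hodge filtration on each $\GR^W_w$ and morphisms of mixed Hodge structures are strict, so filtered lifts can be built by choosing a basis adapted to $F$ in each graded piece and lifting it into $F^p\cap W_w$. We set $M:=\sigma_\QQ^{-1}\sigma_F\in\UpT$; it is unipotent upper triangular because both maps lift the $\alpha_w$ and respect $W$.

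The next step checks independence of choices. Changing $\sigma_\QQ$ multiplies $M$ on the left by an element of $\UpT_\QQ$. Changing $\sigma_F$ multiplies $M$ on the right by an element of $\UpT_F$, namely a filtered unipotent automorphism of $\bigoplus L_{w,\CC}$. An isomorphism of pairs $(H,\alpha)\simeq(H',\alpha')$ transports both splittings. So the class in $\UpT_\QQ\backslash\UpT/\UpT_F$ is well defined.

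For the inverse map, given $M$, put $H_\QQ:=\bigoplus L_{w,\QQ}$ with $W$ the obvious filtration, and $F^pH_\CC:=M(F^p\bigoplus L_{w,\CC})$. Then $\GR^W_w$ recovers $L_w$ with its Hodge filtration, because $M$ is unipotent upper triangular. Hence $H$ is a mixed Hodge structure: by definition one only needs the graded pieces to be pure, and graded-polarizability is inherited from the $L_w$. The two constructions are inverse by design: for the constructed $H$ one can take $\sigma_\QQ=\iden$ and $\sigma_F=M$.

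The main obstacle is condition $(*)$, where we must show that a representative can be chosen with each $m_{ij}$ mapping $L^{p,q}_{j,\CC}$ into the part of $L_{i,\CC}$ of strictly lower $p$. Here I would use Deligne's splitting. The subspaces $I^{p,q}$ of $H$ give a canonical $\CC$-splitting of $W$ compatible with $F$, and $I^{p,q}\equiv \ov{I^{q,p}}$ modulo $\bigoplus_{p'<p,q'<q}I^{p',q'}$. Take $\sigma_F$ to send $L^{p,q}_{w,\CC}$ isomorphically onto $I^{p,q}$, which is filtered-compatible. Take $\sigma_\QQ$ to be a real (indeed rational) splitting. The entries of $M$ then measure the failure of $I^{p,q}$ to be defined over $\QQ$. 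To force the $p'<p$ shape I would instead adjust by an element of $\UpT_F$. Any $m_{ij}$ decomposes by bidegree into components $L^{p,q}_j\to L^{p',q'}_i$; those with $p'\ge p$ preserve $F$. I would kill these by right multiplication with $\UpT_F$, working inductively on $j-i$, i.e.\ superdiagonal by superdiagonal, since the correction at level $j-i$ only disturbs entries of larger distance. The only delicate point is to check that the induced changes land in higher superdiagonals and therefore do not spoil the components already normalized; the triangular structure guarantees this.
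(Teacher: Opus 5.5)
Your proposal is correct. For the bijection you take a different route from the paper; for condition $(*)$ your argument is the paper's.

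The paper proves Proposition~\ref{jun16-2} by induction on $|\Lambda|$: it splits off the top weight $m=\max\Lambda$ and applies the two-step classification of Proposition~\ref{aug19-1} at each stage. You instead attach to $(H,\{\alpha_w\})$ the comparison matrix $M=\sigma_\QQ^{-1}\sigma_F$ between a rational splitting of $W$ and an $F$-compatible complex splitting. The double coset $\UpT_\QQ\backslash\UpT/\UpT_F$ then appears directly as the ambiguity in these two choices, and $\sigma_\QQ$ itself gives the isomorphism $H\simeq H_M$.

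Your version is self-contained. It also avoids bookkeeping that the paper's one-line inductive proof leaves implicit: isomorphisms of pairs need not be the identity on $W_{m-1}H$. So one must track how automorphisms of $W_{m-1}H$ inducing the identity on graded pieces act on the extension classes. The paper's version has the merit of presenting the result as an iterate of the classical $\mathrm{Ext}^1$ description.

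One small correction: the existence of $\sigma_F$ does not rely on strictness of morphisms of mixed Hodge structures. It follows from the definition $F^p\GR^W_wH_\CC=(F^p\cap W_wH_\CC+W_{w-1}H_\CC)/W_{w-1}H_\CC$: lift an $F$-adapted basis of each $L_{w,\CC}$ into $F^p\cap W_w$. Surjectivity onto $F^pH_\CC$ then follows by induction on $w$. Alternatively, it follows from $\dim F^pH_\CC=\sum_w\dim F^p\GR^W_wH_\CC$, which holds for any two finite filtrations.

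For condition $(*)$, your final argument is exactly the paper's proof of Lemma~\ref{may7-1}. You split each $m_{ij}$ into its $F$-preserving part, namely the components $L^{p,q}_{j,\CC}\to L^{p',q'}_{i,\CC}$ with $p'\geq p$, and the remainder. You then remove the former by right multiplication with an element of $\UpT_F$, one superdiagonal at a time. The Deligne-splitting detour plays no role and can be dropped.
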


For $M=(m_{ij})\in \UpT$,
the mixed Hodge structure $H_M$ corresponding to the class of $M$ under the bijection in Proposition~\ref{june9-8} is constructed as follows.
The underlying $\CC$-vector space $H_{M,\CC}$,
the $\QQ$-vector space $H_{M,\QQ}$,
and the weight filtration $W_\bullet H_{M,\QQ}$ are the same as those of the mixed Hodge structure $\bigoplus_{w\in \Lambda}L_w$,
obtained as the direct sum of pure Hodge structures.
The Hodge filtration of $H_M$ is defined by
\begin{align}\label{june10-1}
    F^pH_{M,\CC}=M\left(\bigoplus_{w\in \Lambda}F^pL_{w,\CC}\right),
\end{align}
where
$M$ is regarded as a $\CC$-linear automorphism of $\bigoplus_{w\in \Lambda}L_{w,\CC}$.

The $\Vd$-part and the $\Ud$-part of the diagram corresponding to 
the right-hand side of (\ref{june9-5})
are of the form:
\begin{align}
    &\bigoplus_{s=1}^\ell\bigoplus_{j\in J_\Vd(s)}H^s(j)\quad \text{and}\label{june9-6}\\
    &\bigoplus_{s=1}^\ell \bigoplus_{j\in J_{\Ud}(s)}H^s(j),\label{june9-7}
\end{align}
respectively, where $J_\Vd(s)$ and $J_{\Ud}(s)$ are certain finite subsets of $\ZZ$.
We define $L_{\Vd,w}$ (resp. $L_{\Ud,w}$)
to be the direct sum of the summands $H^s(j)$ in (\ref{june9-6}) (resp. (\ref{june9-7}))
whose weight is $w$.
Then, by Proposition~\ref{june9-8},
the original $\Vd$ and $\Ud$ can be described by
upper triangular matrices $M_\Vd\in \UpT_\Vd=\UpT(\{L_{\Vd,w}\}_{w})$ and $M_\Ud\in \UpT_\Ud=\UpT(\{L_{\Ud,w}\}_{w})$.
Thus, we obtain a triple
\[(G(\calM),M_\Vd,M_\Ud).\]

By extracting the properties satisfied by such triples,
we define a category $\calC$.
Its objects are triples $(G,M_\Vd,M_\Ud)$, where $G$ is a split graded model,
$M_\Vd$ and $M_\Ud$ are matrices associated with the $\Vd$-part and the $\Ud$-part of the diagram of $G$, respectively,
satisfying compatibility conditions with the morphisms between the $\Vd$-part and the $\Ud$-part; see Definition~\ref{may18-5}.

\begin{prop}[Proposition~\ref{aug4-1-prop}]
The category $\calC$ is equivalent to $\MHM_{\mathrm{mon},\mathrm{unip}}(\CC_t)$.
\end{prop}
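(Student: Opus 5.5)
The plan is to factor the equivalence through the category $\calG$ (restricted to unipotent monodromy) from Theorem~3.5 of \cite{SaiMon}. Concretely, I will show that $\calC$ is equivalent to the full subcategory $\calG_{\mathrm{unip}}$ of diagrams (\ref{june9-1}). I will do this by constructing a functor $\Phi\colon \calC\to\calG_{\mathrm{unip}}$ and checking that it is essentially surjective and fully faithful.

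On objects, $\Phi$ sends $(G,M_\Vd,M_\Ud)$ to the following diagram. The $\QQ$-spaces, $\CC$-spaces and weight filtrations are those of the $\Vd$- and $\Ud$-parts of $G$, namely (\ref{june9-6}) and (\ref{june9-7}). The Hodge filtrations are twisted by $M_\Vd$ and $M_\Ud$ via (\ref{june10-1}). The maps $f$ and $g$ are taken to be the same $\QQ$-linear maps as in $G$. The compatibility conditions in Definition~\ref{may18-5} should be exactly what is needed for $f$ and $g$ to preserve the twisted Hodge filtrations, that is, $f M_\Vd(F^p)\subset M_\Ud(F^{p-1})$ and $g M_\Ud(F^p)\subset M_\Vd(F^p)$.

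By Proposition~\ref{jun16-2}, $H_{M_\Vd}$ and $H_{M_\Ud}$ are mixed Hodge structures. Morphisms of $\QQ$-spaces compatible with $W$ and $F$ are automatically strict, hence are morphisms of MHS. Nilpotency of $fg$ and $gf$ is inherited from $G$, since the underlying linear maps are unchanged. On morphisms, I take a morphism in $\calC$ to be a pair of $\QQ$-linear, $W$-preserving maps compatible with $f$ and $g$ and with the twisted Hodge filtrations. With that definition, $\Phi$ is fully faithful essentially by construction.

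For essential surjectivity, I start from $\calM$ and its diagram (\ref{june9-1}), and I apply Theorem~\ref{may15-5} to obtain the split graded model $G(\calM)$. The key step is to lift the graded identification to a filtered one. Choose $\QQ$-splittings of the weight filtrations of $\Vd$ and $\Ud$, i.e.\ isomorphisms of filtered $\QQ$-spaces $\Vd_\QQ\simeq\bigoplus_w L_{\Vd,w,\QQ}$ lifting the isomorphisms $\GR^W\simeq$ (\ref{june9-6}), and similarly for $\Ud$.

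The difficulty is that $f$ and $g$ must become exactly the graded maps $\GR^W f$ and $\GR^W g$ under these splittings. They are only filtered maps, so they may have strictly weight-lowering components. My plan is to use the fact that a diagram of $\QQ$-vector spaces with filtrations, for which $f$ and $g$ are strict (morphisms of MHS are strict for $W$), is isomorphic as a filtered diagram to its associated graded. This holds if the diagram category of the quiver (\ref{june9-2}) with compatible filtrations admits splittings. I would prove it by induction on the weight length, using the Deligne splitting over $\CC$ to get the desired $\CC$-splitting. Over $\QQ$, I would instead allow the discrepancy to be absorbed into the matrices $M_\Vd$ and $M_\Ud$.

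Indeed, I will take the splittings over $\CC$ and record the defect. Proposition~\ref{june9-8} says the classes live in $\UpT_\QQ\backslash\UpT/\UpT_F$, so I would define $M_\Vd$ and $M_\Ud$ via $\CC$-splittings that intertwine $f$ and $g$ with $\GR^W f$ and $\GR^W g$. This is precisely the point where the unchanged $\QQ$-structure in $\Phi$ matters, and I expect it to be the main obstacle: I must reconcile keeping the $\QQ$-structure of $G$ with making $f$ and $g$ graded.

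I would try first to show that the $W$-filtered $\QQ$-representation of the quiver is split. Strictness of $f$ and $g$ with respect to $W$ makes $\GR^W$ exact, and a filtered representation whose $\GR$ is a direct sum of indecomposables (\ref{june9-2}) can be split inductively, since $\mathrm{Ext}^1$ between graded pieces vanishes in the filtered strict setting. If that fails, I will fall back on having Definition~\ref{may18-5} allow $f$ and $g$ to carry lower-triangular $\QQ$-corrections, and I will verify the conditions accordingly. Once the splitting is in hand, $M_\Vd$ and $M_\Ud$ are read off from (\ref{june10-1}). Composing $\Phi$ with the equivalence of \cite{SaiMon} then gives the claim.
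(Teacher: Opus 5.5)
Your architecture matches the paper's: the functor twists the Hodge filtrations of $G$ by $M_V,M_U$, full faithfulness holds by the definition of morphisms in $\calC$, and essential surjectivity comes from transporting an arbitrary diagram onto its associated graded. The key step of essential surjectivity, however, has a genuine gap, exactly where you flagged it.

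\textbf{The splitting step.} You need filtered $\QQ$-linear isomorphisms $\eta_V\colon V\to\GR^W_\bullet V$ and $\eta_U\colon U\to\GR^W_\bullet U$ that induce the identity on $\GR^W$ and intertwine $f,g$ with $\GR^W f,\GR^W g$ (Lemma~\ref{may8-2}). Your justification is that strictness of $f$ and $g$ alone, plus a vanishing of $\mathrm{Ext}^1$, makes the filtered quiver representation split. That is false. Take $V=\langle v_1,v_2\rangle$ and $U=\langle u_1,u_2\rangle$, with $f(v_1)=u_1$, $f(v_2)=0$ and $g(u_1)=g(u_2)=v_2$. Put $v_1$ in weight $a$, $u_1$ in weight $a-2$, and $v_2,u_2$ in weight $c<a-2$. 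Concretely, $W_rV=\langle v_2\rangle$ for $c\le r<a$ and $W_rU=\langle u_2\rangle$ for $c\le r<a-2$, with $W_rV=V$ for $r\ge a$, $W_rU=U$ for $r\ge a-2$, and both zero below $c$. Then $f$ and $g$ are compatible with $W$ and strict, and $fg$, $gf$ are nilpotent. But $gf\neq 0$ while $\GR^W g\circ\GR^W f=0$, so the diagram is of type $D_1\oplus C_1$ and its graded is of type $B_1\oplus A_1$. They are not even isomorphic as unfiltered diagrams.

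What excludes this for mixed Hodge structures is that $gf$, $N_V^i$, $fN_V^i,\dots$, and all the kernels, images and intersections in Definition~\ref{mar4-3}, are again morphisms or subobjects in the abelian category of mixed Hodge structures, hence $W$-strict. The paper uses exactly this to choose the decomposition (\ref{ap8-6}) compatibly with $W$ (Lemma~\ref{may8-1-1}, Lemma~\ref{may8-6}, Corollary~\ref{may12-1}). It then builds $\eta_V,\eta_U$ summand by summand.

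Neither of your fallbacks repairs this. Modifying Definition~\ref{may18-5} changes the category in the statement. Absorbing the defect between a complex and a rational splitting into $M_V,M_U$ does not work either: these matrices only move the Hodge filtration, while the defect lies either in $f,g$ or in the $\QQ$-structure. Your Deligne idea can be completed differently, though. After fixing arbitrary rational splittings of $W$, the admissible pairs $(\eta_V,\eta_U)$ are the solutions of an inhomogeneous linear system with $\QQ$-coefficients. The functorial $I^{p,q}$-splitting gives a complex solution, so a rational one exists.

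\textbf{Reading off the matrices.} The condition in Definition~\ref{may18-5} is the literal identity $f_i m_{V,ij}=m_{U,(i-2)(j-2)}(-1) f_j$, $g_i m_{U,ij}=m_{V,ij} g_j$. This is sufficient for $f,g$ to preserve the twisted Hodge filtrations, but not necessary. Replacing $M_U$ by $M_UB$ with $B$ filtration-preserving leaves the filtration unchanged but can destroy the identity. So after transporting you cannot simply read off $M_V,M_U$ from (\ref{june10-1}). You must first normalize them to satisfy condition $(*)$ by right multiplication by Hodge-filtration-preserving matrices, which does not change the filtrations (Lemma~\ref{may7-1}). Only then does Proposition~\ref{sep28-1} yield the required identities, as in Lemma~\ref{may18-1}.
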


We then use this description to determine the isomorphism classes of objects in 
$\MHM_{\mathrm{mon},\mathrm{unip}}(\CC_t)$.
The point is that the category $\calC$ contains redundant data.
Since morphisms of mixed Hodge structures are strict,
the Hodge filtration on some parts is determined by the Hodge filtration on other parts.
For example, the Hodge filtration on the image of $f$ is determined by that of $\Vd$.
Moreover, some parts of $M_\Vd$ and $M_\Ud$ must vanish.
Thus the data of $M_\Vd$ and $M_\Ud$ are in fact determined by a much smaller set of components.

Let $G=\bigoplus_{s=1}^\ell (X^s)^H_{i(s)}\otimes H^s$ be a split graded model.
For $1\leq s \leq \ell$, we define $Z_s\in \{\Vd,\Ud\}$ to be the side from which the diagram of $(X^s)_{i(s)}^H$ starts (see (\ref{aug5-9}))
and define $a_s$ as the integer such that the first term of the diagram of $(X^s)^H_{i(s)}$ is $\QQ^H(a_s)$ (see (\ref{oct8-1})).
We define $\Delta$ as a set of indices
$(s,t,q)\in ([1,\ell]\cap \ZZ)^2\times \ZZ$
satisfying the following conditions (see Definition~\ref{may22-1} for the precise definition):
\begin{enumerate}
    \item $H^t(q)$ appears in the $Z_s$-part of the diagram of $G$, that is, in (\ref{june9-6}) or (\ref{june9-7}).
    \item The weight of $H^s(a_s)$ is greater than that of $H^t(q)$.
    \item After applying $f$ and $g$ successively, the sequence starting from $H^s(a_s)$ does not vanish earlier than the sequence starting from $H^t(q)$.
\end{enumerate}
Then, we have the following.

\begin{thm}[Theorem~\ref{aug5-5}]
Let $\frC_3$ be the set of pairs $(G,\{\mu_{s,t,q}\}_{(s,t,q)\in \Delta})$,
where $G$ is a split graded model and $\{\mu_{s,t,q}\}_{(s,t,q)\in \Delta}$ is a family of $\CC$-linear maps
\[\mu_{s,t,q}\in \Hom_{\CC}(H^s(a_s),H^t(q)).\]
Then,
we have a surjective map
\[\frC_3\twoheadrightarrow \MHM_{\mathrm{mon},\mathrm{unip}}(\CC_t)/\simeq,\]
where the right-hand side is the set of isomorphism classes of objects in $\MHM_{\mathrm{mon},\mathrm{unip}}(\CC_t)$.
\end{thm}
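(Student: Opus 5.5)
The map $\frC_3\to \MHM_{\mathrm{mon},\mathrm{unip}}(\CC_t)/\simeq$ is built by assembling the family $\{\mu_{s,t,q}\}$ into a pair of matrices. Surjectivity is then proved by starting from an arbitrary $\calM$ and reducing $(M_\Vd,M_\Ud)$ to a normal form whose free entries are exactly indexed by $\Delta$.

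\emph{Construction of the map.} Given $(G,\{\mu_{s,t,q}\})$, I would define upper triangular matrices $M_\Vd\in\UpT_\Vd$ and $M_\Ud\in\UpT_\Ud$ as follows. On each leading summand $H^s(a_s)$ on the $Z_s$-side, the block towards $H^t(q)$ is $\mu_{s,t,q}$. Condition 2 in the definition of $\Delta$ ensures this entry lies above the diagonal. The entry is then transported along the chain: on the summand obtained from $H^s(a_s)$ by applying $f,g$ alternately $k$ times, I put the block towards the $k$-th successor of $H^t(q)$ equal to the same map, up to the Tate twists and the signs coming from $N_i$. All other off-diagonal entries are set to zero. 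Condition 3 is exactly what makes this compatible with $\GR^W f$ and $\GR^W g$. The chain starting at $H^s(a_s)$ survives at least as long as the one starting at $H^t(q)$, so whenever the target chain ends, the source chain has not yet been killed, and no contradiction arises. One checks that $f$ and $g$ then preserve the Hodge filtrations (\ref{june10-1}), which gives an object of $\calC$ in the sense of Definition~\ref{may18-5}. Via Proposition~\ref{aug4-1-prop}, this object is a monodromic mixed Hodge module.

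\emph{Surjectivity.} Let $\calM$ be given. Theorem~\ref{may15-5} gives a split graded model $G\simeq G(\calM)$, and Proposition~\ref{june9-8} gives matrices $M_\Vd,M_\Ud$ with $(G,M_\Vd,M_\Ud)\in\calC$ representing $\calM$. The freedom is to change $M_\Vd,M_\Ud$ by $\UpT_\QQ$ on the left and by $\UpT_F$ on the right, provided the changes commute with $f$ and $g$. I would show that all entries of $M_Z$ are determined by their restrictions to the ``leading'' summands $H^s(a_s)$. This holds because every other summand is the image under $f$ or $g$ of an earlier summand in its chain, and by strictness its Hodge filtration is the image of the earlier one. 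Concretely, this means that modulo the allowed gauge, $M_\Ud$ restricted to $f(\text{summand})$ is forced by $M_\Vd$, and symmetrically for $g$. I would argue this by induction along the chains, working from the starting terms.

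Next, the entries on leading summands that violate condition 3 are killed. If the chain from $H^s(a_s)$ dies strictly before that from $H^t(q)$, then the component of $F^\bullet$ along $H^t(q)$ would have to map to zero under the relevant power of $fg$ or $gf$. Since that power is injective on the $H^t(q)$-chain at that stage, the component must lie in the Hodge filtration part. It can therefore be absorbed into $\UpT_F$, or is forced to vanish. Entries violating condition 2 are zero automatically by upper-triangularity. After this reduction, the remaining data is exactly a family $\mu_{s,t,q}$ for $(s,t,q)\in\Delta$, and the constructed object coincides with $(G,M_\Vd,M_\Ud)$ up to isomorphism in $\calC$.

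\emph{Main obstacle.} I expect the hardest step to be the simultaneous gauge-fixing for $\Vd$ and $\Ud$. Elements of $\UpT_\QQ$ and $\UpT_F$ used on one side must be chosen compatibly with $f$ and $g$ so that they define morphisms in $\calC$. My first attempt would be to perform the modifications chain by chain, in decreasing order of weight of the leading term. At each step I would use strictness, via a splitting of $\GR^W$ compatible with $f$ and $g$ given by the split graded model, to extend a correction on a leading summand uniquely along its chain.
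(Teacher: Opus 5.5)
Your construction of the map is the existence half of Lemma~\ref{may27-8} and is fine. The gap is in surjectivity. The two reductions you rely on are only asserted: ``modulo the allowed gauge, $M_U$ restricted to $f(\text{summand})$ is forced by $M_V$'' and ``it can therefore be absorbed into $\UpT_F$, or is forced to vanish''. The simultaneous gauge-fixing that would justify them is, by your own account, left open. So you have not shown that every object of $\calC$ arises from some family $\{\mu_{s,t,q}\}_{(s,t,q)\in\Delta}$.

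The premise of your ``main obstacle'' is also off. Right multiplication by $\UpT_F$ does not change the filtration $M(\bigoplus_w F^pL_{w,\CC})$ at all, so such moves never affect whether $f$ and $g$ are morphisms of mixed Hodge structures. In addition, getting $(G,M_V,M_U)\in\calC$ from $\calM$ is the essential surjectivity in Proposition~\ref{aug4-1-prop}, which uses Lemma~\ref{may8-2} and Proposition~\ref{sep28-1}; it does not follow from Proposition~\ref{june9-8} alone.

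The missing idea is that no gauge-fixing is needed once $(G,M_V,M_U)\in\calC$, which you already assume at the start of your surjectivity argument. Objects of $\calC$ satisfy the exact intertwining relations $f_i\circ m_{V,ij}=m_{U,(i-2)(j-2)}(-1)\circ f_j$ and $g_i\circ m_{U,ij}=m_{V,ij}\circ g_j$. This is where strictness really enters: via condition $(*)$ and Proposition~\ref{sep28-1}, the identity $F^p\cap\overline{F^{i-p+1}}=0$ turns ``$f$, $g$ are morphisms of mixed Hodge structures'' into these equalities.

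On a split graded model, $f$ and $g$ preserve the index $s$ and are identities or zero between the summands $H^s(j)$. Comparing blocks in the commuting squares (Lemmas~\ref{oct6-1} and~\ref{may30-5-1}) therefore gives two facts on the nose:
\begin{itemize}
\item every block $H_Y(s,p)\to H_Y(t,q)$ is either zero or the Tate twist of the leading block $H^s(a_s)\to H^t(a_s+q-p)$;
\item a leading block violating ($\delta$) is zero. At the stage where the source chain is killed but the target chain continues, one side of the square is $0$ and the other is an identity composed with the block, so the block vanishes, and this propagates back along the chain.
\end{itemize}
Together with upper triangularity and $m_{ii}=\iden$, this is the uniqueness part of Lemma~\ref{may27-8}. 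Reading off $\mu_{s,t,q}$ from the leading blocks of any representative and applying your construction returns exactly $(M_V,M_U)$. That gives surjectivity onto $\frC_2$, and hence onto isomorphism classes via Propositions~\ref{may27-9} and~\ref{aug4-1-prop}. There is never anything to ``absorb into $\UpT_F$'': the intertwining relations already force the normal form.
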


The above surjection is explicit.
Given $(G,\{\mu_{s,t,q}\}_{(s,t,q)\in \Delta})\in \frC_3$,
let $\Vd_G$ and $\Ud_G$ be the $\Vd$-part and the $\Ud$-part of $G$ as mixed Hodge structures.
Starting from the data $\{\mu_{s,t,q}\}_{(s,t,q)\in\Delta}$, 
we complete these maps to matrices $M_\Vd$ and $M_\Ud$
so that $(G,M_\Vd,M_\Ud)$ is in $\calC$.
By the formula (\ref{june10-1}),
these matrices define new Hodge filtrations on the underlying vector spaces of $\Vd_G$ and $\Ud_G$,
and hence new mixed Hodge structures $\Vd_G^{M_\Vd}$ and $\Ud_{G}^{M_\Ud}$,
together with a diagram of the form (\ref{june9-1}).
This means that every object of $\MHM_{\mathrm{mon},\mathrm{unip}}(\CC_t)$ 
can be represented by a split graded model together with a family of maps $\{\mu_{s,t,q}\}_{(s,t,q)\in \Delta}$.

Let us give an example.
Consider $G=D_2^H(1)\oplus A_1^H$, where $D_2^H$ and $A_1^H$ are as in (\ref{june9-4}).
We set $(X^1)^H_{i(1)}=D_2^H$, $H^1=\QQ^H(1)$, $(X^2)^H_{i(2)}=A_1^H$
and 
$H^2=\QQ^H$.
In this case, the diagram of $G$ is as follows:
\[
\xymatrix@C=25pt@R=3pt{
\QQ^H\ar[rd]|-{\scriptscriptstyle (-1)}&&&\QQ^H\ar[ld]\\
&\QQ^H(1)\ar[ld]&\QQ^H\ar@{.>}[dr]&\\
\QQ^H(1)\ar[dr]|-{\scriptscriptstyle (-1)}&\ar@{}[r]|{\oplus}&&0\\
&\QQ^H(2)\ar[ld]&&\\
\QQ^H(2)\ar@{.>}[dr]&&&\\
&0&&,\\
}
\]
where $\QQ^H$ is the trivial Hodge structure of weight $0$.
We have $\Vd_G=(\QQ^H\oplus \QQ^H(1)\oplus \QQ^H(2))\oplus \QQ^H$ and $\Ud_G=(\QQ^H(1)\oplus \QQ^H(2))\oplus \QQ^H$.
Then, 
$\{\mu_{s,t,q}\}$ consists of three maps $(\mu_{1,1,0},\mu_{1,1,1},\mu_{2,1,1})$:
\[
\xymatrix@C=25pt@R=3pt{
\QQ^H\ar[rd]|-{\scriptscriptstyle (-1)}
\ar@/_10pt/[dd]^{\mu_{1,1,0}}\ar@/_28pt/[dddd]_{\mu_{1,1,1}}\ar@/^18pt/[drr]^{\xcancel{\mu_{1,2,0}}}&&&\QQ^H\ar[ld]\ar@/^18pt/[dll]^{\xcancel{\mu_{2,1,0}}}\ar@/^18pt/[dddll]^{\mu_{2,1,1}}\\
&\QQ^H(1)\ar[ld]&\QQ^H\ar@{.>}[dr]&\\
\QQ^H(1)\ar[dr]|-{\scriptscriptstyle (-1)}&\ar@{}[r]|{\oplus}&&0\\
&\QQ^H(2)\ar\ar[ld]&&\\
\QQ^H(2)\ar@{.>}[dr]&&&\\
&0&&.\\
}
\]
Here,
$\mu_{1,2,0}$ (resp. $\mu_{2,1,0}$) is absent
because the second (resp. third) condition of $\Delta$ is not satisfied.
This means that
any object $\calM\in \MHM_{\mathrm{mon},\mathrm{unip}}(\CC_t)$
such that $G(\calM)$ is isomorphic to $D_2^H(1)\oplus A_1^H$
can be represented by $(\mu_{1,1,0},\mu_{1,1,1},\mu_{2,1,1})$.

Finally, we consider the case where the monodromy does not have eigenvalue $1$.
In this case,
every monodromic perverse sheaf is the zero extension of its restriction to $\CS$,
so the classification is simpler (see Proposition~\ref{june14-1}).
This result can be reinterpreted using the classification in the unipotent case,
as explained in Subsection~\ref{aug5-2}.
Using this, we obtain analogues of the classification results for the unipotent case in Subsection~\ref{ssclasmhmnonunip}.

\subsection*{Acknowledgments}
I would like to thank Claude Sabbah for pointing out the reference \cite{Monvel} to me (see Remark~\ref{aug5-11}) and for reading a draft of this paper and giving me helpful comments.
I would also like to thank Tatsuki Kuwagaki and Yota Shamoto for their encouragement.
This work was supported by JSPS KAKENHI Grant Number JP25K17236.

\subsection*{Use of AI}
The author used ChatGPT (OpenAI, on the \$22/month plan) to assist with improving the English presentation,
organizing and improving some proofs,
and checking notation and consistency.
Although parts of the wording suggested by AI were adopted,
the final manuscript was prepared and typed by the author, who takes full responsibility for its contents.
All proofs were independently checked by the author without the use of AI.

\section{Monodromic mixed Hodge modules on $\CC$}

\subsection{Diagram description}
Let $\CC_t$ be an affine line with coordinate $t$
and let $j\colon \CS:=\CC_t\setminus\{0\}\hookrightarrow \CC_t$ denote the inclusion.
We denote by $\MHM(\CC_t)$ the category of 
algebraic graded-polarizable mixed Hodge modules on $\CC_t$.
As mentioned in the Introduction,
we will omit the adjectives ``algebraic" and ``graded-polarizable".
An object of $\MHM(\CC_t)$ is represented by a tuple
$(M,F^\bullet M,K,W_\bullet K)$,
where $M$ is the underlying $D$-module, $F^\bullet M$ is the Hodge filtration,
$K$ is the underlying $\QQ$-perverse sheaf
with an isomorphism $\mathrm{DR}(M)\simto \CC\otimes_\QQ K$,
where $\mathrm{DR}$ is the de Rham functor,
and
$W_\bullet K$ is the weight filtration.

\begin{defi}
    For a perverse sheaf $K$ on $\CC_t$ with coefficients in a field $\taik$,
we say that $K$ is monodromic if $K|_{\CS}$ is a shifted local system.
We say that $K$ is unipotent if there exists a finite subset 
$S\subset \CC$ such that $K|_{\CC\setminus S}$ is a shifted local system
and the local monodromy at each point of $S$ is unipotent.
We denote by $\Perv_{\mon}(\taik_{\CC_t})$ (resp. $\Perv_{\mon,\unip}(\taik_{\CC_t})$)
the category of monodromic (resp. unipotent and monodromic) perverse sheaves.
    For a mixed Hodge module $\calM=(M,F^\bullet M,K,W_\bullet K)$ on $\CC_t$,
    if $K$ is monodromic (resp. unipotent), then we say that $\calM$ is monodromic (resp. unipotent).
    We denote by $\MHM_{\mathrm{mon}}(\CC_t)$ (resp. $\MHM_{\mathrm{mon},\mathrm{unip}}(\CC_t)$) the category of monodromic (resp. unipotent and monodromic) mixed Hodge modules on $\CC_t$.
\end{defi}

\begin{rem}
    Another characterization of monodromic objects
    in terms of $D$-modules is explained in \cite{SaiMon}.
\end{rem}

For a monodromic perverse sheaf $K\in \Perv_{\mon}(\taik_{\CC_t})$,
there is a canonical decomposition $K=K_1\oplus K_{\neq 1}$
such that $K_1\in \Perv_{\mon,\unip}(\taik_{\CC_t})$
and $K_{\neq 1}$ is the zero-extension $j_!\calL[1]$ of some local system $\calL$ on $\CS$ whose monodromy does not have eigenvalue $1$.
Moreover,
the category $\Perv_{\mon,\unip}(\taik_{\CC_t})$ is equivalent to the category of diagrams of finite-dimensional $\taik$-vector spaces:
\begin{align}\label{june11-6}
    \xymatrix@C=40pt{\Vd\ar@<0.5ex>[r]^-{v}&\Ud\ar@<0.5ex>[l]^-{c}},
\end{align}
such that $vc$ is nilpotent.
More concretely,
an object $K$ corresponds to the diagram
\begin{align}\label{june11-4}
    \xymatrix@C=50pt{{}^p\phi_{t}K\ar@<0.5ex>[r]^-{\chiisaivar}&{}^p\psi_{t}K\ar@<0.5ex>[l]^-{\Can}},
\end{align}
where ${}^p\psi_{t}$ (resp. ${}^p\phi_{t}$) denotes the nearby cycle (resp. vanishing cycle) functor of the perverse sheaves, 
and $\Can$ and $\chiisaivar$
denote the natural morphisms associated with these functors.
This description admits a Hodge-theoretic enhancement.

%since $\Var$ is the morphism from ${}^p\phi_{t}K$ to ${}^p\psi_{t}K(-1)=\frac{1}{2\pi\sqrt{-1}}{}^p\psi_{t}K$,
%we need to multiply $\Var$ by $2\pi\sqrt{-1}$ to obtain a morphism ${}^p\phi_{t}K\to {}^p\psi_{t}K$.
%Therefore, Corollary~\ref{june11-3} is a Hodge theoretic refinement of this equivalence.

In \cite{SaiMon}, we introduced the category $\calG(X)$ for a complex manifold $X$.
In this paper, we take $X=\pt$ and write $\calG=\calG(\pt)$.
An object of $\calG$ is a tuple $((\Ud,T_s,N),\Vd,c,v)$,
where $\Ud$ and $\Vd$ are mixed Hodge structures and
\begin{align*}
    T_s\colon \Ud&\simto \Ud\\
    N\colon \Ud&\to \Ud(-1)\\
    c\colon \Ud_0(:=&\kaa(T_s-1))\to \Vd\\
    v\colon \Vd&\to \Ud_0(-1)
\end{align*}
are morphisms of mixed Hodge structures with the following properties:
\begin{enumerate}
    \item $T_s$ commutes with $N$ and $N$ is nilpotent.
    \item The underlying $\CC$-vector space $\Ud_\CC$ of $\Ud$ is decomposed as
    \[\Ud_\CC=\bigoplus_{\alpha\in (-1,0]\cap \QQ}\Ud_{\CC,\alpha},\]
    where $\Ud_{\CC,\alpha}:=\kaa(T_s-\exp(-2\pi\sqrt{-1}\alpha))\subset \Ud_{\CC}$.
    \item $vc\colon \Ud_0\to \Ud_0(-1)$ is $-N$. 
\end{enumerate}
Then, for a mixed Hodge module $\calM\in \MHM_{\mathrm{mon}}(\CC_t)$ on $\CC_t$,
the following tuple defines an object in $\calG$:
\begin{align}\label{june11-1}
    \left(\left(\psi_{t}\calM,T_s,N\left(=-\frac{1}{2\pi\sqrt{-1}}\log T_u\right)\right),\phi_{t,1}\calM,\Can,\Var\right),
\end{align}
where 
$\psi_{t}\calM$ (resp. $\phi_{t,1}\calM$) is the nearby cycle (resp. the unipotent part of the vanishing cycle) of $\calM$,
and $T_s$ (resp. $T_u$) is the semisimple (resp. unipotent) part of the monodromy automorphism of $\psi_{t}\calM$.

\begin{prop}[Theorem~3.5 of \cite{SaiMon}]\label{june11-5}
    We have an equivalence of categories:
    \begin{align}
        \MHM_{\mathrm{mon}}(\CC_t)\simto \calG,
    \end{align}
    which sends $\calM\in  \MHM_{\mathrm{mon}}(\CC_t)$ to 
    $((\psi_{t}\calM,T_s,N),\phi_{t,1}\calM,\Can,\Var)$.
\end{prop}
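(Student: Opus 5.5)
The plan is to build the equivalence from M.~Saito's gluing description of mixed Hodge modules along the divisor $t=0$, together with a description of monodromic objects on $\CS$ by their limit data at $0$.

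\emph{Step 1: gluing along $t=0$.} By Saito's theory (\cite{MHM}; the Hodge analogue of the Beilinson/Verdier gluing), $\MHM(\CC_t)$ is equivalent to the category of tuples $(\calM',\calM'',c,v)$. Here $\calM'$ is an admissible mixed Hodge module on $\CS$, $\calM''=\phi_{t,1}\calM$ is a mixed Hodge structure at $0$, and $c\colon\psi_{t,1}\calM'\to\calM''$ and $v\colon\calM''\to\psi_{t,1}\calM'(-1)$ satisfy $vc=-N$. Sending $\calM$ to $(j^*\calM,\phi_{t,1}\calM,\Can,\Var)$ realizes this. For monodromic $\calM$, the object $\calM'=j^*\calM$ is $\calL^H[1]$ for an admissible graded-polarizable variation of mixed Hodge structure $\calL^H$ on $\CS$. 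The problem is therefore reduced to proving the following: the functor
\[
\calL^H\longmapsto(\psi_t\calL^H,T_s,N),
\]
from admissible variations on $\CS$ that extend to algebraic mixed Hodge modules on $\CC_t$, to mixed Hodge structures with $T_s$ and $N$ satisfying conditions 1 and 2 of the definition of $\calG$, is an equivalence. Given this, the data $(c,v)$ with $vc=-N$ match exactly the remaining data in $\calG$.

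\emph{Step 2: the VMHS on $\CS$ is a nilpotent orbit.} This is the step I expect to be the main obstacle. Monodromic means that $K|_{\CS}$ is a local system, so $M$ is regular holonomic on $\CC_t$ with singularities only at $0$ and $\infty$. I would use the $D$-module characterization of monodromicity from \cite{SaiMon}: $M$ is a direct sum of generalized eigenspaces of $t\partial_t$, and by Saito's theory the Hodge filtration is compatible with this grading. Restricting to the $\alpha$-eigenspace in the $V$-filtration, I would then identify $F^\bullet$ on $M|_{\CS}$ with the filtration induced from $F^\bullet\psi_{t}\calM$ via the untwisted trivialization $\exp(-\log t\,(\alpha+N))$. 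In other words, $\calL^H$ is the nilpotent orbit determined by $(\psi_t\calM,F,N,T_s)$, and the weight filtration is the relative monodromy filtration. This gives full faithfulness on the $\CS$-part: morphisms of such variations are the morphisms of limit data commuting with $T_s$ and $N$, by rigidity of flat sections together with strictness.

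\emph{Step 3: essential surjectivity.} Conversely, start from $(\Ud,T_s,N)$. Use the eigenspace decomposition $\Ud_\CC=\bigoplus_\alpha\Ud_{\CC,\alpha}$ to build the local system with monodromy $T_s\exp(-2\pi\sqrt{-1}N)$ and the nilpotent orbit Hodge filtration. This is a graded-polarizable admissible variation, by Schmid's nilpotent orbit theorem and the Cattani--Kaplan--Schmid/Kashiwara results in the one-variable mixed case; an explicit construction is in \cite[Proposition~3.9]{SaiMon}. Admissibility at $\infty$ follows by the symmetry $t\mapsto t^{-1}$, which turns the limit data into $(\Ud,T_s^{-1},-N)$. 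Gluing with $(\Vd,c,v)$ by Step~1 produces a monodromic object in $\MHM(\CC_t)$, and it is algebraic because it has regular singularities with these data. Composing the equivalences of Steps~1--3 then gives the proposition.
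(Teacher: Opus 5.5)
The paper gives no proof of this statement (it is quoted from \cite[Theorem~3.5]{SaiMon}), so your outline has to stand on its own. Step~1 is fine: M.~Saito's gluing along $t=0$ reduces everything to showing that $\calL^H\mapsto(\psi_t\calL^H,T_s,N)$ is an equivalence.

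The gap is in Step~2, which you rightly flag and which carries all the content. You assert that ``by Saito's theory'' the Hodge filtration is compatible with the decomposition $M=\bigoplus_\beta M^\beta$ into generalized $t\partial_t$-eigenspaces. The compatibility of $F$ with the $V$-filtration at $t=0$ does not imply this. No argument local at $0$ can, because the statement fails without algebraicity; in \cite{SaiMon} it is a separate, global theorem.

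Concretely, let $H_\QQ=\QQ e_0\oplus\QQ e_1$ with $W_{-2}=\QQ e_1\simeq\QQ(1)$, $\GR^W_0\simeq\QQ(0)$, trivial connection, and $F^0=\CC(e_0+te_1)$.
\begin{itemize}
\item This is a graded-polarizable variation on all of $\CC_t$, hence an analytic mixed Hodge module, and its $D$-module $\CC[t]e_0\oplus\CC[t]e_1$ is monodromic.
\item Its whole tuple in $\calG$ coincides with that of the split constant variation $\QQ^H(1)\oplus\QQ^H$.
\item Yet $e_0+te_1$ is not homogeneous, and the two variations are not isomorphic: the identity of limit data does not lift.
\end{itemize}
It is excluded from $\MHM_{\mathrm{mon}}(\CC_t)$ only because at $t=\infty$ the limit of $F^0$ is $\CC e_1\subset W_{-2}$, i.e.\ it is not admissible at $\infty$. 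Your Step~2 never uses $\infty$, so it proves neither the nilpotent-orbit description nor full faithfulness.

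The weight filtration has the same problem. $\psi_t$ only records $M(N,W^{\calL})$. Near $0$, the pure weight $-1$ nilpotent orbit with limit $\QQ(0)\oplus\QQ(1)$ and $\calL_2^H$ have identical $(\psi_t,F,N)$. So the identification $W^{\calL}=W(\psi_t)$ that you implicitly use is again a global fact. (Your phrase ``the weight filtration is the relative monodromy filtration'' does not supply it.)

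To close the gap you have to use admissibility at $\infty$, for example as follows.
\begin{enumerate}
\item After a base change $t=s^m$, assume $T_s=\iden$.
\item Each $\GR^W_k$ is a polarizable variation on all of $\CS$. So its monodromy is finite (Deligne's semisimplicity), and after the base change it is constant (rigidity of horizontal maps).
\item Hence $N$ kills $\GR^W$. The existence of the relative monodromy filtration then forces $M(N,W)=W$ and $N(W_k)\subset W_{k-2}$.
\item The untwisted filtration $t\mapsto\exp(\pm\log t\cdot N)F(t)$ is a single-valued holomorphic map from $\CS$ to the set of filtrations of $U_\CC$ inducing the fixed Hodge structures on $\GR^W$.
\item This set is an orbit of the unipotent group $\exp W_{-1}\mathfrak{gl}(U_\CC)$, hence an affine space. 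Admissibility at $0$ and $\infty$ extends the map holomorphically to $\mathbb{P}^1$ with values in it, so the map is constant.
\end{enumerate}
This gives the nilpotent-orbit description, $W^{\calL}=W(\psi_t)$, and full faithfulness at once.

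Two smaller points in Step~3:
\begin{itemize}
\item Schmid's and Cattani--Kaplan--Schmid's results are not what makes the orbit a variation on all of $\CS$. They give polarizability only near the puncture, and a pure orbit with $N\neq 0$ is never a variation on $\CS$. What works is to take $W(U)$ as the weight filtration of the variation and use $N(W_kU)\subset W_{k-2}U$. This makes $\GR^W$ constant and admissibility at $0$ and $\infty$ immediate.
\item Algebraicity of the glued object comes from admissibility at $\infty$, not from regularity of the singularities; the example above has regular singularities but is not algebraic.
\end{itemize}
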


Moreover, we define $\calG_{\unip}$ to be a subcategory of $\calG$
consisting of tuples $((\Ud,T_s,N),\Vd,c,v)$ such that $T_s$ is the identity.
Then, giving an object of $\calG_{\unip}$ is 
equivalent to giving a diagram of mixed Hodge structures:
\begin{align}\label{june11-2-1}
    \xymatrix@C=40pt{\Vd\ar@<0.5ex>[r]^-{v,\ (-1)}&\Ud\ar@<0.5ex>[l]^-{c}},
\end{align}
such that $vc$, and hence also $cv$, is nilpotent. 

\begin{cor}\label{june11-3}
    We have an equivalence of categories:
    \begin{align}
        \MHM_{\mathrm{mon},\unip}(\CC_t)\simto \calG_\unip,
    \end{align}
    which sends $\calM\in  \MHM_{\mathrm{mon},\unip}(\CC_t)$ to the diagram
    \begin{align}\label{june11-2-2}
    \xymatrix@C=40pt{\phi_{t}\calM(=\phi_{t,1}\calM)\ar@<0.5ex>[r]^-{\Var,\ (-1)}&\psi_{t}\calM(=\psi_{t,1}\calM)\ar@<0.5ex>[l]^-{\Can}}.
\end{align}
\end{cor}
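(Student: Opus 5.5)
The plan is to deduce Corollary~\ref{june11-3} from Proposition~\ref{june11-5} by restriction. Two things must be checked: the equivalence $\MHM_{\mathrm{mon}}(\CC_t)\simto\calG$ sends the full subcategory $\MHM_{\mathrm{mon},\unip}(\CC_t)$ exactly onto the full subcategory $\calG_\unip$, and $\calG_\unip$ is equivalent to the category of diagrams (\ref{june11-2-1}).

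First I show that $\calM$ is unipotent if and only if $T_s=\iden$ on $\psi_t\calM$. Since $\calM$ is monodromic, $K|_{\CS}$ is a shifted local system. Hence the only possible non-unipotent local monodromy is at $0$ (or at $\infty$, which is determined by it), and the monodromy of $K|_{\CS}$ around $0$ is the monodromy $T=T_sT_u$ on ${}^p\psi_tK$. Thus unipotence of $K$ is equivalent to $T_s=\iden$, i.e. $\Ud_{\CC}=\Ud_{\CC,0}$. Conversely, if $T_s=\iden$ then the local system has unipotent monodromy, so $K$ is unipotent. Fullness and faithfulness of the restricted functor are inherited from Proposition~\ref{june11-5}, because both subcategories are full. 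Essential surjectivity onto $\calG_\unip$ follows since any preimage of an object with $T_s=\iden$ is unipotent by the criterion just proved.

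Next I identify $\calG_\unip$ with the diagrams. When $T_s=\iden$ we have $\Ud_0=\Ud$, so $c\colon \Ud\to\Vd$ and $v\colon \Vd\to\Ud(-1)$, and $N=-vc$ is recovered from $(c,v)$. The only remaining condition on a tuple is that $N$ is nilpotent, which is equivalent to nilpotency of $vc$. Nilpotency of $vc$ in turn gives nilpotency of $cv$, via $(cv)^{k+1}=c(vc)^kv$. Morphisms in $\calG$ commute with $T_s$, $N$, $c$ and $v$; commuting with $N$ is automatic once they commute with $c$ and $v$. So $\calG_\unip$ is isomorphic to the category of diagrams (\ref{june11-2-1}). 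Under this identification, $\phi_{t,1}\calM=\phi_t\calM$ and $\psi_{t,1}\calM=\psi_t\calM$, since the non-unipotent parts vanish, and this gives the diagram (\ref{june11-2-2}).

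The only mildly delicate step is the unipotence criterion. The definition of unipotent allows singular points in a finite set $S$, but monodromicity forces $S\subset\{0\}$, and one must check that ``eigenvalue of $T$ on $\psi_t$'' matches ``local monodromy at $0$''. This is standard, since $\psi_tK$ computes the stalk of the local system near $0$ with its monodromy. Everything else is formal.
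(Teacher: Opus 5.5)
Your proposal is correct and follows the same route as the paper, which gives no separate proof but obtains the corollary by restricting the equivalence of Proposition~\ref{june11-5} to the full subcategory $\calG_\unip$ of tuples with $T_s=\iden$, and identifying these tuples with diagrams (\ref{june11-2-1}) in which $vc$ is nilpotent. Your explicit checks are details the paper leaves implicit: that a monodromic $\calM$ is unipotent exactly when $T_s=\iden$ on $\psi_t\calM$, and that $N=-vc$ and compatibility of morphisms with $N$ come for free.
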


\subsection{The unipotent part and the part without eigenvalue $1$}\label{aug5-1}

In this paper, 
we aim to classify the category $\MHM_{\mathrm{mon}}(\CC_t)$.
We denote by $\MHM_{\mathrm{mon},\neq 1}(\CC_t)$ the full subcategory of $\MHM_{\mathrm{mon}}(\CC_t)$ consisting of objects $\calM$ such that
$\psi_{t,1}\calM=0$ and $\phi_{t,1}\calM=0$.
As in the case of monodromic perverse sheaves,
every object $\calM\in \MHM_{\mathrm{mon}}(\CC_t)$
admits a canonical decomposition $\calM=\calM'\oplus \calM''$,
where $\calM'\in \MHM_{\mathrm{mon},\unip}(\CC_t)$
and $\calM''\in \MHM_{\mathrm{mon},\neq 1}(\CC_t)$.
The summand $\calM''$ is the zero-extension $j_!(\calM''|_{\CS})$ of its restriction $\calM''|_{\CS}$.
Therefore, by Proposition~\ref{june11-5},
classifying such objects $\calM''$
amounts to classifying tuples 
$(\Ud,T_s,N)$,
where $\Ud$ is a mixed Hodge structure, 
$T_s\colon \Ud\simto \Ud$ is an automorphism satisfying $T_s^m=\iden$ for some $m\in \ZZ_{\geq 1}$ and having no eigenvalue $1$,
and $N\colon \Ud\to \Ud(-1)$ is a nilpotent morphism commuting with $T_s$.
After forgetting $T_s$,
the pair $(\Ud,N)$ can be regarded as a special case of a diagram of the form~(\ref{june11-2-1}),
with $V=U$, $v=-N$ and $c=\iden$.
Thus,
the classification of $\calM''$ 
reduces to the classification of diagrams of the form (\ref{june11-2-1})
together with an automorphism $T_s\colon \Ud\simto \Ud$ compatible with the diagram.

For this reason, 
the key step is the classification of the unipotent case.
We proceed as follows.
In Subsection~\ref{ssunipperv},
we classify the objects of 
$\Perv_{\mon,\unip}(\taik_{\CC_t})$,
and then use this to treat monodromic perverse sheaves without eigenvalue $1$ in Subsection~\ref{aug5-2}.
Next, in Subsection~\ref{ssclasmonmhm},
we classify the objects of 
$\MHM_{\mathrm{mon},\unip}(\CC_t)$,
and then use this to treat monodromic mixed Hodge modules without eigenvalue $1$ in Subsection~\ref{ssclasmhmnonunip}.

%\begin{align}\label{june11-4}
 %   \xymatrix@C=30pt{\Ud\ar@<0.5ex>[r]^-{N,\ (-1)}&\Ud\ar@<0.5ex>[l]^-{=}},
%\end{align}

%Hence, the classification of $\MHM_{\mathrm{mon},\unip}(\CC_t)$, or equivalently diagrams of the form (\ref{june11-2}), is the key point.

%where $W$ is a mixed Hodge structure, $T_s\colon W\simto W$ is an automorphism on $W$
%which induces a semisimple map on the underlying $\CC$-vector space of $W$,
%and $N\colon W\to W(-1)$ is a nilpotent endomorphism which commutes with $T_s$.

\section{Classification of monodromic perverse sheaves on $\CC$}
\label{sectionperv}

\subsection{The unipotent case}\label{ssunipperv}

Let $\mathbf{k}=\QQ$ or $\CC$.
In this section,
we recall the classification of diagrams of the form:
\begin{align}\label{jan1-1}
    \xymatrix{\Vd\ar@<0.5ex>[r]^-{f}&\Ud\ar@<0.5ex>[l]^-{g}},
\end{align}
where $\Vd$ and $\Ud$ are finite-dimensional $\mathbf{k}$-vector spaces,
$f\colon \Vd\to \Ud$ and $g\colon \Ud\to \Vd$ are linear maps between vector spaces such that 
    $gf$ and $fg$ are nilpotent.
We denote $gf$ (resp. $fg$) by $N_\Vd$ (resp. $N_\Ud$).
As we mentioned in the previous section,
the category of such diagrams is equivalent to 
$\Perv_{\mon,\unip}(\taik_{\CC_t})$.

\begin{comment}
    
\begin{rem}\say{fieldで大丈夫か？どうするか？tate twistあるから本当はCかQであるべきだが・・・いや、これHMのときだけ(-1)が出るのかな。→そうだね。alg closedいらんかな}
    For a local system $\calL$ on $\CS_t$ whose monodromy automorphism $T$ does not have an eigenvalue $1$,
    there is only one extension of it to a perverse sheaf on $\CC_t$, i.e. $j_!\calL$ for $j\colon \CS\to \CC$.
    Therefore, the category of monodromic perverse sheaves whose monodromy automorphism do not have the eigenvalue $1$ has been already classified,
    because $\calL$ is determined only by the monodromy automorphism.
    In this case,
    the nearby-vanishing cycle diagram\say{この用語あとできめる} of $j_!\calL$ is 
    of the form:\say{これ等価だが間違いだね。varを何にするかは検討すべき。固有値の情報失ってるからかな}
    \begin{align}\label{jan1-2}
    \xymatrix{W(=\phi_{t,\neq 1}j_!\calL)\ar@<0.5ex>[r]^-{f}&W(=\psi_{t,\neq 1}j_!\calL)\ar@<0.5ex>[l]^-{=}},
\end{align}
where $f$ is the nilpotent part of the monodromy of $\calL$.
Therefore, we can also say that 
the category of monodromic perverse sheaves whose monodromy automorphisms do not have the eignvalue $1$ can be expressed as the above diagram with the semi-simple automorphism of $W$ without eigenvalue $1$ which commutes with $f$.
\end{rem}
\end{comment}

\begin{rem}\label{aug5-11}
More generally, the classification of finite-dimensional nilpotent representations of a cyclic quiver is known; see, for example \cite[Proposition~3.26]{Schiffmann}  (Proposition~3.24 in the arXiv version).
The classification of diagrams of the form (\ref{jan1-1}) is a special case of
the classification of representations of a cyclic quiver, namely the case with two vertices.
For $\taik=\CC$,
the classification of (\ref{jan1-1}) is equivalent to the classification of
algebraic regular holonomic $D$-modules on $\CC$
whose only possible singular point is
$0$ and whose monodromy is unipotent.
The classification of such $D$-modules is described in \cite[pp. 316–317]{Monvel}.
However, 
since the constructions of subquotients and splittings of $\Vd$ and $\Ud$ used in the classification of (\ref{jan1-1}) will be needed later in the
Hodge-theoretic version in Section~\ref{sectionmhm},
we record the proof in this section with the necessary details,
even though the arguments are purely linear algebraic and not new.
\end{rem}

We aim to find a normal form for diagrams of the form (\ref{jan1-1}).
To this end, we introduce some subquotients of $\Vd$ and $\Ud$.
%and then we construct bases of $\Vd$ and $\Ud$ by transporting basis elements of the subquotients along $f$, $g$, and their compositions.
 
\begin{defi}\label{mar4-3}
We define subquotients of $\Vd$ and $\Ud$ as follows.
\begin{align*}
    \Vd_i&:=\Ker{fN_\Vd^{i}}/\left(\Ker{N_\Vd^i}+\left(\Ima{g}\cap \Ker{fN_\Vd^i}\right)\right),\\
    \Vd_i'&:=\Ker{N_\Vd^{i}}/\left(\Ker{fN_\Vd^{i-1}}+\left(\Ima{g}\cap \Ker{N_\Vd^i}\right)\right),\\
\Ud_i&:=\Ker{gN_\Ud^{i}}/\left(\Ker{N_\Ud^i}+\left(\Ima{f}\cap \Ker{gN_\Ud^i}\right)\right),\quad \mbox{and}\\
    \Ud_i'&:=\Ker{N_\Ud^{i}}/\left(\Ker{gN_\Ud^{i-1}}+\left(\Ima{f}\cap \Ker{N_\Ud^i}\right)\right),
\end{align*}    
where $N_\Vd^0=\Id_\Vd$ and $N_\Ud^0=\Id_\Ud$.
Here $\Vd_{i}$ and $\Ud_{i}$ are defined for $i\geq 0$, and $\Vd'_{i}$ and $\Ud'_{i}$ are defined for $i\geq 1$.
We also set
\begin{align*}
    \VT{i}:=\Ker{fN_\Vd^{i}}/\Ker{N_\Vd^i},\quad 
        \VTp{i}:=\Ker{N_\Vd^{i}}/\Ker{fN_\Vd^{i-1}},\\
    \UT{i}:=\Ker{gN_\Ud^{i}}/\Ker{N_\Ud^i},\quad
        \UTp{i}:=\Ker{N_\Ud^{i}}/\Ker{gN_\Ud^{i-1}}.
\end{align*}
Similarly, $\VT{i}$ and $\UT{i}$ are defined for $i\geq 0$, and $\VTp{i}$ and $\UTp{i}$ are defined for $i\geq 1$.

\end{defi}

Note that we have $fN_\Vd^i=N_\Ud^if$ and $gN_\Ud^i=N_\Vd^ig$.
The following lemma is then straightforward.
\begin{lem}\label{jan7-1}
\begin{enumerate}
\item The map $f$ (resp. $g$) induces
maps $\VT{i}\to \UTp{i}$ and $\VTp{i}\to \UT{i-1}$
(resp. $\UT{i}\to \VTp{i}$ and $\UTp{i}\to \VT{i-1}$),
which we denote by the same symbol $f$ (resp. $g$).

    \item 
    We obtain the following sequences of inclusions between subquotients of $\Vd$ and $\Ud$:
    \begin{align*}
        \dots \xhookrightarrow{\ g\ }
\VT{i}        \xhookrightarrow{\ f\ }
\UTp{i}\xhookrightarrow{\ g\ }
\VT{i-1}\xhookrightarrow{\ f\ }
         &\cdots \xhookrightarrow{\ f\ }\UTp{2}
        \xhookrightarrow{\ g\ }\VT{1}
    \xhookrightarrow{\ f\ }\UTp{1}\xhookrightarrow{\ g\ }\VT{0},\quad \mbox{and}\\
       \dots \xhookrightarrow{\ f\ }
 \UT{i}\xhookrightarrow{\ g\ }
        \VTp{i}\xhookrightarrow{\ f\ }
        \UT{i-1}\xhookrightarrow{\ g\ }&
        \cdots \xhookrightarrow{\ g\ }\VTp{2}
        \xhookrightarrow{\ f\ }\UT{1}
        \xhookrightarrow{\ g\ }\VTp{1}\xhookrightarrow{\ f\ }\UT{0}.
    \end{align*}
    \end{enumerate}  
    \end{lem}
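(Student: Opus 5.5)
The plan is to verify directly that $f$ and $g$ send the relevant kernels into the relevant kernels, and then check injectivity of the induced maps. Throughout I use the relations $fN_\Vd^i=N_\Ud^if$ and $gN_\Ud^i=N_\Vd^ig$, together with $gf=N_\Vd$ and $fg=N_\Ud$.

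\emph{Well-definedness (part 1).} Take $\VT{i}=\Ker fN_\Vd^i/\Ker N_\Vd^i\to \UTp{i}=\Ker N_\Ud^i/\Ker gN_\Ud^{i-1}$.
\begin{itemize}
\item If $fN_\Vd^ix=0$, then $N_\Ud^if x=fN_\Vd^ix=0$, so $f$ maps $\Ker fN_\Vd^i$ into $\Ker N_\Ud^i$.
\item If $N_\Vd^ix=0$, then $gN_\Ud^{i-1}fx=N_\Vd^{i-1}gfx=N_\Vd^ix=0$, so $f$ maps $\Ker N_\Vd^i$ into $\Ker gN_\Ud^{i-1}$.
\end{itemize}
Hence $f$ descends to the quotient.

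Next take $\VTp{i}=\Ker N_\Vd^i/\Ker fN_\Vd^{i-1}\to \UT{i-1}=\Ker gN_\Ud^{i-1}/\Ker N_\Ud^{i-1}$.
\begin{itemize}
\item If $N_\Vd^ix=0$, then $gN_\Ud^{i-1}fx=N_\Vd^ix=0$, so $f$ maps $\Ker N_\Vd^i$ into $\Ker gN_\Ud^{i-1}$.
\item If $fN_\Vd^{i-1}x=0$, then $N_\Ud^{i-1}fx=0$, so $f$ maps $\Ker fN_\Vd^{i-1}$ into $\Ker N_\Ud^{i-1}$.
\end{itemize}
Again $f$ descends. The statements for $g$ follow by the symmetric argument, swapping the roles of $(\Vd,f)$ and $(\Ud,g)$.

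\emph{Injectivity (part 2).} For $\VT{i}\to\UTp{i}$: suppose $x\in\Ker fN_\Vd^i$ and $fx\in\Ker gN_\Ud^{i-1}$. Then $0=gN_\Ud^{i-1}fx=N_\Vd^ix$, so $x\in \Ker N_\Vd^i$ and its class is zero.

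For $\VTp{i}\to\UT{i-1}$: suppose $x\in\Ker N_\Vd^i$ and $fx\in \Ker N_\Ud^{i-1}$. Then $fN_\Vd^{i-1}x=N_\Ud^{i-1}fx=0$, so $x\in\Ker fN_\Vd^{i-1}$ and its class is zero.

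The maps induced by $g$ are injective by the same argument with $\Vd$ and $\Ud$ exchanged.

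\emph{Degenerate cases and the chains.} At $i=1$ and $i=0$ one uses $N^0=\Id$, so $\Ker N^0=0$; the argument above still applies without change. Concatenating the induced injections gives the two chains displayed in the statement. There is no real obstacle here; the only care needed is in keeping the indices straight. The key observation is that the "image-of-$g$" and "image-of-$f$" corrections appearing in $\Vd_i$ and $\Ud_i$ are not involved, since the lemma concerns only the $\widetilde{\ }$-quotients.
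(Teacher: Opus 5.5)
Your proof is correct and is the direct verification the paper intends; the paper gives no proof and only says the lemma is straightforward from $fN_V^i=N_U^if$ and $gN_U^i=N_V^ig$. Your explicit use of the factorizations $N_V^i=gN_U^{i-1}f$ and $N_U^i=fN_V^{i-1}g$, both for the remaining kernel inclusions and for injectivity, supplies exactly the details the paper leaves to the reader.
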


  %\begin{align*}
   %     \dots \xhookrightarrow{g}
    %    \Ker{fN_{\Vd}^{i}}/\Ker{N_{\Vd}^{i}}&\xhookrightarrow{f}
     %   \Ker{N_\Ud^{i}}/\Ker{gN_\Ud^{i-1}}\xhookrightarrow{g}
      %  \Ker{fN_{\Vd}^{i-1}}/\Ker{N_{\Vd}^{i-1}}\xhookrightarrow{f}
 %       \dots \\
  %      \dots \xhookrightarrow{f}&\Ker{N_\Ud^2}/\Ker{gN_\Ud}
  %      \xhookrightarrow{g}\Ker{fN_{\Vd}}/\Ker{N_{\Vd}}
  %  \xhookrightarrow{f}\Ker{N_\Ud}/\Ker{g}\xhookrightarrow{g}\Ker{f},\\
  %      \dots \xhookrightarrow{f}
  %      \Ker{gN_\Ud^{i}}/\Ker{N_\Ud^{i}}&\xhookrightarrow{g}
  %%      \Ker{N_{\Vd}^{i}}/\Ker{fN_\Ud^{i-1}}\xhookrightarrow{f}
   %     \Ker{gN_\Ud^{i-1}}/\Ker{N_\Ud^{i-1}}\xhookrightarrow{g}
   %     \dots \\
   %     \dots \xhookrightarrow{g}&\Ker{N_{\Vd}^2}/\Ker{fN_{\Vd}}
   %     \xhookrightarrow{f}\Ker{gN_\Ud}/\Ker{N_\Ud}
   %     \xhookrightarrow{g}\Ker{N_{\Vd}}/\Ker{f}\xhookrightarrow{f}\Ker{g}.
   % \end{align*}

   % Since we can regard $\Vd_i$ (resp. $\Vd_i'$, $\Ud_i$ and $\Ud_i'$) as a subspace of $\VT{i}$ (resp. $\VTp{i}$, 
%$\UT{i}$ and $\UTp{i}$),
%we will consider $\Vd_i$, $\Vd_i'$, $\Ud_i$ and $\Ud_i'$ are parts of the sequences above.

In what follows, by choosing several splittings,
we will non-canonically embed $\Vd_i$, $\Vd_i'$, $\Ud_i$, $\Ud_i'$, and their tilde variants
into $\Vd$ and $\Ud$.

\begin{lem}\label{jan7-2}
    We obtain exact sequences as follows.
    \begin{align*}
        0     \longrightarrow \UTp{i+1}\xhookrightarrow{\ g\ }
        &\VT{i}
        \longrightarrow \Vd_{i}     \longrightarrow 0,\\
        0\lra \UT{i}\xhookrightarrow{\ g\ }
        &\VTp{i}\lra \Vd'_i\lra 0,\\
        0\lra \VTp{i+1}\xhookrightarrow{\ f\ }
        &\UT{i}\lra \Ud_i\lra 0,\\
        0\lra \VT{i}\xhookrightarrow{\ f\ }
        &\UTp{i}\lra \Ud_i'\lra 0.
    \end{align*}
    In particular,
    we have non-canonical decompositions:
    \begin{align}\label{mar4-6}
        \VT{i}\simeq g(\UTp{i+1})\oplus \Vd_i,\ 
        \VTp{i}\simeq g(\UT{i})\oplus \Vd_i',\ 
        \UT{i}\simeq f(\VTp{i+1})\oplus \Ud_i\mbox{,\  and\quad }
        \UTp{i}\simeq f(\VT{i})\oplus \Ud_i'.
    \end{align}
    
\end{lem}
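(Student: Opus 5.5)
The plan is to verify each of the four sequences directly from Definition~\ref{mar4-3}, using only the commutation relations $fN_\Vd^i=N_\Ud^if$, $gN_\Ud^i=N_\Vd^ig$ and the identities $N_\Vd=gf$, $N_\Ud=fg$. By the symmetry exchanging $(\Vd,f)$ with $(\Ud,g)$, the third and fourth sequences are obtained from the first and second. So it suffices to treat the first two.

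\textbf{First sequence.} The map $g\colon \UTp{i+1}\to\VT{i}$ is well defined and injective by Lemma~\ref{jan7-1}, so only exactness in the middle remains. The image of $g$ in $\VT{i}=\Ker fN_\Vd^i/\Ker N_\Vd^i$ is the image of $g(\Ker N_\Ud^{i+1})$. It therefore suffices to show
\[
\Ima g\cap \Ker fN_\Vd^i \;=\; g(\Ker N_\Ud^{i+1}).
\]
\begin{itemize}
\item For the inclusion $\supseteq$: if $N_\Ud^{i+1}u=0$, then $fN_\Vd^i g(u)=N_\Ud^i fg(u)=N_\Ud^{i+1}u=0$.
\item For the inclusion $\subseteq$: if $g(u)\in\Ker fN_\Vd^i$, the same computation gives $N_\Ud^{i+1}u=0$.
\end{itemize}
Hence the cokernel of $g$ is
\[
\Ker fN_\Vd^i/\bigl(\Ker N_\Vd^i+(\Ima g\cap\Ker fN_\Vd^i)\bigr)=\Vd_i .
\]

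\textbf{Second sequence.} Here $g\colon\UT{i}=\Ker gN_\Ud^i/\Ker N_\Ud^i\to\VTp{i}=\Ker N_\Vd^i/\Ker fN_\Vd^{i-1}$, and injectivity again comes from Lemma~\ref{jan7-1}. The analogous key identity is
\[
\Ima g\cap\Ker N_\Vd^i=g(\Ker gN_\Ud^i).
\]
This holds because $N_\Vd^i g(u)=gN_\Ud^i(u)$. Consequently the cokernel is
\[
\Ker N_\Vd^i/\bigl(\Ker fN_\Vd^{i-1}+(\Ima g\cap\Ker N_\Vd^i)\bigr)=\Vd'_i .
\]

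\textbf{Index ranges and splittings.} For the index ranges, the case $i=0$ in the first and third sequences uses $N^0=\Id$, and the formulas go through unchanged. The decompositions (\ref{mar4-6}) then follow by choosing a complement of the image of the injection in each short exact sequence of finite-dimensional $\taik$-vector spaces. Such a choice is possible but non-canonical.

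The only point requiring care is the identification of $\Ima g\cap\Ker(\cdot)$ with the image of the correct kernel, since the rest is formal. I do not expect a real obstacle, but I will double-check the shift of indices ($i$ versus $i+1$, and $i-1$) in each of the four cases.
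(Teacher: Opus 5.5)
Your proposal is correct and is essentially the paper's argument. The paper's proof consists of your key observation that $g(u)\in\Ker{fN_\Vd^i}$ forces $u\in\Ker{N_\Ud^{i+1}}$ (so $\Ima{g}\cap\Ker{fN_\Vd^i}=g(\Ker{N_\Ud^{i+1}})$), combined with the injections of Lemma~\ref{jan7-1}; it then states that the other sequences ``follow similarly''. You additionally spell out the reverse inclusion, the analogous identity $\Ima{g}\cap\Ker{N_\Vd^i}=g(\Ker{gN_\Ud^i})$ for the second sequence, and the $\Vd\leftrightarrow\Ud$, $f\leftrightarrow g$ symmetry that the paper leaves implicit.
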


\begin{proof}
    If an element $u\in \Ud$ satisfies $g(u)\in \Ima{g}\cap \Ker{fN_\Vd^i}$,
then $u$ is in $\Ker{N_\Ud^{i+1}}$.
Then, combining this observation with Lemma~\ref{jan7-1} yields the first exact sequence.
The others follow similarly.
\end{proof}

%We take $i_0\geq 1$ such that $N_{\Vd}^{i_0}=0$ and $N_\Ud^{i_0}=0$.
%In particular, we have $V_{i_0}=0$, $V_{i_0+1}'=0$, $W_{i_0}=0$ and $W_{i_0+1}'=0$.

\begin{lem}\label{mar4-8}
Consider the following commutative diagram with exact rows for $i\in \ZZ_{\geq 1}$:
\begin{align*}
     \xymatrix{
0\ar[r]&\Ker{N^i_\Vd}\ar[d]^f\ar[r]&\Ker{fN_\Vd^i}\ar[r]\ar[d]^f&\wt{\Vd_i}\ar[d]^f\ar[r]&0\\
0\ar[r]&\Ker{gN_\Ud^{i-1}}\ar[d]^g\ar[r]&\Ker{N^i_\Ud}\ar[r]\ar[d]^g&\wt{\Ud_i'}\ar[d]^g\ar[r]&0\\
0\ar[r]&\Ker{N^{i-1}_\Vd}\ar[r]&\Ker{fN_\Vd^{i-1}}\ar[r]&\wt{\Vd_{i-1}}\ar[r]&0.
        }
\end{align*}
There exist sections
$\wt{\Vd_i}\to \Ker{fN^i}$
(resp. $\wt{\Ud_i'}\to \Ker{N^i_\Ud}$)
of $\Ker{fN^i_\Vd}\twoheadrightarrow \wt{\Vd_i}$
(resp.  $\Ker{N_\Ud^i}\twoheadrightarrow \wt{\Ud_i'}$)
for each $i$ such that
the above diagram obtained by adding these sections is commutative.
The same statement holds for the diagram:
\begin{align*}
     \xymatrix{
0\ar[r]&\Ker{N^i_\Ud}\ar[d]\ar[r]&\Ker{gN_\Ud^i}\ar[r]\ar[d]^g&\wt{\Ud_i}\ar[d]^g\ar[r]&0\\  
0\ar[r]&\Ker{fN_\Vd^{i-1}}\ar[d]\ar[r]&\Ker{N^i_\Vd}\ar[r]\ar[d]^f&\wt{\Vd_i'}\ar[d]^f\ar[r]&0\\
0\ar[r]&\Ker{N^{i-1}_\Ud}\ar[r]&\Ker{gN_\Ud^{i-1}}\ar[r]&\wt{\Ud_{i-1}}\ar[r]&0.
        }
\end{align*}

In particular, there exist decompositions of $\Ker{N_\Vd^i}$,
$\Ker{fN_\Vd^i}$, $\Ker{N_\Ud^i}$ and $\Ker{gN_\Ud^i}$ for each $i$
such that the following diagrams commute

\begin{equation}\label{ap1-2}
    \begin{aligned}
\xymatrix{    \Ker{fN_\Vd^i}\ar@{}[r]|{\simeq }\ar[d]^{f}& \Ker{N_\Vd^i}\ar@{}[r]|-{\oplus }\ar[d]^{f}&\VT{i}\ar[d]^{f}\\
\Ker{N_\Ud^i}\ar@{}[r]|{\simeq }\ar[d]^g&\Ker{gN_\Ud^{i-1}}\ar@{}[r]|-{\oplus }\ar[d]^g&\wt{\Ud_{i}'}\ar[d]^{g}\\
  \Ker{fN_\Vd^{i-1}}\ar@{}[r]|{\simeq }& \Ker{N_\Vd^{i-1}}\ar@{}[r]|-{\oplus }&\VT{i-1},
}\qquad 
\xymatrix{    \Ker{gN_\Ud^i}\ar@{}[r]|{\simeq }\ar[d]^{g}& \Ker{N_\Ud^i}\ar@{}[r]|-{\oplus }\ar[d]^{g}&\UT{i}\ar[d]^{g}\\
\Ker{N_\Vd^i}\ar@{}[r]|{\simeq }\ar[d]^f&\Ker{fN_\Vd^{i-1}}\ar@{}[r]|-{\oplus }\ar[d]^f&\wt{\Vd_{i}'}\ar[d]^{f}\\
  \Ker{gN_\Ud^{i-1}}\ar@{}[r]|{\simeq }& \Ker{N_\Ud^{i-1}}\ar@{}[r]|-{\oplus }&\UT{i-1}.
}
\end{aligned}
\end{equation}

\end{lem}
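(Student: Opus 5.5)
The plan is to build the two families of sections simultaneously by \emph{descending} induction on $i$, along the chain of inclusions
\[\cdots\xhookrightarrow{\ g\ }\VT{i}\xhookrightarrow{\ f\ }\UTp{i}\xhookrightarrow{\ g\ }\VT{i-1}\xhookrightarrow{\ f\ }\cdots\]
of Lemma~\ref{jan7-1}, using the splittings (\ref{mar4-6}) of Lemma~\ref{jan7-2}. Write $s_i\colon \VT{i}\to \Ker{fN_\Vd^i}$ and $t_i\colon \UTp{i}\to\Ker{N_\Ud^i}$ for the sections to be constructed. Commutativity of the enlarged diagram amounts to two conditions:
\[f\circ s_i=t_i\circ f \quad\text{on } \VT{i}, \qquad g\circ t_i=s_{i-1}\circ g \quad\text{on } \UTp{i}.\]

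First I check that $f$ and $g$ send lifts to lifts. Let $x\in\Ker{fN_\Vd^i}$ be a lift of $\bar x\in\VT{i}$. Then $f(x)\in\Ker{N_\Ud^i}$, since $N_\Ud^if=fN_\Vd^i$, and $f(x)$ is a lift of $f(\bar x)\in\UTp{i}$. Similarly, let $y\in\Ker{N_\Ud^i}$. Then $g(y)\in\Ker{fN_\Vd^{i-1}}$, because $fN_\Vd^{i-1}g=N_\Ud^{i-1}fg=N_\Ud^i$, and $g(y)$ lifts $g(\bar y)\in\VT{i-1}$. Hence, if $s_i$ is a section, $f\circ s_i\circ f^{-1}$ defines a section over the subspace $f(\VT{i})\subset\UTp{i}$. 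This is well defined because $f\colon\VT{i}\to\UTp{i}$ is injective by Lemma~\ref{jan7-1}. In the same way, a section $t_i$ gives a section $g\circ t_i\circ g^{-1}$ over $g(\UTp{i})\subset\VT{i-1}$.

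For the induction, choose $i_0$ with $N_\Vd^{i_0}=0$ and $N_\Ud^{i_0}=0$. Then $\VT{i}=0$ and $\UTp{i+1}=0$ for $i\geq i_0$, so the sections there are zero. Suppose $s_i$ has been constructed. Fix a linear complement of $f(\VT{i})$ in $\UTp{i}$, isomorphic to $\Ud_i'$ by (\ref{mar4-6}). Define $t_i$ as $f s_i f^{-1}$ on $f(\VT{i})$ and as an arbitrary linear lift on the complement. Next fix a complement of $g(\UTp{i})$ in $\VT{i-1}$, isomorphic to $\Vd_{i-1}$. Define $s_{i-1}$ as $g t_i g^{-1}$ on $g(\UTp{i})$ and as an arbitrary linear lift on the complement. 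By the previous paragraph these are linear sections, and by construction they satisfy both commutativity conditions. Descending to $i=0$ finishes the first diagram. There $\Ker{N_\Vd^0}=0$, so $s_0$ is forced on $g(\UTp1)$ and is otherwise an arbitrary splitting, which causes no problem.

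The second diagram is handled by the same argument with the roles of $(\Vd,f)$ and $(\Ud,g)$ exchanged, using the second chain in Lemma~\ref{jan7-1}. The decompositions (\ref{ap1-2}) then follow at once: $\Ker{fN_\Vd^i}=\Ker{N_\Vd^i}\oplus s_i(\VT{i})$ and $\Ker{N_\Ud^i}=\Ker{gN_\Ud^{i-1}}\oplus t_i(\UTp{i})$. The left-column maps $f,g$ restrict to the kernels, and on the right summands they agree with the induced maps by the two commutativity conditions.

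The only point needing care, and the one I expect to be the main obstacle, is the compatibility step. One must check that the prescribed value of $t_i$ on $f(\VT{i})$ really lands in $\Ker{N_\Ud^i}$ and lifts the right classes, and likewise for $s_{i-1}$ on $g(\UTp{i})$. This is exactly what the identities $fN_\Vd^i=N_\Ud^if$ and $fN_\Vd^{i-1}g=N_\Ud^i$ give. One must also check that the prescriptions do not conflict, which is guaranteed by the injectivity in Lemma~\ref{jan7-1} and by defining each section on a direct sum decomposition.
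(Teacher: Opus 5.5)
Your proposal is correct and follows the paper's own proof. Both arguments use descending induction from $i_0$ with $N_\Vd^{i_0}=N_\Ud^{i_0}=0$: each new section is defined on the image summand $g(\UTp{i})$ (resp.\ $f(\VT{i})$) of the splittings (\ref{mar4-6}) by transporting the previous section along $g$ (resp.\ $f$), and arbitrarily on the complementary summand $\Vd_{i-1}$ (resp.\ $\Ud_i'$). One small point: at $i=0$ the section of $\Ker{f}\to\VT{0}$ is actually unique, since $\Ker{N_\Vd^0}=0$, so nothing is arbitrary at that step, which is harmless.
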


\begin{proof}
Fix decompositions (\ref{mar4-6}).
We take $i_0\geq 1$ such that $N_\Vd^{i_0}=0$ and $N_\Ud^{i_0}=0$.
In particular, we have 
${\VT{i_0}}=0$,
${\VTp{i_0+1}}=0$,
${\UT{i_0}}=0$ and
${\UTp{i_0+1}}=0$.
We choose a section $s\colon {\UTp{i_0}}\to \Ker{N_\Ud^{i_0}}$ of $\Ker{N_\Ud^{i_0}}\to {\UTp{i_0}}$.
Moreover, we choose an arbitrary section $t\colon \VT{\Iz-1}\to \Ker{fN_\Vd^{\Iz-1}}$ of $\Ker{fN_\Vd^{\Iz-1}}\to \VT{\Iz-1}$.
Then,
using the decomposition 
$\VT{\Iz-1}\simeq g(\UTp{\Iz})\oplus \Vd_{\Iz-1}$,
we define a modified section $\wt{t}\colon \VT{\Iz-1}\to \Ker{fN_\Vd^{\Iz-1}}$ as
\[\VT{\Iz-1}\simeq g(\UTp{\Iz})\oplus \Vd_{\Iz-1}
\simeq  \UTp{\Iz}\oplus \Vd_{\Iz-1}
\xrightarrow{(gs,t)}
\Ker{fN_\Vd^{\Iz-1}}.\]
By the construction, we obtain a commutative diagram:
\[
\xymatrix{\Ker{N_\Ud^{\Iz}}\ar[d]^g& \UTp{\Iz}\ar[d]^g\ar[l]_-{s}\\
\Ker{fN_\Vd^{\Iz-1}}& \VT{\Iz-1}\ar[l]_-{\wt{t}}}
\]
By repeating this procedure, we obtain the desired sections.
\end{proof}

\begin{rem}
By the proof of Lemma~\ref{mar4-8},
after choosing decompositions~(\ref{mar4-6}) for all $i$
and 
arbitrary sections of the quotient maps appearing in Lemma~\ref{mar4-8},
we can modify these sections inductively so that the diagrams obtained by adding these sections commute.
This construction will be used in Section~\ref{sectionmhm}.
\end{rem}

\begin{cor}\label{ap3-3-c}%corからlemにyapparicorに
    We fix decompositions as in (\ref{mar4-6}) and (\ref{ap1-2}).
We regard
$\VT{i}$ and $\Vd_i$ (resp. $\UTp{i}$ and $\Ud_i'$) as subspaces of $\Vd$ (resp. $\Ud$).
Then, we obtain the following commutative diagram:
\begin{equation}
    \label{ap3-2}\begin{aligned}
    \xymatrix{
\Vd\ar[r]^f& \Ud\ar[r]^{g}&\Vd\ar[r]^f&\cdots \ar[r]^f& \Ud\ar[r]^g& \Vd \ar[r]^f& \Ud\\
\VT{i}\ar@{^{(}->}[u]\ar[r]^f&\UTp{i}\ar@{^{(}->}[u]\ar[r]^g&
\VT{i-1}\ar@{^{(}->}[u]\ar[r]^f&\cdots \ar[r]^f&\UTp{1}\ar@{^{(}->}[u]\ar[r]^g&\VT{0}\ar@{^{(}->}[u]\ar[r]^f&0\\
\Vd_{i}\ar@{^{(}->}[u]\ar[r]^f&f(\Vd_i)\ar@{^{(}->}[u]\ar[r]^g&
N_\Vd(\Vd_i)\ar@{^{(}->}[u]\ar[r]^f&\cdots \ar[r]^f&fN_\Vd^{i-1}(\Vd_i)\ar[r]^g\ar@{^{(}->}[u]&N_\Vd^{i}(\Vd_i)\ar@{^{(}->}[u]\ar[r]^f&0
}
\end{aligned}
\end{equation}
where 
\[
f(\Vd_i)\hookrightarrow \UTp{i},\quad
N_\Vd(\Vd_i)\hookrightarrow \VT{i-1},\quad \ldots
\]
are the inclusion maps induced by the inclusions
\[
f(\Vd_i)\subset f(\VT{i}),\quad
N_\Vd(\Vd_i)\subset g(\UTp{i}),\quad  \dots
\]
under the chosen decompositions in (\ref{mar4-6}).
An analogous commutative diagram is obtained from $\VTp{i}$, $\UT{i}$, $\Vd_i'$ and $\Ud_i$.
\end{cor}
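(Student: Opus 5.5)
The plan is to read the diagram (\ref{ap3-2}) as two stacked ladders and check commutativity square by square. The upper ladder compares the subquotients with $\Vd$ and $\Ud$. The lower ladder compares $\Vd_i$ and its successive images with the tilde subquotients. All embeddings into $\Vd$ and $\Ud$ are the sections fixed in Lemma~\ref{mar4-8}. The embeddings of $\Vd_i$, $f(\Vd_i)$, $N_\Vd(\Vd_i),\dots$ into the tilde spaces are the ones determined by the decompositions (\ref{mar4-6}).

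\emph{Upper ladder.} We regard $\VT{k}\subset \Ker{fN_\Vd^{k}}\subset\Vd$ and $\UTp{k}\subset \Ker{N_\Ud^{k}}\subset \Ud$ via the sections of Lemma~\ref{mar4-8}. The squares involving $f\colon \VT{k}\to\UTp{k}$ and $g\colon \UTp{k}\to \VT{k-1}$ then commute with the ambient $f,g$, because this is exactly the commutativity of the diagrams (\ref{ap1-2}). That is, the chosen sections intertwine the maps induced on subquotients (Lemma~\ref{jan7-1}) with the original $f$ and $g$. For the last square, note that $\VT{0}=\Ker{f}/\Ker{\Id_\Vd}=\Ker f$. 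Hence $f$ restricted to its image in $\Vd$ is zero, which matches the arrow $\VT{0}\to 0$.

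\emph{Lower ladder.} By Lemma~\ref{jan7-1}(2), all maps in the chain $\VT{i}\xrightarrow{f}\UTp{i}\xrightarrow{g}\VT{i-1}\to\cdots$ are injective. Therefore $f$ restricts to an isomorphism $\Vd_i\simto f(\Vd_i)$, and similarly at each later step. So the bottom row is well defined, and each successive image lies in the image of the previous tilde space. Concretely, $f(\Vd_i)\subset f(\VT{i})$, which is a direct summand of $\UTp{i}$ by (\ref{mar4-6}); next $N_\Vd(\Vd_i)=gf(\Vd_i)\subset g(\UTp{i})$, a summand of $\VT{i-1}$; and so on. The vertical inclusions are the composites ``inclusion into the summand, then inclusion of the summand''. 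With this description, each lower square commutes tautologically: the map in the bottom row is the restriction of the middle-row map to a subspace, and the middle-row map sends that subspace into the named summand. In the final square, $N_\Vd^{i}(\Vd_i)\subset \VT{0}=\Ker f$, so $f$ is zero there as well.

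The only real point, and the step I expect to need care, is that the upper squares commute on the nose in $\Vd$ and $\Ud$ rather than merely modulo kernels. This is precisely what the inductive modification of the sections in the proof of Lemma~\ref{mar4-8} provides, so it suffices to invoke that lemma rather than arbitrary sections. The analogous diagram for $\VTp{i}$, $\UT{i}$, $\Vd_i'$ and $\Ud_i$ follows by the same argument, using the second diagram in (\ref{ap1-2}) and the second chain in Lemma~\ref{jan7-1}. The final vanishing there uses $\UT{0}=\Ker g$.
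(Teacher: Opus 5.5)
Your proposal is correct and follows the paper's route. The paper's proof is the single sentence that the corollary follows immediately from Lemma~\ref{mar4-8}. Your argument spells that sentence out. The upper squares commute because of the compatible sections in (\ref{ap1-2}). The lower squares are tautological once $\Vd_i$ is placed inside $\VT{i}$ via (\ref{mar4-6}) and the induced maps of Lemma~\ref{jan7-1} are known to be injective.
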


\begin{proof}
    This immediately follows from Lemma~\ref{mar4-8}.
\end{proof}

By this corollary, 
the image $f(\Vd_i)$ of $\Vd_i$ under the induced morphism
$f\colon \VT{i}\to \UTp{i}$ is identified,
via the chosen decompositions (\ref{mar4-6}) and (\ref{ap1-2}),
with the subspace of $\Ud$ obtained by regarding $\Vd_i$ as a subspace of $\Vd$
and applying
$f\colon \Vd\to \Ud$.
Hence, we shall not distinguish between these two subspaces in what follows.
The same convention applies to $N_\Vd(\Vd_i)$, $g(\Ud_i)$, and similar subspaces.

\begin{lem}\label{ap8-3}%\say{これはdirect sumなので無限和にしないでどこかでとめる。}
    We fix decompositions as in (\ref{mar4-6}) and (\ref{ap1-2}).
As in Corollary~\ref{ap3-3-c},
we regard $\bigoplus_{r=0}^iN_\Vd^r(\Vd_i)$
and $\bigoplus_{r=0}^{i-1}fN_\Vd^r(\Vd_i)$
as subspaces of $\Vd$ and $\Ud$,
respectively,
with the convention that $\bigoplus_{r=0}^{-1}=0$.
Then, we obtain a subdiagram of (\ref{jan1-1})
\begin{align*}
    \xymatrix{    \displaystyle\bigoplus_{r= 0}^i N_\Vd^r(\Vd_i)\ar@<0.5ex>[r]^-{f}&
    \displaystyle\bigoplus_{r= 0}^{i-1}fN_\Vd^r (\Vd_i)
    \ar@<0.5ex>[l]^-{g}}.
\end{align*}
Moreover, this is a direct summand of (\ref{jan1-1}).
The same assertions hold for $\Vd_i'$, $\Ud_i$ and $\Ud_i'$.
\end{lem}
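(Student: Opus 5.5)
The plan is to prove the two assertions separately: first that the displayed pair of subspaces is a subdiagram whose sums are direct, and then that it admits a complementary subdiagram. Throughout we use the identifications of Corollary~\ref{ap3-3-c}: $\Vd_i$ is embedded in $\VT{i}\subset\Ker fN_\Vd^i$ through the chosen sections, and $N_\Vd^r(\Vd_i)$ and $fN_\Vd^r(\Vd_i)$ are the corresponding images.

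\emph{Stability under $f$ and $g$.} By definition, $f$ maps $N_\Vd^r(\Vd_i)$ onto $fN_\Vd^r(\Vd_i)$, and $g$ maps $fN_\Vd^r(\Vd_i)$ onto $N_\Vd^{r+1}(\Vd_i)$. Moreover $fN_\Vd^i(\Vd_i)=0$, because $\Vd_i\subset\Ker fN_\Vd^i$. Hence the displayed pair is stable under $f$ and $g$.

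\emph{Directness.} Suppose $\sum_{r=0}^i N_\Vd^r v_r=0$ with $v_r\in\Vd_i$, and let $r_0$ be minimal with $v_{r_0}\neq0$. Apply $N_\Vd^{i-r_0}$. For $r>r_0$ the term becomes $N_\Vd^{i+r-r_0}v_r$, which vanishes: since $fN_\Vd^i v_r=0$ we have $N_\Vd^{i+1}v_r=gfN_\Vd^iv_r=0$. We are left with $N_\Vd^iv_{r_0}=0$, i.e.\ $v_{r_0}\in\Ker N_\Vd^i$. But the section $\wt{\Vd_i}\to\Ker fN_\Vd^i$ meets $\Ker N_\Vd^i$ only in $0$, so $v_{r_0}=0$, a contradiction. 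The same argument, applying $N_\Ud^{i-1-r_0}$, shows that $\bigoplus_{r=0}^{i-1} fN_\Vd^r(\Vd_i)$ is direct. Equivalently, the maps in the chains of Lemma~\ref{jan7-1} are injective, and the pieces $N_\Vd^r(\Vd_i)$ land in the distinct layers $\VT{i-r}$ and $\UTp{i-r}$.

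\emph{Direct summand.} Rather than build a complement for this one block, I would decompose the whole diagram into such blocks at once.
\begin{enumerate}
\item Iterate the decompositions (\ref{mar4-6}). Since $\UTp{j}\simeq f(\VT{j})\oplus\Ud_j'$ and $\VT{j}\simeq g(\UTp{j+1})\oplus \Vd_j$, an induction starting from $i_0$, where the relevant spaces vanish, gives
\[
\VT{j}=\bigoplus_{k\ge0}N_\Vd^k(\Vd_{j+k})\oplus\bigoplus_{k\ge1}N_\Vd^{k-1}g(\Ud'_{j+k}).
\]
Similar formulas hold for $\UTp{j}$, $\VTp{j}$ and $\UT{j}$, with blocks generated by $\Vd_j,\Vd'_j,\Ud_j,\Ud'_j$.
\item Through the sections of Lemma~\ref{mar4-8}, use the splittings $\Ker fN_\Vd^j\simeq\Ker N_\Vd^j\oplus\VT{j}$ and $\Ker N_\Vd^{j}\simeq\Ker fN_\Vd^{j-1}\oplus\VTp{j}$, together with their $\Ud$-analogues. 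Together with $\Ker N_\Vd^{i_0}=\Vd$, these exhibit $\Vd$ as the direct sum of all the lifted layers $\VT{j}$ and $\VTp{j}$, and likewise for $\Ud$.
\item Combining the two steps, $\Vd$ and $\Ud$ become direct sums of the blocks $N_\Vd^r(\Vd_j)$, $fN_\Vd^r(\Vd_j)$, and the analogous blocks generated by $\Vd'_j,\Ud_j,\Ud'_j$.
\item By the commutativity of (\ref{ap1-2}) and (\ref{ap3-2}), $f$ and $g$ map each block into the next block of the same string, so every string is a subdiagram. The complement of the given block is then the sum of all the other strings, which is itself a subdiagram.
\end{enumerate}

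The same argument, with the roles of $(\Vd,f)$ and $(\Ud,g)$ and of primed and unprimed spaces exchanged, handles $\Vd_i'$, $\Ud_i$ and $\Ud_i'$.

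The main obstacle is step~4, the compatibility claim. We must check that $f$ applied to the lift of $N_\Vd^r(\Vd_j)$ equals the lift of $fN_\Vd^r(\Vd_j)$ on the nose in $\Ud$, and not merely modulo lower kernels. This is exactly where the inductively modified sections of Lemma~\ref{mar4-8} are needed. I would verify it by induction on $r$, using the commutative squares $s$, $\wt t$ from that proof.
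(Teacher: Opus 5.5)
Your proof is correct and uses the same mechanism as the paper. The splittings (\ref{mar4-6}) are iterated inside each layer $\wt{\Vd_j}$, $\wt{\Ud'_j}$ (and the primed/unprimed analogues) and lifted into $\Vd$ and $\Ud$ through the commuting sections of Lemma~\ref{mar4-8}, which is exactly what identifies the lifted pieces with the literal images $N_\Vd^r(\Vd_i)$ and $fN_\Vd^r(\Vd_i)$. The difference is only organizational: the paper peels off the single block layer by layer (``repeating this procedure''), whereas you decompose the whole diagram into all strings at once, in effect proving Corollary~\ref{ap8-4} first and making explicit that the complement, the sum of the other strings, is itself a subdiagram; your separate directness argument is then redundant but harmless.
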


\begin{proof}
%By the previous lemma,
%the morphism
%$
%N_{\Vd}^\ell(\Vd_i)\to fN_{\Vd}^\ell(\Vd_i)$
%(resp. $fN_{\Vd}^\ell(\Vd_i)\to N_{\Vd}^\ell(V_{i-1})$)
%in the diagram is the restriction of
%$f\colon V\to W$ (resp. $g\colon W\to V$),
%where both sides are regarded as subspaces of $V$ and $W$.

We may assume $i\geq 1$.
Applying $f$ to the decomposition
$\VT{i}\simeq g(\UTp{i+1})\oplus \Vd_i$,
we have
\begin{align}\label{ap7-1}
    f(\VT{i})\simeq N_\Ud(\UTp{i+1})\oplus f(\Vd_i)
\end{align}
and hence
\[\UTp{i}\overset{(\ref{mar4-6})}{\simeq} \Ud'_i\oplus f(\VT{i})
\simeq \Ud'_i\oplus N_\Ud(\UTp{i+1})\oplus f(\Vd_i).
\]
Together with (\ref{ap1-2}) and Corollary~\ref{ap3-3-c}, 
\[ \xymatrix{\Vd_i\ar[r]^-f&f(\Vd_i)}\]
is a direct summand of  
\[ \xymatrix{\Vd\ar[r]^-f&\Ud}\]
as a diagram.

Similarly, applying $g$ to (\ref{ap7-1})
and $\UTp{i}\simeq f(\VT{i})\oplus \Ud'_i$,
we have
\begin{align*}
    N_\Vd(\VT{i})&\simeq gN_\Ud(\UTp{i+1})\oplus N_\Vd(\Vd_i),\mbox{and}\\
        g(\UTp{i})&\simeq N_\Vd(\VT{i})\oplus g(\Ud'_i).
\end{align*}
Therefore, 
we have
\[
\VT{i-1}\overset{(\ref{mar4-6})}{\simeq} \Vd_{i-1}\oplus g(\UTp{i})
\simeq \Vd_{i-1}\oplus gN_\Ud(\UTp{i+1})\oplus N_\Vd(\Vd_i) \oplus g(\Ud'_i).
\]
Together with (\ref{ap1-2}) and Corollary~\ref{ap3-3-c}, 
\[ \xymatrix{N_\Vd(\Vd_i)& \ar[l]_-g f(\Vd_i)}\]
is a direct summand of 
\[ \xymatrix{\Vd& \ar[l]_-g \Ud}\]
as a diagram.

Repeating this procedure, we obtain the desired assertion.
\end{proof}

The following corollary follows immediately from Lemma~\ref{ap8-3}.
\begin{cor}\label{ap8-4}
We have the following direct sum decomposition of the diagram (\ref{jan1-1}):
\begin{equation}
    \label{ap8-6}
\begin{aligned}
 &\bigoplus_{i\geq 0}\left(   \xymatrix{ \displaystyle\bigoplus_{r=0}^{i}N_\Vd^r(\Vd_i)\ar@<0.5ex>[r]^-{f}&
    \displaystyle\bigoplus_{r=0}^{i-1}fN_\Vd^r (\Vd_i)
    \ar@<0.5ex>[l]^-{g}
    }\right)\\
\oplus  &\displaystyle\bigoplus_{i\geq 1}\left(   \xymatrix{ \displaystyle\bigoplus_{r= 0}^{i-1}N_\Vd^r(\Vd_i')\ar@<0.5ex>[r]^-{f}&
    \displaystyle\bigoplus_{r= 0}^{i-1}fN_\Vd^r (\Vd_i')
    \ar@<0.5ex>[l]^-{g}
    }\right)\\
    \oplus 
&\bigoplus_{i\geq 0}   
\left(    \xymatrix{
 \displaystyle\bigoplus_{r=0}^{i-1}gN_\Ud^r(\Ud_i)\ar@<0.5ex>[r]^-{f}&
    \displaystyle\bigoplus_{r=0}^iN_\Ud^r (\Ud_i)
    \ar@<0.5ex>[l]^-{g}}\right)\\
    \oplus 
    &\bigoplus_{i\geq 1}   
\left(    \xymatrix{\displaystyle
 \bigoplus_{r= 0}^{i-1}gN_\Ud^r(\Ud_i')\ar@<0.5ex>[r]^-{f}&
    \displaystyle\bigoplus_{r= 0}^{i-1}N_\Ud^r (\Ud_i')
    \ar@<0.5ex>[l]^-{g}}\right).
\end{aligned}
\end{equation}

Here, only finitely many summands indexed by $i$ are non-zero.
\end{cor}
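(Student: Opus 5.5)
The plan is to show that the four families of subdiagrams from Lemma~\ref{ap8-3} are jointly independent and together fill out $\Vd$ and $\Ud$. By Lemma~\ref{ap8-3}, each family is a subdiagram compatible with $f$ and $g$. So only two things need checking: the sum of all these subspaces is direct, and it exhausts $\Vd$ and $\Ud$. Finiteness of the number of nonzero summands is immediate. If $N_\Vd^{i_0}=0$ and $N_\Ud^{i_0}=0$, then $\VT{i}$, $\UT{i}$, $\VTp{i+1}$, $\UTp{i+1}$ vanish for $i\geq i_0$, and hence so do $\Vd_i$, $\Ud_i$, $\Vd'_{i+1}$, $\Ud'_{i+1}$.

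For exhaustion and directness, I will filter $\Vd$ by the increasing chain
\[0=\Ker f N_\Vd^{-1}\subset \Ker N_\Vd^0\subset \Ker fN_\Vd^0\subset \Ker N_\Vd^1\subset \Ker fN_\Vd^1\subset\cdots,\]
with the convention $\Ker fN_\Vd^{-1}=0$. The successive quotients of this chain are exactly $\VTp{i}$ and $\VT{i}$. I will use the analogous chain for $\Ud$, whose successive quotients are $\UTp{i}$ and $\UT{i}$. The fixed decompositions (\ref{ap1-2}) embed these graded pieces as complements, giving
\[\Vd\simeq\bigoplus_{i}\VT{i}\oplus\bigoplus_i\VTp{i}\]
and the corresponding splitting of $\Ud$.

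Next I will unfold each graded piece using (\ref{mar4-6}) iteratively, exactly as in the proof of Lemma~\ref{ap8-3}. For example,
\[\VT{i}\simeq \Vd_i\oplus g(\UTp{i+1})\simeq \Vd_i\oplus g(\Ud'_{i+1})\oplus N_\Vd(\VT{i+1}),\]
and the last term unfolds again. By descending induction from $i_0$, $\VT{i}$ decomposes as
\[\Vd_i\oplus \bigoplus_{r\geq1}N_\Vd^r(\Vd_{i+r})\oplus\bigoplus_{r\geq 0}gN_\Ud^r(\Ud'_{i+1+r}).\]
Likewise, $\VTp{i}$ decomposes into the pieces $N_\Vd^r(\Vd'_{i+r})$ and $gN_\Ud^r(\Ud_{i+r})$. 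The same procedure applies to $\UT{i}$ and $\UTp{i}$. Corollary~\ref{ap3-3-c} guarantees that these identifications are compatible with the embeddings into $\Vd$ and $\Ud$.

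Collecting terms, each summand $N_\Vd^r(\Vd_i)$, $fN_\Vd^r(\Vd_i)$, and so on appears exactly once in the direct sum decomposition of $\Vd$ or $\Ud$. This simultaneously gives directness and exhaustion, and regrouping by the generator index $i$ gives (\ref{ap8-6}).

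The main obstacle is the bookkeeping. Every piece must land in exactly one graded slot, with indices matching the ranges $r\le i$ and $r\le i-1$ in (\ref{ap8-6}). For instance, $N_\Vd^r(\Vd_i)$ must sit in $\VT{i-r}$, and $fN_\Vd^r(\Vd'_i)$ must sit in $\UT{i-1-r}$. The inclusions in Lemma~\ref{jan7-1}(2) give exactly these placements. I will also check the boundary cases: $\UTp{1}$ maps into $\VT{0}$, and $f$ kills $\VT{0}=\Ker f$, so each chain terminates with the correct zero map.
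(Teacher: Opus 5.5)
Your proposal is correct and follows essentially the paper's route. The paper obtains the corollary ``immediately'' from Lemma~\ref{ap8-3}: it unfolds the decompositions (\ref{mar4-6}) along the injections of Lemma~\ref{jan7-1}, using the compatible sections of Lemma~\ref{mar4-8} and Corollary~\ref{ap3-3-c}; your filtration $0=\Ker N_\Vd^0\subset \Ker f\subset \Ker N_\Vd^1\subset\cdots$, split by (\ref{ap1-2}), simply makes explicit the directness and exhaustion that the paper leaves implicit.
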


For $x\in \Vd_i$, we denote by $\La x\Ra$ the subspace of $\Vd$ generated by $x$.
Then, by the previous corollary, 
the diagram
\[\xymatrix{
 \bigoplus_{r=0}^iN_\Vd^r\La x\Ra\ar@<0.5ex>[r]^-{f}&
    \bigoplus_{r=0}^{i-1}fN_\Vd^r \La x\Ra
    \ar@<0.5ex>[l]^-{g}}
\]
is a direct summand of (\ref{jan1-1}) as a diagram.
We denote it by $\diag(x)$.
We use the same notation for vectors in $\Vd_i'$, $\Ud_i$ and $\Ud_i'$.

\begin{cor}\label{ap7-2}
The diagram (\ref{jan1-1}) is isomorphic to a direct sum of diagrams of the form $\diag(x)$,
where $x$ ranges over the elements of chosen bases of the spaces
$\Vd_i$, $\Vd_i'$, $\Ud_i$, and $\Ud_i'$ for all $i$.
\end{cor}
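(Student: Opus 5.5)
The plan is to refine the decomposition of Corollary~\ref{ap8-4} summand by summand, using a basis of each of the spaces $\Vd_i$, $\Vd_i'$, $\Ud_i$, $\Ud_i'$. Fix the decompositions (\ref{mar4-6}) and (\ref{ap1-2}) once and for all, so that Corollary~\ref{ap3-3-c} and Lemma~\ref{ap8-3} apply. By Corollary~\ref{ap8-4}, (\ref{jan1-1}) is the direct sum of the four families of subdiagrams attached to $\Vd_i$, $\Vd_i'$, $\Ud_i$, $\Ud_i'$. It therefore suffices to show that each such subdiagram is the direct sum of the diagrams $\diag(x)$, where $x$ runs over a basis of the relevant space. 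I treat the summand attached to $\Vd_i$; the other three cases are identical after changing the lengths of the chains.

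Fix a basis $x_1,\dots,x_m$ of $\Vd_i$, viewed inside $\Vd$. Each piece $N_\Vd^r(\Vd_i)$ is the image of $\Vd_i$ under $N_\Vd^r$, and each piece $fN_\Vd^r(\Vd_i)$ is its image under $fN_\Vd^r$. So it is enough to show that these restricted maps are injective on $\Vd_i$ for $0\le r\le i$ and $0\le r\le i-1$, respectively. Injectivity then gives
\[N_\Vd^r(\Vd_i)=\bigoplus_{k}N_\Vd^r\La x_k\Ra\quad\text{and}\quad fN_\Vd^r(\Vd_i)=\bigoplus_k fN_\Vd^r\La x_k\Ra.\]
Since $f$ and $g$ send the chain generated by $x_k$ into itself, the subdiagram splits as $\bigoplus_k\diag(x_k)$.

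The injectivity comes from the chain of inclusions in Lemma~\ref{jan7-1}(2): $\VT{i}\hookrightarrow\UTp{i}\hookrightarrow\VT{i-1}\hookrightarrow\cdots\hookrightarrow\VT{0}$, where each arrow is induced by $f$ or $g$. By Corollary~\ref{ap3-3-c}, under the chosen sections these induced maps agree with the genuine maps $f$ and $g$ on the embedded subspaces. Hence $N_\Vd^r$ and $fN_\Vd^r$ restricted to $\Vd_i\subset\VT{i}$ are composites of injective maps, and so are injective. Here it matters that $\Vd_i\subset\VT{i}$ is embedded through the section of Lemma~\ref{mar4-8}, rather than as an arbitrary lift.

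The only delicate point is this identification of the abstract injections on subquotients with the actual maps restricted to subspaces of $\Vd$ and $\Ud$, which is exactly the content of Corollary~\ref{ap3-3-c}. Once that is used, the remaining work is bookkeeping. Collecting the $\diag(x)$ over the chosen bases of all $\Vd_i,\Vd_i',\Ud_i,\Ud_i'$ gives the claimed isomorphism. Only finitely many $i$ contribute, because $N_\Vd$ and $N_\Ud$ are nilpotent.
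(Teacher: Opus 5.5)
Your proposal is correct and follows essentially the paper's route. The paper reads the corollary off directly from the decomposition in Corollary~\ref{ap8-4} by splitting each summand along a basis of $\Vd_i$, $\Vd_i'$, $\Ud_i$, $\Ud_i'$, and your injectivity check makes explicit what the paper leaves implicit. One small correction: that injectivity does not depend on the compatible sections of Lemma~\ref{mar4-8}, since for $y\in\Ker fN_\Vd^i$ either $N_\Vd^r y=0$ with $r\le i$ or $fN_\Vd^r y=0$ with $r\le i-1$ already forces $y\in\Ker N_\Vd^i$; the compatible choice is needed only for the direct-summand property, which Corollary~\ref{ap8-4} already supplies.
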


\begin{defi}\label{sep1-1}
\begin{enumerate}
    \item For each $i\geq 1$,
we say that a diagram is of type $A_i$ (resp. $B_i$, $C_i$, $D_i$) if it is isomorphic to $\diag(x)$ for some non-zero vector $x$ in $\Ud_i'$ (resp. $\Vd_i'$, $\Ud_{i-1}$, $\Vd_i$).
We define type $D_0$ similarly, using a non-zero vector $x\in \Vd_0$.
See the diagrams of types $A_i$, $B_i$, $C_i$, and $D_i$ below.

\item By abuse of notation, we denote by $A_i$ (resp. $B_i$, $C_i$, $D_i$) the (unipotent and monodromic) perverse sheaf on $\CC_t$ corresponding to a diagram of type $A_i$ (resp. $B_i$, $C_i$, $D_i$).
\end{enumerate}

\end{defi}

The indexing is chosen so that assertion \ref{mar3-4} in Proposition~\ref{mar3-2} holds.
The diagrams of these types are as follows.

\begin{center}
\centering

\begin{minipage}{0.24\textwidth}
\centering
\parbox[c][5.0cm][c]{\linewidth}{%
\[
\xymatrix@R=1pt{
&\bullet_{x}\ar[ld]\\
\bullet_{gx}\ar[dr]&\\
&\bullet_{Nx}\ar[ld]\\
\bullet_{gNx}\ar[dr]&\\
&\bullet_{N^2x}\\
\ar@{}[r]|-{\dots}&\\
&\bullet_{N^{i-1}x}\ar[dl]\\
\bullet_{gN^{i-1}x}\ar@{.>}[rd]&\\
&0\\
}
\]
}
\par
{\small Type $A_i$}
\end{minipage}
\hfill
\begin{minipage}{0.24\textwidth}
\centering
\parbox[c][5.0cm][c]{\linewidth}{%
\[
\xymatrix@R=1pt{
\bullet_{x}\ar[dr]&\\
&\bullet_{fx}\ar[ld]\\
\bullet_{Nx}\ar[dr]&\\
&\bullet_{fNx}\ar[ld]\\
\bullet_{N^2x}\ar@{}[rd]&\\
\ar@{}[r]|-{\dots}&\\
\bullet_{N^{i-1}x}\ar[rd]&\\
&\bullet_{fN^{i-1}x}\ar@{.>}[ld]\\
0&\\
}
\]
}
\par
{\small Type $B_i$}
\end{minipage}
\hfill
\begin{minipage}{0.24\textwidth}
\centering
\parbox[c][5.0cm][c]{\linewidth}{%
\[
\xymatrix@R=1pt{
&\bullet_x\ar[ld]\\
\bullet_{gx}\ar[dr]&\\
&\bullet_{Nx}\ar[ld]\\
\bullet_{gNx}\ar[dr]&\\
&\bullet_{N^2x}\ar@{}[ld]|-{\dots}\\
\bullet_{gN^{i-2}x}\ar[rd]&\\
&\bullet_{N^{i-1}x}\ar@{.>}[ld]\\
0&\\
}
\]
}
\par
{\small Type $C_i$}
\end{minipage}
\hfill
\begin{minipage}{0.24\textwidth}
\centering
\parbox[c][5.0cm][c]{\linewidth}{%
\[
\xymatrix@R=1pt{
\bullet_{x}\ar[dr]&\\
&\bullet_{fx}\ar[ld]\\
\bullet_{Nx}\ar[dr]&\\
&\bullet_{fNx}\ar[ld]\\
\bullet_{N^2x}\ar@{}[rd]|-{\dots}&\\
&\bullet_{fN^{i-1}x}\ar[ld]\\
\bullet_{N^ix}\ar@{.>}[rd]&\\
&0
}
\]
}
\par
{\small Type $D_i$}
\end{minipage}

%\caption{}
%\label{fig:four}
\end{center}

We write $\taik_\CC$ for the constant sheaf, and $\taik_0$ for the skyscraper sheaf at $0$.
We write $\taik_{\CS}$ for the constant sheaf on $\CS$ and, by the same symbol, for its zero extension to $\CC$.
Let $\calL_i$ be a local system on $\CS$ whose monodromy has a single Jordan block of size $i$ with eigenvalue $1$.
We denote by $j\colon \CS\hookrightarrow \CC$ the inclusion.
Then, from the definitions, we have the following.

\begin{prop}\label{mar3-2}
\begin{enumerate}
    \item $A_1\simeq \taik_{\CS}[1]$,\ $B_1\simeq \Derived j_*\taik_\CS[1](=\Derived\Gamma_{\CS}\taik_\CC[1])$, $C_1\simeq \taik_{\CC}[1]$, $D_0\simeq \taik_0$,
    \item $A_i\simeq j_!\calL_i[1]$, $B_i\simeq \Derived j_*\calL_i[1]\;(=\Derived\Gamma_{\CS}(j_{!}\calL_i[1]))$,
    \item $C_i\simeq j_{!*}\calL_i[1]$, where $j_{!*}\calL_i[1]$ denotes the minimal extension of $\calL_i[1]$,
    \item $D_i\simeq \cokaa(\taik_\CC[1]\to \Derived j_*\calL_{i+1}[1])\simeq \kaa(j_!\calL_{i+1}[1]\to \taik_\CC[1])$.
\item \label{mar3-4}Moreover, for $X\in \{A,B,C,D\}$,
we have
\[X_i|_{\CS}\simeq \calL_i[1],\]
except for $D_0(=\taik_0)$, in which case the restriction is zero.
\end{enumerate}

\end{prop}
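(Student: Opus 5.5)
The plan is to compute, for each of the four explicit perverse sheaves on the right-hand sides, the diagram $({}^p\phi_tK\rightleftarrows {}^p\psi_tK)$ of (\ref{june11-4}). I then match it with the diagrams of types $A_i,B_i,C_i,D_i$ drawn above. Since $\Perv_{\mon,\unip}(\taik_{\CC_t})$ is equivalent to the category of such diagrams, an isomorphism of diagrams gives an isomorphism of perverse sheaves.

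\emph{Basic dictionary.} Throughout, ${}^p\psi_tK$ is the stalk of the local system $K|_{\CS}[-1]$ with its monodromy $T$, and $vc=T-1$ up to sign, which is nilpotent. In terms of diagrams:
\begin{itemize}
\item $j_!\calL[1]$ is characterized by $c$ being an isomorphism;
\item $\Derived j_*\calL[1]$ is characterized by $v$ being an isomorphism;
\item $j_{!*}\calL[1]$ is characterized by $c$ surjective and $v$ injective, i.e. $\Vd=\Ima c\simeq \Ud/\Ker(T-1)$;
\item skyscrapers at $0$ correspond to $\Ud=0$.
\end{itemize}

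\emph{Parts (1)–(3).} For $A_i$, the drawn diagram has $\Ud=\langle x,Nx,\dots,N^{i-1}x\rangle$ with $N=fg$ a single Jordan block, and $g$ is bijective onto $\Vd$. Hence it is $j_!$ of the local system with one Jordan block of size $i$, i.e. $j_!\calL_i[1]$. For $B_i$, $f$ is bijective and $N_\Ud$ is a single Jordan block, giving $\Derived j_*\calL_i[1]$. For $C_i$, $\Ud$ has dimension $i$ with $N$ a single block, $g$ is surjective with kernel $\langle N^{i-1}x\rangle=\Ker N$, and $f$ is injective. This is exactly the minimal extension. The case $i=1$ of each gives (1): $A_1=\taik_{\CS}[1]$, $B_1=\Derived j_*\taik_{\CS}[1]$, and $C_1$ has $\Vd=0$, so it is $\taik_\CC[1]$. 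Also $D_0$ has $\Ud=0$ and $\Vd=\taik$, so it is $\taik_0$. The identifications $\Derived j_*=\Derived\Gamma_{\CS}\circ j_!$ are standard.

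\emph{Part (4).} I would compute $D_i$ via the exact sequence $0\to\taik_\CC[1]\to\Derived j_*\calL_{i+1}[1]\to Q\to 0$. The map is induced by the inclusion $\taik_{\CS}\hookrightarrow\calL_{i+1}$ as the invariants, which is unique up to scalar, and it is injective in $\Perv$ since $\taik_\CC[1]$ is simple. On diagrams, $\taik_\CC[1]$ is $(0\rightleftarrows\taik)$ and maps to $B_{i+1}$ by sending the generator to $\Ker N_\Ud$. Taking the cokernel in the abelian category of diagrams removes the bottom right vertex $fN^ix$ of $B_{i+1}$. What remains is exactly the $D_i$ diagram: left column $x,\dots,N^ix$ of dimension $i+1$, and right column $fx,\dots,fN^{i-1}x$ of dimension $i$. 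The kernel description is dual. I would map $A_{i+1}=j_!\calL_{i+1}[1]\to\taik_\CC[1]$, which is surjective on $\Ud$ onto $\taik=\Ud/\Ima N$ and zero on $\Vd$. The kernel drops the top right vertex $x$ of $A_{i+1}$ and leaves the $D_i$ diagram again. The main point to check is that these maps of diagrams are the ones induced by the adjunction maps. This is the only mildly delicate step, and it reduces to the fact that both maps are nonzero and the relevant Hom spaces are one-dimensional.

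\emph{Part (5).} This is immediate from the dictionary. In each case $\Ud$ has dimension $i$ with $N_\Ud$ a single Jordan block, which is the monodromy of $K|_{\CS}$, up to the sign convention $vc=-N$, which does not affect the Jordan type. For $D_0$ we have $\Ud=0$, so the restriction is zero.
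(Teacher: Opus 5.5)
Your proposal is correct and follows the paper's approach: the paper says only that the proposition follows ``from the definitions,'' meaning one computes the diagram $({}^p\phi_t K\rightleftarrows {}^p\psi_t K)$ of each standard perverse sheaf and matches it with the drawn type diagrams, which is what you do. Your handling of part (4) supplies details the paper leaves implicit, namely that the relevant Hom spaces are one-dimensional and that the equivalence with diagrams is exact, so kernels and cokernels can be computed on diagrams.
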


By Corollary~\ref{ap7-2},
we obtain the following.

\begin{prop}\label{aug5-10}
    For any $P\in \mathrm{Perv}_{\mon,\unip}(\taik_{\CC_t})$,
    $P$ can be decomposed as
    \begin{align*}
        P\simeq \bigoplus_{s=1}^{\ell}X_{i(s)}^s,
    \end{align*}
    where $X^s\in \{A,B,C,D\}$ and $i(s)\in \ZZ_{\geq 0}$,
    with $i(s)\geq 1$ unless $X^s=D$.
    \end{prop}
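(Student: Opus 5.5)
The plan is to reduce Proposition~\ref{aug5-10} to the linear-algebraic decomposition already established, via the equivalence between $\Perv_{\mon,\unip}(\taik_{\CC_t})$ and the category of diagrams (\ref{jan1-1}).

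First, by the equivalence recalled in Section~2 (diagram (\ref{june11-4})), we attach to $P$ the diagram
\[\xymatrix@C=40pt{{}^p\phi_{t}P\ar@<0.5ex>[r]^-{\chiisaivar}&{}^p\psi_{t}P\ar@<0.5ex>[l]^-{\Can}},\]
where both composites are nilpotent because the monodromy is unipotent. We set $\Vd={}^p\phi_tP$, $\Ud={}^p\psi_tP$, $f=\chiisaivar$ and $g=\Can$. Since $P$ is recovered from this diagram up to isomorphism, it suffices to decompose the diagram.

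Second, we apply Corollary~\ref{ap7-2}. After fixing the decompositions (\ref{mar4-6}) and (\ref{ap1-2}) and choosing bases of $\Vd_i$, $\Vd_i'$, $\Ud_i$ and $\Ud_i'$, the diagram becomes a finite direct sum of subdiagrams $\diag(x)$. Finiteness holds because $N_\Vd$ and $N_\Ud$ are nilpotent, so these subquotients vanish for large $i$. By Definition~\ref{sep1-1}:
\begin{itemize}
\item $\diag(x)$ with $x\in\Ud_i'$ has type $A_i$;
\item $\diag(x)$ with $x\in\Vd_i'$ has type $B_i$;
\item $\diag(x)$ with $x\in\Ud_{i-1}$ has type $C_i$;
\item $\diag(x)$ with $x\in\Vd_i$ has type $D_i$.
\end{itemize}
The index ranges match the claim. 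The spaces $\Ud_i'$ and $\Vd_i'$ occur for $i\geq1$, giving $A_i$ and $B_i$ with $i\geq1$. The spaces $\Ud_{i-1}$ occur for $i-1\geq0$, giving $C_i$ with $i\geq1$. The spaces $\Vd_i$ occur for $i\geq0$, giving $D_i$ with $i\geq0$. So $i(s)\geq1$ unless $X^s=D$.

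Third, we transport back along the equivalence. An equivalence of abelian categories preserves finite direct sums, so $P\simeq\bigoplus_s X^s_{i(s)}$, where $X^s_{i(s)}$ is the perverse sheaf attached to a diagram of the corresponding type, as in Definition~\ref{sep1-1}(2). Proposition~\ref{mar3-2} then identifies these with the explicit sheaves $j_!\calL_i[1]$, $\DR j_*\calL_i[1]$, and so on. This identification is not needed for the statement as phrased.

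The only real content is the second step, which rests on Lemma~\ref{ap8-3}: each $\diag(x)$ is a direct summand, and together they exhaust the diagram. The one delicate point there is that the pieces are mutually complementary. This follows by iterating the splittings (\ref{mar4-6}) down the filtrations $\Ker N^i$, which exhaust $\Vd$ and $\Ud$. Given that, Proposition~\ref{aug5-10} is immediate.
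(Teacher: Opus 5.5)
Your proposal is correct and follows essentially the same route as the paper. The paper also obtains Proposition~\ref{aug5-10} directly from Corollary~\ref{ap7-2}, using the equivalence between $\Perv_{\mon,\unip}(\taik_{\CC_t})$ and nilpotent diagrams of the form (\ref{jan1-1}), with the types and index ranges read off from Definition~\ref{sep1-1} exactly as you do.
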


\subsection{The case without eigenvalue $1$}\label{aug5-2}

Let $\Perv_{\mon,\neq 1}(\taik_{\CC_t})$
be the full subcategory of $\Perv_{\mon}(\taik_{\CC_t})$
consisting of 
objects $P\in \Perv_{\mon}(\taik_{\CC_t})$ with
$\psi_{t,1}P=\phi_{t,1}P=0$.
Such an object $P$ is the zero extension $j_{!}(P|_{\CS})$ of the restriction $P|_{\CS}$.
Thus, the category $\Perv_{\mon,\neq 1}(\taik_{\CC_t})$ is simpler than $\Perv_{\mon,\unip}(\taik_{\CC_t})$.
This category is equivalent to the category
of pairs $(U,T)$,
where $U$ is a finite-dimensional $\taik$-vector space
and $T$ is an automorphism of $U$ having no eigenvalue $1$.
Therefore, using the rational canonical form of $T$,
we obtain the following.

\begin{prop}\label{june14-1}
For any $P\in \mathrm{Perv}_{\mon,\neq 1}(\taik_{\CC_t})$,
there exist
an integer $\ell\geq 1$ and 
local systems $\calS_i$ on $\CS$ for $1\leq i\leq \ell$
such that the monodromy of each $\calS_i$ is semisimple and has no eigenvalue $1$, and
\[P\simeq \bigoplus_{i=1}^\ell
j_!(\calL_i\otimes \calS_i)[1].
\]
Here, $\calL_i$ denotes the local system introduced in the previous subsection.
\end{prop}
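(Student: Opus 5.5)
The plan is to pass through the equivalence stated just before the proposition. The category $\Perv_{\mon,\neq 1}(\taik_{\CC_t})$ is equivalent to the category of pairs $(U,T)$, where $U$ is a finite-dimensional $\taik$-vector space and $T$ is an automorphism without eigenvalue $1$. Under this equivalence, $P=j_!(P|_{\CS})$ corresponds to the stalk of the local system $P|_{\CS}[-1]$ together with its monodromy. It therefore suffices to decompose $(U,T)$ into indecomposable pieces and to identify each piece with $j_!(\calL_i\otimes\calS_i)[1]$. Note that the index in $\calL_i$ here should be read as a Jordan block size attached to the $i$-th summand rather than the summand index itself. I would either rename it, writing $\calL_{n(i)}$ with $n(i)\geq 1$, or interpret the statement in that way.

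First I decompose $T=T_sT_u$ into its semisimple and unipotent parts; this Jordan–Chevalley decomposition is defined over $\taik$ since $\taik$ is perfect. I then split $U=\bigoplus_{p}U_p$ along the primary decomposition of $T_s$ with respect to the monic irreducible factors $p\neq x-1$ of its minimal polynomial. Each $U_p$ is $T$-stable and is a module over $K_p:=\taik[x]/(p)$ via $T_s$. On $U_p$ the endomorphism $T_u-1$ is nilpotent and $K_p$-linear, because it commutes with $T_s$.

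Next I apply the Jordan normal form for the nilpotent $K_p$-linear map $T_u-1$ on the $K_p$-vector space $U_p$. This writes $U_p\simeq\bigoplus_k K_p^{\,n_k}$, where $T_u$ acts as a single unipotent Jordan block of size $n_k$ and $T_s$ acts as multiplication by the class of $x$. Viewing $K_p^{n_k}=K_p\otimes_\taik\taik^{n_k}$, the pair $(T_s,T_u)$ becomes $(m_x\otimes 1,\ 1\otimes J_{n_k})$, where $m_x$ is multiplication by $x$ on $K_p$ and $J_{n_k}$ is the unipotent Jordan block. Let $\calS$ be the local system on $\CS$ with monodromy $m_x$ on $K_p$; its monodromy is semisimple because $p$ is irreducible, and it has no eigenvalue $1$ because $p\neq x-1$. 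Then the summand is the local system $\calL_{n_k}\otimes\calS$, whose monodromy is the tensor product of the two monodromies. Since $j_!$ and the equivalence are additive, I conclude $P\simeq\bigoplus j_!(\calL_{n_k}\otimes\calS)[1]$ over all pairs $(p,k)$.

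The only point requiring care is the previous step, namely that the unipotent block structure can be taken $K_p$-linearly, so that $T_s$ and $T_u$ split as a tensor product. Over $\taik=\CC$ this is immediate, since $K_p=\CC$ and one just uses the Jordan form. Over $\QQ$ one needs to know that the semisimple part of $T$ is a polynomial in $T$, so that $T_u$ commutes with the $K_p$-structure; this holds because $\QQ$ is perfect. If $P=0$ one takes the empty sum, with the convention on $\ell\geq 1$ adjusted accordingly.
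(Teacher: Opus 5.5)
Your argument is correct, and it reaches the statement by a somewhat different route from the paper.

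\textbf{Comparison of the two routes.} The paper proves the statement in one line (``using the rational canonical form of $T$''). It then re-derives the result another way: it writes $T=T_sT_u$, sets $\wt{N}=\log T_u$, and runs the two-vertex quiver classification of Subsection~\ref{ssunipperv} on $U\rightleftarrows U$ with $f=\wt{N}$ and $g=\iden$. All splittings there are chosen $T_s$-equivariantly via Lemma~\ref{jun13-8}, and the summands $\bigoplus_r \wt{N}^r U_i'$ give $\calS_i$ with monodromy $T_s|_{U_i'}$.

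You instead use the primary decomposition of $T_s$ to make each $U_p$ a vector space over the field $K_p=\mathbf{k}[x]/(p)$. You then take the Jordan form of the $K_p$-linear nilpotent map $T_u-1$ over $K_p$. This is more direct and gains two things:
\begin{enumerate}
\item It makes explicit the factorization $T\simeq m_x\otimes J_n$, which the one-line appeal to the rational canonical form leaves implicit over $\QQ$. The companion block of $p^n$ still has to be identified with $m_x\otimes J_n$, and that uses separability of $p$, i.e.\ exactly the perfectness you flag.
\item It even yields irreducible $\calS$'s.
\end{enumerate}
The paper's route has a different advantage: it is built to be reused in Subsection~\ref{ssclasmhmnonunip}. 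There the same quiver decomposition is lifted to mixed Hodge structures with a finite-order automorphism, and $T$-equivariance of the splittings comes from the averaging in Lemma~\ref{jun13-8}.

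\textbf{The indexing needs no reinterpretation.} The statement is meant literally: $\calL_i$ is the single Jordan block of size $i$, and $\calS_i$ is allowed to be zero. In the paper, $\calS_i$ corresponds to $(U_i',T_s)$. Your decomposition gives this form once you group summands of equal block size, setting $\calS_n:=\bigoplus_{(p,k):\,n_k=n}\calS_p$ and using $\calL_n\otimes(\calS\oplus\calS')\simeq(\calL_n\otimes\calS)\oplus(\calL_n\otimes\calS')$. A direct sum of local systems with semisimple monodromy without eigenvalue $1$ is again of that kind. This also covers $P=0$ by taking $\ell=1$ and $\calS_1=0$, so the convention $\ell\geq 1$ needs no adjustment.
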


For later use in Section~\ref{sectionmhm},
we reinterpret Proposition~\ref{june14-1} from the viewpoint of the classification in the previous subsection.
Let $T=T_sT_u$ be the multiplicative Jordan--Chevalley decomposition of $T$,
where $T_s$ and $T_u$ are its semisimple and unipotent parts, respectively.
Setting $\wt{N}:=\log{T_u}$,
giving the pair $(U,T)$ is equivalent to 
giving a diagram
\begin{align}\label{jun13-2}
    \xymatrix{    (U,T_s)\ar@<0.5ex>[r]^-{\wt{N}}&
(U,T_s)   \ar@<0.5ex>[l]^-{=}}.
\end{align}
After forgetting the $T_s$-action, this is a diagram of the form considered in the previous subsection, with both vertices equal to $U$, $f=\wt{N}$, and $g=\iden$.
In this case, the subquotients $\Vd_i$, $\Vd_i'$, and $\Ud_i$ are all zero.
Only $\Ud_i'$ can be non-zero.

\begin{lem}\label{jun13-8}
Let $\calA$ be an abelian category,
and let
\begin{align}\label{jun13-1}
    0\to A'\to A\to A''\to 0
\end{align}
be an exact sequence in $\calA$.
Assume that $A$, $A'$, and $A''$ are equipped with automorphisms $T$, $T'$, and $T''$, respectively,
such that all the morphisms in (\ref{jun13-1}) are equivariant.
Assume that (\ref{jun13-1}) is split in $\calA$ and
that one of the following conditions holds:
\begin{enumerate}
\item $\calA$ is the category of finite-dimensional vector spaces over a field $\taik$, and $T$ is semisimple.
\item $\calA$ is a $\QQ$-linear category, and $T^m=\iden$ for some $m\geq 1$.
\end{enumerate}
Then,
there exists an isomorphism
$A\simeq A'\oplus A''$ in $\calA$
under which $T$ corresponds to $T'\oplus T''$.
\end{lem}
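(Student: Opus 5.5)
The plan is to build an equivariant section of $A\to A''$ by averaging. Once a $T$-equivariant section $\sigma\colon A''\to A$ exists, the map $A'\oplus A''\to A$ given by (inclusion, $\sigma$) is an isomorphism. It intertwines $T'\oplus T''$ with $T$, because the inclusion is equivariant by hypothesis and $\sigma$ is equivariant by construction. So everything reduces to producing $\sigma$. We fix a section $\sigma_0\colon A''\to A$ of $p\colon A\to A''$ in $\calA$, which exists by the splitness assumption, and modify it.

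In case (2), set
\[\sigma:=\frac{1}{m}\sum_{k=0}^{m-1}T^{k}\sigma_0 (T'')^{-k}.\]
This expression makes sense because $\calA$ is $\QQ$-linear. First, $p\sigma=\frac1m\sum_k (T'')^k p\sigma_0 (T'')^{-k}=\iden$, using $pT=T''p$ and $p\sigma_0=\iden$. Second, $T\sigma(T'')^{-1}$ is the same sum with the index shifted by one. To see that the shift does not change the sum, we need $T^m=\iden$ and also $(T'')^m=\iden$. The latter follows since $p$ is an epimorphism and $(T'')^m p=pT^m=p$. Hence $\sigma$ is equivariant.

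In case (1), I would avoid averaging, since $T$ need not have finite order. Instead, use that $A'\subset A$ is a $T$-stable subspace and $T$ is semisimple. Then $A'$ admits a $T$-stable complement $C$: this is the standard fact that invariant subspaces of a semisimple operator have invariant complements, i.e.\ $A$ is a semisimple $\taik[x]$-module. The restriction $p|_C\colon C\to A''$ is an isomorphism intertwining $T|_C$ and $T''$, and its inverse is the desired $\sigma$. If $\taik$ is not perfect, "semisimple" should be read as "$A$ is a semisimple $\taik[T]$-module", which is exactly what this argument needs. The one fact to check is that $T'$ and $T''$ are automatically compatible, which holds by equivariance of the maps in (\ref{jun13-1}).

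The main subtlety is just the verification that $(T'')^m=\iden$ in case (2), needed for the averaging to be invariant; the rest is routine.
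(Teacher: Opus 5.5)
Your proposal is correct and matches the paper's proof. In case (2) the paper uses exactly the averaged section $\frac{1}{m}\sum_{i=0}^{m-1}T^{-i}s(T'')^{i}$; your explicit check that $(T'')^m=\iden$, via $p$ being an epimorphism, fills in a step the paper leaves implicit. In case (1) the paper simply appeals to the rational canonical form of the semisimple $T$, and your invariant-complement argument is the direct form of that same semisimplicity argument; it also produces a splitting compatible with the inclusion and projection, which is what the paper actually needs when it later chooses the splittings of Lemma~\ref{jan7-2} through Corollary~\ref{ap8-4} $T_s$-equivariantly.
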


\begin{proof}
    If the first condition holds,
    the assertion follows from the
    rational canonical form of the semisimple automorphism $T$.
  
    Suppose that the second condition holds.
Take a section $s\colon A''\to A$ of the morphism $A\to A''$ in (\ref{jun13-1}).
Then, the section 
\[s'=\dfrac{1}{m}\sum_{i=0}^{m-1}T^{-i}s(T'')^{i}\]
of $A\to A''$ is equivariant with respect to $T$ and $T''$ and gives the desired equivariant splitting.    
\end{proof}

Since $T_s$ commutes with $\wt{N}$, 
all the subquotients appearing in the previous subsection are stable under $T_s$.
Moreover, by this lemma,
all the splittings from Lemma~\ref{jan7-2} through Corollary~\ref{ap8-4} can be chosen $T_s$-equivariantly.

Consequently,
we obtain the following direct sum decomposition of the diagram (\ref{jun13-2}) of vector spaces with semisimple automorphisms:
\begin{align*}
    \bigoplus_{i\geq 1}   
\left(    \xymatrix{\displaystyle
 \bigoplus_{r= 0}^{i-1}(\wt{N}^r\Ud_i',T_s)\ar@<0.5ex>[r]^-{\wt{N}}&
    \displaystyle\bigoplus_{r= 0}^{i-1}(\wt{N}^r\Ud_i',T_s)
    \ar@<0.5ex>[l]^-{=}}\right).
    \end{align*}
This decomposition recovers Proposition~\ref{june14-1}.
In Section~\ref{sectionmhm},
we will use the same argument to obtain its Hodge-theoretic analogue.

\input{section4}

\bibliographystyle{alpha}
\bibliography{reference}

\noindent Takahiro Saito:
Department of Mathematics, Chuo University,
1-13-27 Kasuga, Bunkyo-ku, Tokyo 112-8551, Japan.

\noindent
\textit{E-mail address}: \texttt{takahiro.saito27.a.gmail.com}

\end{document}

%% file: section4.tex
\section{Classification of monodromic mixed Hodge modules on $\CC$}
\label{sectionmhm}
\subsection{Mixed Hodge structures with fixed graded pieces}
\label{ssfixedgradedpieces}

Let us recall a standard fact about extensions of mixed Hodge structures.

\begin{prop}[For example, Theorem~3.31 of \cite{PetersSteen}]\label{aug19-1}
    Let $H=(H_\CC,F^\bullet H_\CC,H_\QQ,W_\bullet H_\QQ)$ and $H'=(H'_\CC,F^\bullet H'_\CC,H'_\QQ,W_\bullet H'_\QQ)$ be ($\QQ$-)mixed Hodge structures with $W_mH_\QQ=H_\QQ$ and $W_mH'_\QQ=0$.
\begin{enumerate}
    \item 
    The isomorphism classes of extensions of $H'$
    by $H$ of the form:
\begin{align}
    0\to {H}\to \wt{H}\to {H}'\to 0
\end{align}
are classified by
\begin{align}\label{ap15-1}
    \exten{{H}'}{{H}}:=\Hom_\CC(H',H)/\left(\Hom_{F}({H}',{H})+\Hom_{\QQ}(H',H)\right),
\end{align}
where 
$\Hom_{\CC}(H',H):=\Hom_{\CC}(H'_\CC,H_\CC)$
and
$\Hom_{F}(H',H)$ (resp. $\Hom_{\QQ}(H',H)$) denote the subgroups of $\CC$-linear maps $H'_\CC\to H_\CC$ preserving the Hodge filtration (resp. the $\QQ$-structure).

\item The mixed Hodge structure $\wt{H}$ 
which corresponds to $\Phi\in \Hom_\CC(H',H)$ is given by the following data:
\begin{itemize}
    \item the $\CC$-vector space $H_\CC\oplus H'_\CC$ and the $\QQ$-vector space $H_\QQ\oplus H'_\QQ$,
    \item the weight filtration $W_\bullet H_\QQ\oplus W_\bullet H'_\QQ$,
    \item the Hodge filtration $F^\bullet \wt{H}_\CC$ defined as 
    \[F^{p}\wt{H}_\CC=\{(x+\Phi(y),y)\in H_\CC\oplus H'_\CC\ |\ x\in F^pH_\CC, y\in F^pH'_\CC\}.\]
\end{itemize}
\end{enumerate}
\end{prop}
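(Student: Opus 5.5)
We prove the two parts of Proposition~\ref{aug19-1} in order, first showing that every extension can be put into the normal form of part~(2), and then determining when two such normal forms give isomorphic extensions.

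\emph{Every extension has the normal form.} Let $0\to H\to \wt{H}\to H'\to 0$ be an extension. Since we work over $\QQ$, choose a $\QQ$-linear splitting $\sigma_\QQ\colon H'_\QQ\to \wt{H}_\QQ$. This identifies $\wt{H}_\QQ$ with $H_\QQ\oplus H'_\QQ$. Because $W_mH_\QQ=H_\QQ$ and $W_mH'_\QQ=0$, the weight filtration of $\wt{H}$ is $W_\bullet H_\QQ\oplus W_\bullet H'_\QQ$. Indeed, $W_k\wt{H}$ equals $W_kH$ for $k\le m$, and it maps onto $W_kH'$ with kernel $H$ for $k>m$; so strictness of morphisms with respect to $W$ forces this direct-sum form. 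Next, strictness of morphisms with respect to $F$ implies that $F^p\wt{H}_\CC$ surjects onto $F^pH'_\CC$, with kernel $F^pH_\CC$. Choose a $\CC$-linear map $\sigma_F\colon H'_\CC\to \wt{H}_\CC$ lifting the identity and satisfying $\sigma_F(F^p)\subset F^p$ for all $p$. Such a map exists by taking a basis of $H'_\CC$ adapted to the finite filtration $F$ and lifting the basis elements in $F^p$ into $F^p\wt{H}_\CC$. Set $\Phi:=\sigma_F-\sigma_\QQ\colon H'_\CC\to H_\CC$. Then $F^p\wt{H}_\CC=\{(x+\Phi(y),y)\}$ with $x\in F^pH_\CC$ and $y\in F^pH'_\CC$. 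This is exactly the data in part~(2). Conversely, for any $\Phi$ this data is a mixed Hodge structure. On each graded piece $\GR^W_k$, with $k\le m$ or $k>m$, the induced $F$ coincides with that of $\GR^W_kH$ or $\GR^W_kH'$, because $\Phi$ maps the $H'$ part into the $H$ part, which vanishes on the graded pieces of $H'$.

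\emph{Classification of extensions.} Isomorphisms of extensions are maps $\wt{H}_\Phi\to\wt{H}_{\Phi'}$ restricting to the identity on $H$ and inducing the identity on $H'$. In the chosen coordinates such a map has the form $(x,y)\mapsto (x+\psi(y),y)$. It respects the $\QQ$-structures exactly when $\psi\in\Hom_\QQ(H',H)$. It carries $F^p\wt{H}_\Phi$ onto $F^p\wt{H}_{\Phi'}$ exactly when $\Phi+\psi-\Phi'\in\Hom_F(H',H)$. This condition uses that an isomorphism of mixed Hodge structures must be an isomorphism of filtrations, and that the filtrations are finite. The ambiguity in the choices of $\sigma_\QQ$ and $\sigma_F$ changes $\Phi$ by elements of $\Hom_\QQ$ and $\Hom_F$ respectively. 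Hence the class of $\Phi$ in $\Hom_\CC(H',H)/(\Hom_F+\Hom_\QQ)$ is well defined, and this class is a complete invariant of the extension. This gives the bijection in part~(1).

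\emph{Main obstacle.} The only delicate point is the existence of the filtered lift $\sigma_F$, that is, the surjectivity of $F^p\wt{H}_\CC\to F^pH'_\CC$. This follows from strictness of morphisms of mixed Hodge structures applied to $\wt{H}\to H'$, a standard fact from Deligne's theory that may be cited. Everything else is bookkeeping with the block triangular matrices $\begin{pmatrix}1&\Phi\\0&1\end{pmatrix}$.
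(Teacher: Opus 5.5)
Your argument is correct. The paper gives no proof of its own and simply cites Peters--Steenbrink; what you wrote is the standard Carlson-type argument behind that citation: compare a $\QQ$-rational splitting with an $F$-compatible one and take the difference $\Phi$. You use the hypothesis $W_mH_\QQ=H_\QQ$, $W_mH'_\QQ=0$ exactly where it is needed, namely to make every rational splitting automatically weight-compatible, so no $W_0$-condition on $\Phi$ is required. One line worth adding: $\wt{H}_\Phi$ is graded-polarizable, as the paper's conventions require, and this is automatic since $\GR^W_\bullet\wt{H}_\Phi\simeq\GR^W_\bullet H\oplus\GR^W_\bullet H'$.
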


We now extend the preceding description to mixed Hodge structures with fixed graded pieces.
Let $\Lambda\subset \ZZ$ be a finite subset, equipped with the usual order.
Let
$\{L_w\}_{w\in \Lambda}$ be a family of pure Hodge structures,
where $L_w$ is of weight $w$.
We denote the underlying $\CC$-vector space (resp. $\QQ$-vector space) by $L_{w,\CC}$ (resp. $L_{w,\QQ}$).
We denote by $F^\bullet L_{w,\CC}$ the Hodge filtration of $L_w$. 

\begin{defi}\label{may14-3}
 \begin{enumerate}
     \item     
We denote by 
    $\Exten{\{L_w\}_{w\in \Lambda}}$ 
    the set of isomorphism classes of 
    pairs $(H,\{\alpha_w\}_{w\in \Lambda})$,
    where $H$ is a mixed Hodge structure
    and
    $\GR^W_wH=0$ for
$w\notin\Lambda$, and, for each $w\in \Lambda$,
\[\alpha_w\colon L_w\simto \GR^{W}_{w}H\]
is an isomorphism of pure Hodge structures.
Here an isomorphism
$(H,\{\alpha_w\})\simeq (H',\{\alpha'_w\})$ means an isomorphism
$f\colon H\simto H'$ of mixed Hodge structures
such that
    \[\GR^{W}_{w}f\circ \alpha_w =\alpha'_w,\]
for each $w\in \Lambda$.
\item 
We define $\UpT:=\UpT(\{L_w\}_{w\in \Lambda})$ to be the group of matrices
 $(m_{ij})_{i,j\in \Lambda}$
such that
\[m_{ij}\in \Hom_{\CC}(L_{j,\CC},L_{i,\CC})\text{,\quad and}\]
\[
    m_{ij}=\left\{\begin{array}{cc}
         \Id_{L_{i,\CC}}&  (i=j)\\
         0&  (i>j).
    \end{array}\right.
\]
In other words, the matrices in $\UpT(\{L_w\}_{w\in \Lambda})$ are upper
triangular with respect to the usual order of $\Lambda\subset \ZZ$.

\item We define subgroups $\UpT_F$ and $\UpT_\QQ$ of $\UpT$ by
\begin{align*}
    \UpT_F&:=
\left\{
(m_{ij})_{i,j\in \Lambda}\in \UpT \ \middle|\ 
m_{ij}(F^pL_{j,\CC})\subset F^pL_{i,\CC}
\quad (i,j\in \Lambda,\  p\in \ZZ)
\right\},\quad \text{and}\\
    \UpT_\QQ&:=
\left\{
(m_{ij})_{i,j\in \Lambda}\in \UpT \ \middle|\ 
m_{ij}(L_{j,\QQ})\subset L_{i,\QQ}
\quad (i,j\in \Lambda)
\right\}.
\end{align*}

\item For $M=(m_{ij})_{i,j\in \Lambda}\in \UpT$, 
we also denote by $M$ the $\CC$-linear automorphism of
$\bigoplus_{w\in \Lambda}L_{w,\CC}$
represented by the matrix $M$,
where elements of $\bigoplus_{w\in \Lambda}L_{w,\CC}$ are regarded as column vectors.
Explicitly,
for a vector $x=(x_i)_{i\in \Lambda}\in \bigoplus_{w\in \Lambda} L_{w,\CC}$,
it is given by
\[Mx=\left(\sum_{j\in \Lambda} m_{ij}(x_j)\right)_{i\in \Lambda}.\]

\item For $M\in \UpT$, we define a mixed Hodge structure $H_M$ as follows.
The underlying $\CC$-vector space $H_{M,\CC}$, $\QQ$-vector space $H_{M,\QQ}$ and the weight filtration $W_\bullet H_{M,\QQ}$ are the same as those of the mixed Hodge structure $\bigoplus_{w\in \Lambda}L_w$ obtained as the direct sum of the pure Hodge structures.
Its Hodge filtration is defined by
\[F^pH_{M,\CC}=M\left(\bigoplus_{w\in \Lambda}F^pL_{w,\CC}\right).\]

\end{enumerate}
\end{defi}

By construction, the associated graded object $\GR^W_{\bullet }H_{M}$ is naturally identified with $\bigoplus_{w\in \Lambda}L_{w}$.
Thus $H_M$, together with these identifications, defines an element of $\Exten{\{L_w\}_{w\in \Lambda}}$.

Let us consider the double quotient
\[\UpT_\QQ \backslash \UpT / \UpT_F,\]
where $\UpT_\QQ$ acts on $\UpT$ by left multiplication and $\UpT_F$ acts on $\UpT$
by right multiplication.
In other words, two elements $M,M'\in \UpT$ are equivalent if there exist
$A\in \UpT_\QQ$ and $B\in \UpT_F$ such that
\[M'=AMB.\]
Note that $\UpT_\QQ \backslash \UpT / \UpT_F$
has no natural group structure in general.
When $\Lambda$ consists of two elements $w<w'$,
the set $\UpT_\QQ \backslash \UpT / \UpT_F$ is identified with $\Hom_\CC(L_{w'},L_{w})/\left(\Hom_{F}(L_{w'},L_{w})+\Hom_{\QQ}(L_{w'},L_{w})\right)$,
i.e. $\exten{L_{w'}}{L_{w}}$.

\begin{prop}\label{jun16-2}
The map 
\[\UpT\ni M\mapsto H_M\in \Exten{\{L_w\}_{w\in \Lambda}}\]
induces a bijection
    \[\UpT_\QQ \backslash \UpT / \UpT_F\simto \Exten{\{L_w\}_{w\in \Lambda}}. \]
\end{prop}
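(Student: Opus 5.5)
We need four things: that $H_M$ is a mixed Hodge structure whose graded pieces are the $L_w$; that the map factors through the double quotient; that it is surjective; and that it is injective on the quotient.

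\emph{Well-definedness.} Since $M$ is upper triangular with identity diagonal blocks, it preserves $W_k:=\bigoplus_{w\le k}L_{w,\CC}$ and induces the identity on $W_k/W_{k-1}=L_{k,\CC}$. Hence $M$ is a filtered automorphism with identity graded pieces. The filtration induced by $F^pH_{M,\CC}=M(\bigoplus_w F^pL_{w,\CC})$ on $\GR^W_k$ is therefore $F^pL_{k,\CC}$. So $H_M$ is a mixed Hodge structure with tautological identifications $\alpha_w$, and graded-polarizability is inherited from the $L_w$.

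\emph{Invariance under the double coset.} Take $B\in\UpT_F$. Each $m_{ij}$ of $B$ preserves $F$, so $B$ maps $\bigoplus F^pL_w$ into itself, with equality since $B^{-1}\in\UpT_F$. Thus $H_{MB}=H_M$ as filtered spaces. Take $A\in\UpT_\QQ$. Then $A$ is a $\QQ$-rational, $W$-preserving automorphism of $\bigoplus L_{w,\QQ}$ carrying $F^\bullet H_{M}$ onto $F^\bullet H_{AM}$, and it induces the identity on $\GR^W$. So $A\colon H_M\simto H_{AM}$ is an isomorphism in $\Exten{\{L_w\}}$.

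\emph{Surjectivity.} Let $(H,\{\alpha_w\})$ be given. Choose a $\QQ$-splitting $H_\QQ\simeq\bigoplus_w L_{w,\QQ}$ compatible with $W$ and inducing $\alpha_w$ on graded pieces; this is possible since we are over a field. Independently, use Deligne's splitting $H_\CC=\bigoplus_{p,q} I^{p,q}$, or equivalently the fact that $F$ and $W$ on $H_\CC$ admit a common splitting. This gives a $\CC$-isomorphism $\sigma\colon\bigoplus_w L_{w,\CC}\simto H_\CC$ that is compatible with $W$, induces $\alpha_w$ on $\GR^W$, and carries $\bigoplus_w F^pL_{w,\CC}$ onto $F^pH_\CC$. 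Write $\rho$ for the $\QQ$-splitting, extended to $\CC$, and set $M:=\rho^{-1}\circ\sigma$. Then $M$ is upper triangular with identity diagonal, and $\rho\colon H_M\to H$ is an isomorphism in $\Exten{\{L_w\}}$.

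\emph{Injectivity.} Suppose $\phi\colon H_M\simto H_{M'}$ is an isomorphism compatible with the $\alpha_w$. On the common underlying space $\bigoplus L_{w,\QQ}$, $\phi$ is $\QQ$-rational, preserves $W$, and induces the identity on $\GR^W$; thus $A:=\phi\in\UpT_\QQ$. It satisfies $AM(\bigoplus F^pL_w)=M'(\bigoplus F^pL_w)$. Set $B:=M'^{-1}AM\in\UpT$. Then $B$ preserves $\bigoplus_w F^pL_{w,\CC}$ for every $p$.

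The main obstacle is deducing $B\in\UpT_F$ from this. The difficulty is that $\UpT_F$ is defined blockwise, by requiring $m_{ij}(F^pL_j)\subset F^pL_i$, while preservation of the total filtration is a priori only a statement about the whole matrix. The way through is the direct-sum structure: take $x\in F^pL_{j,\CC}$, placed in the $j$-th slot. Then $Bx=(b_{ij}x)_i$ lies in $\bigoplus_i F^pL_{i,\CC}$, and because that subspace is a direct sum of its components, each $b_{ij}x\in F^pL_{i,\CC}$. Hence $B\in\UpT_F$ and $M'=AMB^{-1}$, which gives injectivity.
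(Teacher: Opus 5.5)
Your proof is correct, but it takes a different route from the paper. The paper proves this in one line, by induction on $|\Lambda|$ from the two-step case, Proposition~\ref{aug19-1}. There one peels off the top weight, represents $W_{<\max}H$ by a smaller matrix $M'$ via the induction hypothesis, and reads Carlson's extension class $\Phi\in\Hom_\CC(L_{\max},H_{M'})$ as the last block column.

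You instead argue for all of $\Lambda$ at once. Surjectivity comes from comparing a $\QQ$-rational splitting $\rho$ of $W$ with a $\CC$-splitting $\sigma$ adapted to both $W$ and $F$, so that $\rho^{-1}\sigma\in\UpT$ automatically. Injectivity comes from observing that an isomorphism $H_M\simeq H_{M'}$ inducing the identity on $\GR^W$ is itself an element $A\in\UpT_\QQ$. Then $B=M'^{-1}AM$ preserves $\bigoplus_w F^pL_{w,\CC}$ and therefore lies in $\UpT_F$ block by block.

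The direct route is self-contained, and it avoids some bookkeeping that the paper's sketch leaves implicit. In the inductive step, the identification of $W_{<\max}H$ with $H_{M'}$ is only determined up to automorphisms of $H_{M'}$ inducing the identity on $\GR^W$. For injectivity one must check that these automorphisms act on Carlson's group $\Hom_\CC/(\Hom_F+\Hom_\QQ)$ in a way absorbed by the double-coset relation. Your single matrix computation handles all weights simultaneously.

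As you remark, Deligne's $I^{p,q}$ is more than is needed: any basis of $H_\CC$ adapted to both $W$ and $F$ produces $\sigma$, so your argument is pure linear algebra. The paper's route is shorter given the cited result, and it makes the link to the classical $\mathrm{Ext}^1$ description explicit.
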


\begin{proof}
The assertion follows by induction on $|\Lambda|$, using Proposition~\ref{aug19-1}.
\end{proof}

Next, we consider morphisms between mixed Hodge structures of the form $H_M$.
Recall that for a pure Hodge structure $H$ of weight $w$,
$H^{p,q}_\CC=F^pH_\CC\cap \ov{F^qH_\CC}$ and $H_\CC=\bigoplus_{p+q=w}H^{p,q}_\CC$.

\begin{defi}\label{may18-4}
    \begin{enumerate}
        \item For pure Hodge structures $H$ and $H'$ of weights $w$ and $w'$, respectively, with $w>w'$,
    let $\varphi\colon H_\CC\to H'_\CC$ be a $\CC$-linear map.
    We say that $\varphi$ satisfies condition $(*)$ if
    \[\varphi({H^{p,q}})\subset \displaystyle\bigoplus_{\substack{p'<p\\ p'+q'=w'}}(H'_\CC)^{p',q'}=\overline{F^{w'-p+1}H'_{\CC}}\]
for any $p,q\in \ZZ$ with $p+q=w$.
    \item 
For $M=(m_{ij})\in \UpT(\{L_w\}_{w\in \Lambda})$, we say that $M$ satisfies condition $(*)$
if $m_{ij}\colon L_{j,\CC}\to L_{i,\CC}$ satisfies condition $(*)$
for any $i,j\in \Lambda$ with $i<j$.

    \end{enumerate}
    \end{defi}

\begin{lem}\label{may7-1}
    For a class in $\UpT_\QQ\backslash \UpT/\UpT_F$,
    we can always take a representative $M=(m_{ij})\in \UpT$
    satisfying condition $(*)$.
\end{lem}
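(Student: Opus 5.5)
The idea is to use only right multiplication by $\UpT_F$; the left action of $\UpT_\QQ$ will not be needed. The basic input is a splitting of $\CC$-linear maps between two pure Hodge structures according to Hodge types.

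Let $H$ and $H'$ be pure of weights $w>w'$, and let $\varphi\in\Hom_\CC(H,H')$. Using $H_\CC=\bigoplus_{p+q=w}H^{p,q}$ and $H'_\CC=\bigoplus_{p'+q'=w'}(H'_\CC)^{p',q'}$, write $\varphi$ as a sum of its components $\varphi^{p,q}_{p',q'}\colon H^{p,q}\to (H'_\CC)^{p',q'}$. Set
\[\varphi_F:=\sum_{p'\ge p}\varphi^{p,q}_{p',q'},\qquad \varphi_*:=\sum_{p'<p}\varphi^{p,q}_{p',q'},\]
so that $\varphi=\varphi_F+\varphi_*$. By construction $\varphi_*$ satisfies condition $(*)$. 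The map $\varphi_F$ preserves the Hodge filtration. Indeed, $F^pH_\CC=\bigoplus_{a\ge p}H^{a,w-a}$, and $\varphi_F$ sends $H^{a,w-a}$ into $\bigoplus_{p'\ge a}(H'_\CC)^{p',w'-p'}\subset F^{a}H'_\CC\subset F^pH'_\CC$. This splitting is clearly $\CC$-linear in $\varphi$.

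Now take $M=(m_{ij})\in\UpT$. We look for $B=(b_{ij})\in\UpT_F$ such that $MB$ satisfies $(*)$; then $MB$ is a representative of the same class in $\UpT_\QQ\backslash\UpT/\UpT_F$. Since $m_{ii}=b_{ii}=\Id$, for $i<j$ we have
\[(MB)_{ij}=m_{ij}+\sum_{i<k<j}m_{ik}b_{kj}+b_{ij}.\]
This entry involves only the $b_{kj}$ with $i\le k\le j$. For $k>i$ these satisfy $j-k<j-i$.

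We therefore define $b_{ij}$ by induction on the gap $j-i\ge 1$. Put $\psi_{ij}:=m_{ij}+\sum_{i<k<j}m_{ik}b_{kj}$, which is already determined by the induction hypothesis, and set
\[b_{ij}:=-(\psi_{ij})_F.\]
Each $b_{ij}$ preserves $F$, so $B\in\UpT_F$. Moreover $(MB)_{ij}=\psi_{ij}-(\psi_{ij})_F=(\psi_{ij})_*$, which satisfies $(*)$ for every $i<j$. Hence $MB$ satisfies condition $(*)$.

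No real obstacle is expected. The only points needing care are two. First, the type decomposition $\varphi=\varphi_F+\varphi_*$ must be well defined, which holds because the Hodge decompositions of pure structures are direct sums. Second, the triangular recursion must be well founded, which is the dependence of $(MB)_{ij}$ only on entries of $B$ with smaller gap noted above.
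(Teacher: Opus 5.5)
Your proof is correct and is essentially the paper's argument. Your $\varphi_F$ is exactly the paper's $r_{ij}$ (the projection of $m_{ij}|_{L^{p,q}_{j,\CC}}$ onto $F^pL_{i,\CC}$), and both proofs use only right multiplication by $\UpT_F$ with induction on the gap $j-i$; the only difference is that the paper multiplies successively by matrices supported on a single gap level, while you assemble one $B\in\UpT_F$ by a triangular recursion, which amounts to the same thing.
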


%    \begin{enumerate}
 %       \item $m_{ij}({H^{p,q}L_{j,\CC}})\subset \displaystyle\bigoplus_{\substack{p'<p\\ p'+q'=i}}H^{p',q'}L_{i,\CC}=\overline{F^{i-p+1}L_{i,\CC}}$ for any $i,j,p,q\in \ZZ$ with $i<j$ and $p+q=j$,
  %      where $H^{p,q}$ is the $(p,q)$-part of a Hodge structure.
  %      \item $m_{ij}(F^pL_{j,\CC})\subset \overline{F^{i-p+1}L_{i,\CC}}$ for any $i,j,p\in \ZZ$ with $i<j$.
   % \end{enumerate}
\begin{proof}
    Take a representative $M\in \UpT$.
   For $i,j\in \Lambda$ with $i<j$ 
   we define a $\CC$-linear map $r_{ij}$ as follows.
   For $p,q\in \ZZ$ with $p+q=j$,
let $\pi_i^{\geq p}$ be the projection 
$L_{i,\CC}\to  \bigoplus_{\substack{p'\geq p\\ p'+q'=i}}L^{p',q'}_{i,\CC}(=F^pL_{i,\CC})$,
and set
\[r_{ij}|_{L^{p,q}_{j,\CC}}=\pi_i^{\geq p}\circ m_{ij}|_{L^{p,q}_{j,\CC}}.\]
   Then, $r_{ij}$ preserves the Hodge filtration, and $m_{ij}-r_{ij}$ satisfies condition $(*)$.
We prove the assertion by induction on an integer $s\geq 1$.
Assume that, after replacing $M$ by an equivalent representative,
$m_{ij}$ satisfies condition $(*)$ for all $i,j\in \Lambda$ with $0<j-i\leq s-1$.
This is true for $s=1$.
We show that after replacing $M$ by an equivalent representative,
condition $(*)$ also holds for $j-i=s$.
Let $B=(\beta_{ij})\in \UpT_F$ be the matrix defined by
$\beta_{ii}=\iden_{L_{i,\CC}}$ and
$\beta_{ij}=-r_{ij}$ if $j-i=s$,
and $\beta_{ij}=0$ for other $i<j$.
We consider $MB$.
For $a,b\in \Lambda$ with $b>a$,
the $(a,b)$-component of $MB$ is 
$\sum_{a\leq c\leq b}m_{ac}\beta_{cb}$.
If $b-a<s$, then this is equal to $m_{ab}$, which satisfies condition $(*)$ by the induction hypothesis.
If $b-a=s$, then the $(a,b)$-component is 
\[m_{ab}+m_{a(b-s)}\beta_{(b-s)b}=m_{ab}-r_{ab},\]
which satisfies condition $(*)$.
Thus the assertion holds for any $s$.
\end{proof}

Let $\{L_w'\}_{w\in \Lambda}$ be another family of pure Hodge structures,
where $L_w'$ is of weight $w$.
Set $\UpT':=\UpT(\{L_w'\}_{w\in \Lambda})$
and define $\UpT'_F$, $\UpT'_\QQ$ as in Definition~\ref{may14-3}.
For $i, i_0\in \ZZ$ with $i\leq i_0$, we set
\[
   \Lambda_{[i,i_0]}:=\{j\in \Lambda\mid i\leq j\leq i_0\}.
\]
We use similar notation such as $\Lambda_{<i_0}$, $\Lambda_{\leq i_0}$, and $\Lambda_{(i,i_0)}$.

\begin{prop}\label{sep28-1}
Let $M=(m_{ij})_{i,j\in \Lambda}\in \UpT$ and $M'=(m_{ij}')_{ i,j\in \Lambda}\in \UpT'$
be elements satisfying condition $(*)$.
%We denote by $[M]$ and $[M']$ their classes in
%$\UpT_{\QQ}\backslash \UpT/\UpT_F$ and
%$\UpT'_{\QQ}\backslash \UpT'/\UpT'_F$, respectively.
Let $f_w\colon L_{w,\CC}\to L'_{w,\CC}$ be the 
$\CC$-linear map underlying a morphism $L_w\to L'_w$ of pure Hodge structures.
Then, 
the $\CC$-linear map $f=\bigoplus_{w}f_w\colon \bigoplus_{w}L_{w,\CC}\to \bigoplus_{w}L'_{w,\CC}$
underlies a morphism of mixed Hodge structures $H_M\to H_{M'}$
if and only if we have
\[f_im_{ij}=m_{ij}'f_j\]
for $i,j\in \Lambda$ with $i\leq j$.
\end{prop}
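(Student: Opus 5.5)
The strategy is to reduce compatibility with the Hodge filtrations to a statement about a single upper triangular matrix, and then use condition $(*)$ to force its off-diagonal part to vanish. Since $f=\bigoplus_w f_w$ is block diagonal and each $f_w$ comes from a morphism of pure Hodge structures, $f$ automatically preserves the $\QQ$-structures and the weight filtrations of $H_M$ and $H_{M'}$; these coincide with those of $\bigoplus_w L_w$ and $\bigoplus_w L'_w$. So $f$ underlies a morphism $H_M\to H_{M'}$ if and only if $f(F^pH_{M,\CC})\subset F^pH_{M',\CC}$ for all $p$. By Definition~\ref{may14-3}, this means
\[
M'^{-1}fM\Bigl(\bigoplus_w F^pL_{w,\CC}\Bigr)\subset \bigoplus_w F^pL'_{w,\CC}\quad\text{for all }p.
\]
Since both filtrations are direct sums, a block matrix $(x_{ij})$ preserves $\bigoplus_w F^p$ if and only if every block $x_{ij}$ preserves $F^p$.

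The ``if'' direction is immediate. The relations $f_im_{ij}=m'_{ij}f_j$ say exactly that $fM=M'f$, hence $M'^{-1}fM=f$. This preserves the Hodge filtrations because each $f_w$ does.

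For the ``only if'' direction, set $D:=M'^{-1}fM-f$. The diagonal blocks of $M'^{-1}fM$ are the $f_i$, so $D$ is strictly upper triangular, and by the remark above every block $D_{ij}$ preserves the Hodge filtration. I will prove $f_im_{ij}=m'_{ij}f_j$ by induction on $s=j-i\ge 1$.

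Assume the relation holds for all pairs with gap less than $s$. Then $fM-M'f$ has vanishing blocks in gaps $<s$. Since $M'^{-1}$ is unitriangular and $D=M'^{-1}(fM-M'f)$, the blocks of $D$ also vanish in gaps $<s$, and for $j-i=s$ we get
\[
D_{ij}=f_im_{ij}-m'_{ij}f_j.
\]
Now use condition $(*)$. Each $f_i$ is a morphism of pure Hodge structures, so it respects the Hodge decompositions. Since $m_{ij}$ and $m'_{ij}$ satisfy $(*)$, both $f_im_{ij}$ and $m'_{ij}f_j$ send $L^{p,q}_{j,\CC}$ into $\bigoplus_{p'<p}(L'_{i,\CC})^{p',q'}$. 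Hence so does $D_{ij}$.

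On the other hand, $D_{ij}$ preserves $F$, and $L^{p,q}_{j,\CC}\subset F^pL_{j,\CC}$. Therefore
\[
D_{ij}(L^{p,q}_{j,\CC})\subset F^pL'_{i,\CC}\cap\bigoplus_{p'<p}(L'_{i,\CC})^{p',q'}=0.
\]
Since this holds for every $(p,q)$, we get $D_{ij}=0$, i.e.\ $f_im_{ij}=m'_{ij}f_j$ for all pairs with gap $s$. This completes the induction.

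The main obstacle is this final step. Identifying $D_{ij}$ with $f_im_{ij}-m'_{ij}f_j$ needs the inductive vanishing of the lower-gap blocks. After that, the key point is the clash between preserving $F$ and the strict lowering of the $p$-index guaranteed by $(*)$.
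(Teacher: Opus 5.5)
Your proof is correct and follows essentially the paper's argument. For $i<j$, your block $D_{ij}(x)$ is exactly the paper's component $y_i$ of $M'^{-1}fM\iota_{j}(x)$. Your induction on the gap $j-i$ is the paper's descending induction on the row index for a fixed column, and it uses the same point that condition $(*)$ puts the discrepancy $f_im_{ij}-m'_{ij}f_j$ into $\overline{F^{i-p+1}L'_{i,\CC}}$, which meets $F^pL'_{i,\CC}$ only in $0$.
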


\begin{proof}
Since each $f_w$ underlies a morphism of pure Hodge structures,
$f$ preserves the $\QQ$-structure and the weight filtration.
Thus, it suffices to check that $f$ preserves the Hodge filtration. 
By definition,
$f$ preserves the Hodge filtration of $H_M$ and $H_{M'}$ if and only if
for any $i_0\in \Lambda$ and $x_{i_0}\in F^pL_{i_0,\CC}$,
there exists $(y_j)\in \bigoplus_{j\in \Lambda}F^pL'_{j,\CC}$
such that
\[M'(y_j)=fM\iota_{i_0}(x_{i_0}),\]
where $\iota_{i}\colon L_{i,\CC}\hookrightarrow \bigoplus_{w\in \Lambda}L_{w,\CC}$
is the natural inclusion.
By the upper triangularity of $M$ and $M'$,
$y_j=0$ for $j>i_0$ and $y_{i_0}=f_{i_0}(x_{i_0})$.
Therefore, we have the equality
\begin{align}
    \label{may15-1}
    f_i m_{i i_0}(x_{i_0})=\sum_{\substack{j\in \Lambda_{[i,i_0]}}}m'_{ij}(y_j)
\end{align}
for $i\in \Lambda_{\leq  i_0}$.

Assume $f$ preserves the Hodge filtration of $H_M$ and $H_{M'}$
and $x_{i_{0}}\in L^{p,q}_{i_0,\CC}$ for $p+q=i_0$,
and take $(y_j)$ as above.
For fixed $i_1\in \Lambda_{<i_0}$,
suppose that
$y_j=0$ for any $j\in \Lambda_{(i_1,i_0)}$.
Then, by (\ref{may15-1}), we obtain
\[f_{i_1}m_{i_1i_0}(x_{i_0})=m'_{i_1i_0}f_{i_0}(x_{i_0})+y_{i_1}.\]
By condition $(*)$ for $M$ and $M'$,
$f_{i_1}m_{i_1i_0}(x_{i_0})$ and $m'_{i_1i_0}f_{i_0}(x_{i_0})$
are in $\overline{F^{i_1-p+1}L'_{i_1,\CC}}$.
Since $F^pL'_{i_1,\CC}\cap \overline{F^{i_1-p+1}L'_{i_1,\CC}}=\{0\}$ and $y_{i_1}\in F^pL'_{i_1,\CC}$,
we have $f_{i_1}m_{i_1i_0}(x_{i_0})-m'_{i_1i_0}f_{i_0}(x_{i_0})=0$ and $y_{i_1}=0$.
By descending induction on $i_1$,
for $i\in \Lambda_{<i_0}$ we obtain $y_{i}=0$ and
$f_im_{ii_0}(x_{i_0})=m'_{ii_0}f_{i_0}(x_{i_0})$.

Conversely,
assume that $f_im_{ij}=m_{ij}'f_j$ holds.
For any $x_{i_0}\in L^{p,q}_{i_0,\CC}$,
if we set $y_{i_0}=f_{i_0}(x_{i_0})(\in (L'_{i_0,\CC})^{p,q})$ and $y_j=0$ for $j\neq i_0$,
then we have $M'(y_j)=fM\iota_{i_0}(x_{i_0})$.
Since this holds for every $p,q$,
by linearity and the Hodge decomposition,
the same conclusion holds for any $x_{i_0}\in F^pL_{i_0,\CC}$.
This implies $f$ preserves the Hodge filtration.

This completes the proof.
\end{proof}

\subsection{Split graded models}

%We denote by $\MHSp$ the category of graded polarizable mixed Hodge structures.
%Since we will only consider graded polarizable mixed Hodge structures in this paper,
%we will simply call them mixed Hodge structures.

By Corollary~\ref{june11-3},
classifying the isomorphism classes of $\MHM_{\mathrm{mon},\unip}(\CC_t)$ is equivalent to classifying diagrams of mixed Hodge structures of the following form.
Let $\Vd$ and $\Ud$ be mixed Hodge structures,
and $f\colon \Vd\to \Ud(-1)$ and $g\colon \Ud\to \Vd$ be morphisms of mixed Hodge structures such that $gf$ and $fg$ are nilpotent:
\begin{align}\label{mar3-3}
    \xymatrix@C=40pt{\Vd\ar@<0.5ex>[r]^-{f,\ (-1)}&\Ud\ar@<0.5ex>[l]^-{g}}.
\end{align}
We identify the underlying $\QQ$-vector space of $\Ud(-1)$ with $\Ud_\QQ$ by multiplication by $2\pi\sqrt{-1}$.
In this way, the diagram above gives a diagram of $\QQ$-vector spaces of the form (\ref{jan1-1}), which we call the underlying diagram.

%Then, by ??, we obtain a monodromic mixed Hodge module on $\CC_t$
%whose underlying perverse sheaf corresponds to 
%the induced diagram on the underlying $\mathbb{Q}$-vector spaces.

Let $\QQ^H$ be the trivial polarizable pure Hodge structure of weight $0$,
whose Hodge filtration jumps at $0$.
For $i\geq 1$, we define a mixed Hodge structure $K_i$ by
\[K_i:=\QQ^H(i-1)\oplus \QQ^H(i-2)\oplus \cdots \oplus \QQ^H.\]
Set $K_{0}=0$.
We set $\Vd=\Ud=K_i$ and 
define $g=\iden$ and $f\colon \Vd\to \Ud(-1)$ so that
the restriction of $f$ to each summand $\QQ^H(j)$
is the identity for $j\leq i-2$, and zero for $j=i-1$.
In this case, 
the underlying diagram
is of type $A_{i}$ defined in Definition~\ref{sep1-1}.
Similarly, for the pairs 
$(\Vd,\Ud)=(K_i(-1),K_i)$ (resp. $(K_{i-1},K_i)$, $(K_{i+1}(-1),K_i)$),
we obtain a diagram whose underlying diagram is of type $B_i$ (resp. $C_{i}$, $D_{i}$).
To summarize, the pairs $(V,U)$ for the four types are as follows:
\[\begin{array}{c|cccc}
     & A_i&B_i&C_i&D_i \\\hline
     V&K_i&K_i(-1)&K_{i-1}&K_{i+1}(-1)\\
     U&K_i&K_i&K_i&K_i
\end{array}.\]
Here $i\geq 1$ for the types $A_i$, $B_i$, $C_i$, whereas $i\geq 0$ for $D_i$.
See the diagrams below for examples of the types $A_3$, $B_3$, $C_3$ and $D_3$.
\begin{center}
\centering

\begin{minipage}{0.24\textwidth}
\centering
\parbox[c][5.5cm][c]{\linewidth}{%
\[
\xymatrix@C=25pt@R=3pt{
&\QQ^H\ar[ld]\\
\QQ^H\ar[dr]|-{\scriptscriptstyle (-1)}&\\
&\QQ^H(1)\ar[ld]\\
\QQ^H(1)\ar[dr]|-{\scriptscriptstyle(-1)}&\\
&\QQ^H(2)\ar[ld]\\
\QQ^H(2)\ar@{.>}[dr]&\\
&0\\
}
\]
}
\par
{\small Type $A_3$}
\end{minipage}
\hfill
\begin{minipage}{0.24\textwidth}
\centering
\parbox[c][5.5cm][c]{\linewidth}{%
\[
\xymatrix@C=25pt@R=3pt{
\QQ^H(-1)\ar[rd]|-{\scriptscriptstyle (-1)}&\\
&\QQ^H\ar[ld]\\
\QQ^H\ar[dr]|-{\scriptscriptstyle (-1)}&\\
&\QQ^H(1)\ar[ld]\\
\QQ^H(1)\ar[dr]|-{\scriptscriptstyle (-1)}&\\
&\QQ^H(2)\ar@{.>}[ld]\\
0&\\
}
\]
}
\par
{\small Type $B_3$}
\end{minipage}
\hfill
\begin{minipage}{0.24\textwidth}
\centering
\parbox[c][5.5cm][c]{\linewidth}{%
\[
\xymatrix@C=25pt@R=1pt{
&\QQ^H\ar[ld]\\
\QQ^H\ar[dr]|-{\scriptscriptstyle (-1)}&\\
&\QQ^H(1)\ar[ld]\\
\QQ^H(1)\ar[dr]|-{\scriptscriptstyle (-1)}&\\
&\QQ^H(2)\ar@{.>}[ld]\\
0&
}
\]
}
\par
{\small Type $C_3$}
\end{minipage}
\hfill
\begin{minipage}{0.24\textwidth}
\centering
\parbox[c][5.5cm][c]{\linewidth}{%
\[
\xymatrix@C=25pt@R=1pt{
\QQ^H(-1)\ar[dr]|-{\scriptscriptstyle (-1)}&\\
&\QQ^H\ar[ld]\\
\QQ^H\ar[dr]|-{\scriptscriptstyle (-1)}&\\
&\QQ^H(1)\ar[ld]\\
\QQ^H(1)\ar[dr]|-{\scriptscriptstyle (-1)}&\\
&\QQ^H(2)\ar[dl]\\
\QQ^H(2)\ar@{.>}[dr]&\\
&0
}
\]
}
\par
{\small Type $D_3$}
\end{minipage}
\end{center}

\begin{defi}\label{ap8-0}
For each type $X_i$ with $X\in \{A,B,C,D\}$,
we define $X_i^H$ to be the unipotent monodromic mixed Hodge module on $\CC_t$
        associated to the diagram of mixed Hodge structures constructed above whose underlying diagram is of type $X_i$.
%We denote by $D(X_i^H)$ the corresponding diagram of mixed Hodge structures.
\end{defi}

For $i\geq 1$, let $\calL_i^H$ denote the admissible graded-polarizable variation of mixed Hodge structure on $\CS$ introduced in the Introduction, whose underlying local system is $\calL_i$.
By construction, 
for each $X\in \{A,B,C,D\}$, we have
\[\psi_{t}X^H_i\simeq K_i.\]
Moreover, for $i\geq 1$, 
\[X^H_i|_{\CS}\simeq \calL^H_i[1].\]
Note that $X_i^H$ is not the only mixed Hodge module
whose underlying perverse sheaf is of type $X_i$.
We choose this normalization so that the properties above hold.
We will describe the other possibilities later.

For a mixed Hodge structure $H$,
the associated graded object with respect to the weight filtration
\[\GR^W_\bullet H=\bigoplus_{r\in \ZZ}\GR^W_r H\]
is again a mixed Hodge structure. 
Then, for the diagram (\ref{mar3-3}),
by taking the associated graded object of $U$ and $V$ with respect to the weight filtrations,
we obtain another diagram of mixed Hodge structures:
\begin{align}
 \label{mar4-2}
    \xymatrix@C=40pt{\GR^W_\bullet \Vd\ar@<0.5ex>[r]^-{\GR^W_\bullet f,\ (-1)}&\GR^W_{\bullet}\Ud\ar@<0.5ex>[l]^-{\GR^W_\bullet g}}.
\end{align}

\begin{defi}
    For $\calM\in \MHM_{\mon,\unip}(\CC_t)$ corresponding to the diagram (\ref{mar3-3}),
    we denote by $G(\calM)$ the object in $\MHM_{\mon,\unip}(\CC_t)$
    corresponding to the diagram (\ref{mar4-2}).
\end{defi}

\begin{rem}\label{ap20-1}
    For $r\in \ZZ$,
    the subobject $W_r\calM\subset \calM$ corresponds to
    the following diagram:
    \begin{align*}
    \xymatrix@C=40pt{W_r\Vd\ar@<0.5ex>[r]^-{f,\ (-1)}&W_{r-1}\Ud\ar@<0.5ex>[l]^-{g}}.
\end{align*}
Therefore, in general,
$G(\calM)$ does not coincide with $\GR^{W}_\bullet \calM$. 
\end{rem}

Let $H$ be a pure Hodge structure of weight $w$.
It defines a pure Hodge module of weight $w+1$ on $\CC_t$
associated with the constant variation of Hodge structure with fiber $H$.
For a mixed Hodge module $\calM$ on $\CC_t$, we denote by $\calM\otimes H$
the tensor product of $\calM$ with this constant Hodge module.

\begin{thm}\label{may15-5}
    For any $\calM\in \MHM_{\mon,\unip}(\CC_t)$,
    the mixed Hodge module $G(\calM)$ can be decomposed as
    \begin{align}\label{ap10-2}
        G(\calM)\simeq \bigoplus_{s=1}^{\ell}(X^s)_{i(s)}^{H}\otimes H^s,
    \end{align}
    where $X^s\in \{A,B,C,D\}$ and $i(s)\in \ZZ_{\geq 0}$,
    with $i(s)\geq 1$ unless $X^s=D$, 
    $(X^s)_{i(s)}^{H}$ is the mixed Hodge module defined in
Definition~\ref{ap8-0},
    and
    $H^s$ is a pure Hodge structure. %\say{regarding as a constant Hodge strみたいに言いたいが、そうするとシフトがおかしい}
\end{thm}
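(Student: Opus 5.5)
By Corollary~\ref{june11-3}, the plan is to work directly with the graded diagram (\ref{mar4-2}). Write $\Vd^{\mathrm{gr}}=\GR^W_\bullet\Vd$ and $\Ud^{\mathrm{gr}}=\GR^W_\bullet\Ud$. These are direct sums of pure Hodge structures, and $f,g$ are sums of morphisms of pure Hodge structures between fixed weights; $f$ goes from weight $w$ to weight $w-2$ after the identification $\Ud(-1)$, and $g$ preserves weight. The category of polarizable pure Hodge structures of a fixed weight is abelian and semisimple. The strategy is therefore to rerun the linear-algebra argument of Section~\ref{sectionperv} inside the semisimple abelian category of graded-polarizable mixed Hodge structures that are direct sums of pure ones. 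After the usual Tate twists, this is a product over weights of semisimple categories.

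\textbf{Step 1.} The subquotients $\Vd_i$, $\Vd_i'$, $\Ud_i$, $\Ud_i'$ of Definition~\ref{mar4-3} and their tilde versions are built from kernels, images, sums and intersections of morphisms. Hence they are (graded) mixed Hodge structures, and Lemmas~\ref{jan7-1} and~\ref{jan7-2} hold verbatim in this abelian category, with $f$ carrying a twist by $(-1)$. Since every short exact sequence splits in a semisimple category, the decompositions (\ref{mar4-6}) can be chosen as morphisms of Hodge structures. The same holds for the sections in Lemma~\ref{mar4-8}: the inductive modification in its proof uses only compositions with $g$ or $f$ and direct sums of sections, so all sections stay morphisms of Hodge structures. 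Consequently Corollary~\ref{ap3-3-c}, Lemma~\ref{ap8-3} and Corollary~\ref{ap8-4} produce a decomposition of the diagram (\ref{mar4-2}) into summands of the form
\[
\bigoplus_r N^r_\Vd(\Vd_i)\rightleftarrows \bigoplus_r fN^r_\Vd(\Vd_i),
\]
and analogous summands for the other three subquotients. Each of these is a sub-diagram of Hodge structures.

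\textbf{Step 2.} Fix one summand, say the one attached to $\Vd_i$, and set $H:=\Vd_i$, regarded as a Hodge sub-object of $\Vd^{\mathrm{gr}}$. Decompose $H$ by weight, $H=\bigoplus_w H_w$; each weight part gives its own summand. The maps $N_\Vd^r\colon H\to N_\Vd^r(H)$ and $fN^r_\Vd\colon H\to fN_\Vd^r(H)$ are isomorphisms of Hodge structures up to Tate twists $(-r)$, respectively $(-r-1)$. They are injective by construction of $\Vd_i$: the sequences in Lemma~\ref{jan7-1} are inclusions. The summand is therefore isomorphic to the diagram of type $D_i^H$ tensored with a suitable Tate twist of $H_w$.

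\textbf{Step 3.} To compare with the normalization of Definition~\ref{ap8-0}, note that the $\Vd$-part of $D_i^H\otimes H'$ is $K_{i+1}(-1)\otimes H'$. This has top term $\QQ^H(-1)\otimes H'$, and the maps are identities up to twists, with sign conventions given by the definition of $f$ on $K_i$. Choosing $H'=H_w(1)$ makes the isomorphism explicit. A scalar rescaling by signs or powers of $2\pi\sqrt{-1}$ absorbs the discrepancy, using the identification of $\Ud(-1)_\QQ$ with $\Ud_\QQ$. Types $A$, $B$, $C$ arise from $\Ud_i'$, $\Vd_i'$ and $\Ud_{i-1}$ in the same way. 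Finally, the tensor product with the constant Hodge module on $\CC_t$ corresponds under Corollary~\ref{june11-3} to tensoring the diagram termwise with $H'$; I would check this by compatibility of $\psi_t$, $\phi_t$, $\Can$ and $\Var$ with external tensor by a constant VHS.

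\textbf{Main obstacle.} The step I expect to require the most care is Step 1, specifically that the modified sections in the proof of Lemma~\ref{mar4-8} remain morphisms of Hodge structures. The twist bookkeeping also needs care: $f$ lands in $\Ud(-1)$, so each subquotient carries a twist depending on the parity of its position in the chain. If a direct argument is awkward, I would instead pass to the equivalent category of diagrams of graded polarizable pure Hodge structures, with $f$ of degree $-2$ after twisting. In that abelian semisimple category, Section~\ref{sectionperv} applies word for word, since it only used that category's exactness and splittings.
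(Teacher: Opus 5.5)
Your proposal is correct and follows essentially the same route as the paper. The paper also replaces $\calM$ by $G(\calM)$ and observes that every subquotient $L$ satisfies $\GR^W_\bullet L\simeq L$. It then uses splitting of polarizable pure Hodge structures to lift Lemma~\ref{jan7-2} through Corollary~\ref{ap8-4} to diagrams of mixed Hodge structures, and reduces each summand to a pure generating piece. That summand is identified with $K_i\otimes \Ud_i'$ (and analogously for the other types), and the paper concludes via $\psi_t(X_i^H\otimes H)\simeq K_i\otimes H$.

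The rescaling by signs or powers of $2\pi\sqrt{-1}$ in your Step~3 is unnecessary. If you define the identifications $N^r(H)\simeq H(r)$ by transporting along $f$ and $g$ themselves, every component is automatically the identity, as in the paper.
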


\begin{proof}
Replacing $\calM$ by $G(\calM)$, we may assume that $\calM=G(\calM)$.
Let (\ref{mar3-3}) be the corresponding diagram.
Since the morphisms $f$, $g$, $N_\Vd$ and $N_\Ud$ are morphisms of mixed Hodge structures,
$\wt{\Vd_i}$, $\wt{\Vd_i'}$, $\wt{\Ud_i}$ and $\wt{\Ud_i'}$
in Definition~\ref{mar4-3}
can naturally carry mixed Hodge structures.
Therefore,
by Lemma~\ref{jan7-2},
the same holds for ${\Vd_i}$, ${\Vd_i'}$, ${\Ud_i}$ and ${\Ud_i'}$.
We use the same symbols for these mixed Hodge structures.
Since we are assuming that $\calM=G(\calM)$,
for $L=\Vd_i$ or $\Ud_i$ or their variants with primes and tildes, or $\Ker{fN^j_\Vd}$,
$\Ker{gN_\Ud^j}$, $\Ker{N^j_\Vd}$,
$\Ker{N_\Ud^j}$,
we have
\begin{align}\label{mar4-5}
    \GR^W_\bullet L\simeq L.
\end{align}
Recall that 
any short exact sequence of polarizable pure Hodge structures splits.
Combining this fact with the property (\ref{mar4-5}),
we can lift Lemma~\ref{jan7-2},
Lemma~\ref{mar4-8}, Corollary~\ref{ap3-3-c},
Lemma~\ref{ap8-3} and Corollary~\ref{ap8-4}
to the setting of diagrams of mixed Hodge structures.
Therefore, it suffices to prove the assertion
for each direct summand of (\ref{ap8-6}).
Thus, we consider the diagram:
\begin{align}\label{ap8-7}    
\xymatrix@C=40pt{\displaystyle
 \bigoplus_{r= 0}^{i-1}gN_\Ud^r(\Ud_i')\ar@<0.5ex>[r]^-{f,(-1)}&
    \displaystyle\bigoplus_{r= 0}^{i-1}N_\Ud^r (\Ud_i')
    \ar@<0.5ex>[l]^-{g}.
    }
\end{align}
The other direct summands of (\ref{ap8-6}) corresponding to $\Vd_i$, $\Vd_i'$ and $\Ud_i$
can be treated in the same way.
By (\ref{mar4-5}), we may assume that $\Ud_i'$ is pure.
Note that $N_\Ud^r(\Ud'_i)\simeq \Ud_i'(r)$
and $gN_\Ud^r(\Ud'_i)\simeq \Ud_i'(r)$.
The components
\begin{align*}
    gN_\Ud^r(\Ud'_i)&\xrightarrow{f} N_\Ud^{r+1}(\Ud'_i)(-1)\quad  (0\leq r\leq i-2)\quad \text{and}\\ 
gN_\Ud^r(\Ud_i')&\xleftarrow{g} N_\Ud^r(\Ud_i')\quad (0\leq r\leq i-1)
\end{align*}
of (\ref{ap8-7}) are identified with the identity morphisms
of $\Ud_i'(r)$.
Moreover, the restriction of $f$ to $gN_{\Ud}^{i-1}(\Ud_i')$ is zero.
Hence,
the diagram (\ref{ap8-7})
is identified with
the diagram
\begin{align}\label{ap8-8}    
\xymatrix@C=40pt{ 
K_i\otimes \Ud_i'\ar@<0.5ex>[r]^-{f\otimes \iden,(-1)}&
   K_i\otimes \Ud_i',
    \ar@<0.5ex>[l]^-{g\otimes \iden}
    }
\end{align}
where $f\colon K_i\to K_i(-1)$ and
$g\colon K_i\to K_i$ are those defined just before Definition~\ref{ap8-0}.
By the convention fixed just before the statement of this theorem,
we also denote by $\Ud_i'$ the constant Hodge module on $\CC_t$ associated to the Hodge structure $\Ud_i'$.
Then, in this case,
we have
\begin{align*}
    \psi_{t}(A_i^H\otimes \Ud_i')&\simeq \psi_t(A_i^H)\otimes \Ud_i'
    \simeq K_i\otimes \Ud_i',\quad \mbox{and}\\
 \phi_{t}(A_i^H\otimes \Ud_i')&\simeq \phi_t(A_i^H)\otimes \Ud_i'\simeq K_i\otimes \Ud_i'.
\end{align*}
Hence, the mixed Hodge module corresponding to the diagram (\ref{ap8-8})
is isomorphic to $A_i^H\otimes \Ud_i'$.
This completes the proof.
\end{proof}
%ここは4/8に議論がある。non-characteristic inverseを暗に使ってる。

\begin{defi}\label{may21-1}
A split graded model is a diagram $G$ of mixed Hodge structures equipped with a finite direct sum decomposition
\[G=\bigoplus_{\alpha}G_\alpha\]
such that each $G_\alpha$ is a diagram corresponding to a mixed Hodge module of the form $X_i^H\otimes L$, 
where $X\in\{A,B,C,D\}$ and $L$ is a pure Hodge structure.
We also call a mixed Hodge module whose corresponding diagram is a split graded model a split graded model.
\end{defi}

The $\Vd$-part and the $\Ud$-part of a split graded model are direct sums of Tate twists of pure Hodge structures, 
and the morphisms $f$ and $g$ are direct sums of identity maps and zero maps.
Theorem~\ref{may15-5} implies that the diagram corresponding to $G(\calM)$ can be equipped with the structure of a split graded model.

\begin{comment}

Consider a morphism between exact sequences:
\begin{align*}
    \xymatrix{
0\ar[r]&\Ker{N^i_V}\ar[r]\ar[d]^{f}&\Ker{fN^i_V}\ar[r]\ar[d]^{f}&\wt{\Vd_i}\ar[r]\ar[d]^{f}&0\\
    0\ar[r]&\Ker{N_\Ud^{i-1}}(-1)\ar[r]&\Ker{N^i_W}(-1)\ar[r]&\wt{\Ud_i'}(-1)\ar[r]&0.
    }
\end{align*}
Recall that 
any short exact sequences of polarizable pure Hodge structures are split.
Combining this fact with the property (\ref{mar4-5}),
we obtain (non-canonical) decompositions with a commutative diagram:
\begin{align}
    \xymatrix{
    \Ker{fN^i_V}\ar@{}[r]|-{\simeq}\ar[d]^{f}& \Ker{N^i_V}\ar@{}[r]|-{\oplus }\ar[d]^{f}&\wt{\Vd_i}\ar[d]^{f}\\
    \Ker{N^i_W}(-1)\ar@{}[r]|-{\simeq}& \Ker{N^i_W}(-1)\ar@{}[r]|-{\oplus }&\wt{\Ud_i'}(-1)
    }    
\end{align}

\begin{align*}
    \wt{\Vd_i}\simeq \Vd_i\oplus g(\wt{\Ud_i'}),\quad 
    \wt{\Vd_i'}\simeq \Vd_i'\oplus g(\wt{\Ud_i}),\quad
    \wt{\Ud_i}\simeq \Ud_i\oplus f(\wt{\Vd_i'}),\quad 
    \wt{\Ud_i'}\simeq \Ud_i'\oplus f(\wt{\Vd_i}).
\end{align*}
Similarly, we have a decomposition:
\[\Ker{N_{\Vd}}\simeq \wt{V_0}\oplus \wt{V_0'}.\]
Then,

for a polarizable pure Hodge structure $L$ and its sub Hodge structure $L'$,
there exists a sub Hodge structure $L''$ with
\[L\simeq L'\oplus L''.\]

    Take an integer $i_0\geq 1$
\end{comment}

\subsection{Classification without the Hodge filtration}

To classify $\MHM_{\mon,\unip}(\CC_t)$ in general, we temporarily forget the Hodge filtrations.

We first recall the following elementary fact.
\begin{lem}\label{may8-1-1}
In this lemma,
by a filtered vector space $(H,W_\bullet H)$,
we mean a finite-dimensional vector space $H$ equipped with a finite increasing filtration $W_\bullet H$, 
i.e. $W_rH=0$ for $r\ll 0$ and $W_rH=H$
for $r\gg 0$.
\begin{enumerate}
    \item \label{may11-2}  For every filtered vector space $(H,W_\bullet H)$,
    there exists an isomorphism of filtered vector spaces $\eta\colon H\simto \GR^W_\bullet H$ such that $\GR^W_\bullet (\eta)=\iden$ on $\GR^W_\bullet H$.
    Here $\GR^W_\bullet H$ is equipped with the filtration
    $W_r\GR^W_\bullet H=\bigoplus_{r'\leq r}\GR^W_{r'} H$.
\item \label{may11-1}
Suppose that
\[0\to (H',W_\bullet H')\to (H,W_\bullet H)\to (H'',W_\bullet H'')\to 0\]
is an exact sequence of filtered vector spaces,
i.e. 
for every $r\in \ZZ$,
the sequence
\[0\to W_rH'\to W_r H\to W_rH''\to 0\]
is exact.
Then, the surjection $H\twoheadrightarrow H''$ admits a filtered section.
Consequently, there is a non-canonical decomposition
\[(H,W_\bullet H)\simeq (H',W_\bullet H')\oplus (H'',W_\bullet H'')\]
as filtered vector spaces.
\end{enumerate}
\end{lem}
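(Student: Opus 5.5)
For part \ref{may11-2}, I would argue by induction on the length of the filtration, that is, on the number of indices $r$ with $\GR^W_r H\neq 0$. Since the filtration is finite, I fix integers $a<b$ with $W_{a}H=0$ and $W_bH=H$. For each $r$ I choose a complement $C_r$ of $W_{r-1}H$ in $W_rH$, which is possible because we work over a field. The quotient map then restricts to an isomorphism $C_r\simto \GR^W_rH$. An easy induction on $r$ gives $W_rH=\bigoplus_{r'\leq r}C_{r'}$, since $W_rH=W_{r-1}H\oplus C_r$. Combining these, I obtain a linear isomorphism
\[\eta\colon H=\bigoplus_r C_r\simto \bigoplus_r\GR^W_rH,\]
which sends $W_rH$ onto $W_r\GR^W_\bullet H=\bigoplus_{r'\leq r}\GR^W_{r'}H$. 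Hence $\eta$ is an isomorphism of filtered vector spaces, and its inverse is filtered as well. Finally, for $x\in C_r$ the class of $\eta(x)$ in $\GR^W_r(\GR^W_\bullet H)=\GR^W_rH$ is exactly the class of $x$, so $\GR^W_\bullet(\eta)=\iden$.

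For part \ref{may11-1}, the plan is to build the filtered section degree by degree and then extend. The main point to check is that exactness of the sequence on each $W_r$ forces the induced maps $\GR^W_rH\to\GR^W_rH''$ to be surjective. This follows from the surjectivity of $W_rH\to W_rH''$ together with the snake lemma, which gives exactness of $0\to \GR^W_rH'\to\GR^W_rH\to\GR^W_rH''\to 0$.

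With this in hand, I choose a complement $C''_r$ of $W_{r-1}H''$ in $W_rH''$ as in part \ref{may11-2}, so that $H''=\bigoplus_r C''_r$. For each $r$ I lift a basis of $C''_r$ to $W_rH$, which is possible by surjectivity of $W_rH\to W_rH''$, and extend linearly. This defines $\sigma_r\colon C''_r\to W_rH$, and I set $\sigma=\bigoplus_r\sigma_r$. Then $\sigma$ is a section of $H\to H''$, and it satisfies $\sigma(W_rH'')=\sigma(\bigoplus_{r'\leq r}C''_{r'})\subset W_rH$, so it is filtered.

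For the decomposition, the map $(H',W)\oplus(H'',W)\to (H,W)$ given by $(x,y)\mapsto x+\sigma(y)$ is a filtered linear isomorphism. It remains to see that its inverse is filtered. Given $z\in W_rH$, write $z=(z-\sigma\pi z)+\sigma\pi z$, where $\pi$ denotes the quotient map. Here $\pi z\in W_rH''$, hence $\sigma\pi z\in W_rH$, so $z-\sigma\pi z$ lies in $W_rH\cap H'=W_rH'$ by exactness at level $r$. Thus both components of the inverse lie in the correct filtration steps. No step seems to be a genuine obstacle; the only point requiring care is this strictness check, namely the equality $W_rH\cap H'=W_rH'$.
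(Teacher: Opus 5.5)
Your proof is correct and complete. The paper gives no argument for this lemma, recalling it only as an elementary fact, so your construction is the standard verification it leaves implicit: complements of $W_{r-1}H$ in $W_rH$ for part 1, lifting bases of complements into $W_rH$ for part 2, and the strictness check $W_rH\cap H'=W_rH'$.

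Two pieces of your write-up are superfluous. You announce an induction on the length of the filtration, but your direct-sum construction never uses it. The snake-lemma exactness on $\mathrm{Gr}^W_r$ is also not needed, since building $\sigma$ uses only surjectivity of $W_rH\to W_rH''$ and exactness at each level $r$.
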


The isomorphism $\eta$ in \ref{may11-2} of Lemma~\ref{may8-1-1} is not unique.
For a diagram (\ref{mar3-3}), we will choose such isomorphisms in a way compatible with the diagram.
Consider the underlying diagram
    \begin{align}\label{may8-4}
    \xymatrix@C=40pt{\Vd_\QQ\ar@<0.5ex>[r]^-{f,\ (-1)}&\Ud_\QQ\ar@<0.5ex>[l]^-{g}}.
\end{align}
In what follows, we only use the underlying $\QQ$-vector spaces.
By abuse of notation, we write $\Vd$ and $\Ud$ for $\Vd_\QQ$ and $\Ud_\QQ$,
respectively.
As observed in the proof of Theorem~\ref{may15-5},
the spaces $\Vd_i$, $\Ud_i$, their variants with primes and tildes, and the images under
$f$ or $g$ carry mixed Hodge structures.
In particular, each of these spaces is equipped with a weight filtration.
In Lemma \ref{may8-6} and Corollary \ref{may12-1} below, 
we work only with the underlying $\QQ$-vector spaces equipped with the weight filtrations, and
ignore the Hodge filtrations.
We note that $f$ induces a morphism $\wt{\Vd'_{i+1}}\to \wt{\Ud_i}(-1)$.
Its image $f(\wt{\Vd'_{i+1}})$ is therefore a subspace of $\wt{\Ud_i}(-1)$.
After applying the Tate twist $(1)$,
we regard $f(\wt{\Vd'_{i+1}})(1)$ as a subspace of $\wt{\Ud_i}$.
We adopt the same convention for the analogous spaces.

\begin{lem}\label{may8-6}
We can choose the decompositions (\ref{mar4-6}) and (\ref{ap1-2})
so that they are compatible with the weight filtrations.
More precisely,
we can choose the decompositions
$\wt{\Vd_{i}}\simeq g(\wt{\Ud_{i+1}'})\oplus \Vd_i$,
$\wt{\Ud_{i}}\simeq f(\wt{\Vd_{i+1}'})(1)\oplus \Ud_i$
so that they are isomorphisms of filtered vector spaces:
\begin{align*}
    (\wt{\Vd_{i}},W_\bullet\wt{\Vd_{i}})&\simeq (g(\wt{\Ud_{i+1}'}),W_\bullet(g(\wt{\Ud_{i+1}'})))\oplus (\Vd_i,W_\bullet \Vd_{i}),\\
        (\wt{\Ud_{i}},W_\bullet\wt{\Ud_{i}})&\simeq (f(\wt{\Vd_{i+1}'})(1),W_\bullet(f(\wt{\Vd_{i+1}'})(1)))\oplus (\Ud_i,W_\bullet \Ud_{i}),
\end{align*}
and analogous isomorphisms are induced by the other decompositions appearing in
(\ref{mar4-6}) and (\ref{ap1-2}).
In particular,
after regarding $\Vd_{i}$ (non-canonically) as a subspace of $\Vd$ as in Corollary~\ref{ap3-3-c},
the weight filtration of $\Vd_{i}$ as a mixed Hodge structure coincides with the filtration of $\Vd_{i}$ induced by $W_\bullet \Vd$.
Analogous statements hold for $\Ud_i$,
for the variants of $\Vd_i$ and $\Ud_i$ with tildes and primes,
and for their images under $f$ or $g$.
\end{lem}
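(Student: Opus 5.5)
The plan is to redo the construction of Lemma~\ref{jan7-2} through Corollary~\ref{ap8-4} in the category of filtered $\QQ$-vector spaces $(H,W_\bullet H)$, and to replace every splitting by a filtered splitting.

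First I will record which filtrations are in play. All spaces occurring there are subquotients of $\Vd$ and $\Ud$ formed from kernels, images, sums and intersections of morphisms of mixed Hodge structures built from $f,g,N_\Vd,N_\Ud$:
\begin{itemize}
\item $\Ker fN_\Vd^i$, $\Ker N_\Vd^i$, and the analogous spaces in $\Ud$;
\item the tilde spaces and $\Vd_i$, $\Vd_i'$, $\Ud_i$, $\Ud_i'$;
\item the images $f(\cdot)$ and $g(\cdot)$ of these spaces.
\end{itemize}
These lie in the abelian category of mixed Hodge structures. By strictness of morphisms with respect to $W$, the weight filtration of each such space as a mixed Hodge structure equals the filtration induced from $W_\bullet\Vd$ or $W_\bullet\Ud$, taken as a sub or quotient. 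Likewise the inclusions $f,g$ of Lemma~\ref{jan7-1} are strict filtered injections. Therefore the exact sequences of Lemma~\ref{jan7-2} and the rows of the diagrams in Lemma~\ref{mar4-8} are exact sequences of filtered vector spaces, in the sense of Lemma~\ref{may8-1-1}\ref{may11-1}. The reason is that $\GR^W$ is exact on mixed Hodge structures, so $0\to W_rA'\to W_rA\to W_rA''\to0$ is exact.

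Next, by Lemma~\ref{may8-1-1}\ref{may11-1}, each surjection in Lemma~\ref{jan7-2} admits a filtered section. This gives the filtered decompositions in (\ref{mar4-6}), for instance $\wt{\Vd_i}\simeq g(\wt{\Ud'_{i+1}})\oplus\Vd_i$ and $\wt{\Ud_i}\simeq f(\wt{\Vd'_{i+1}})(1)\oplus\Ud_i$. The Tate twist only reindexes $W$ and does not affect the argument.

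For (\ref{ap1-2}) I will rerun the inductive proof of Lemma~\ref{mar4-8}. I start with filtered sections $s$ of $\Ker N_\Ud^{i_0}\to\wt{\Ud'_{i_0}}$ and $t$ of $\Ker fN_\Vd^{i_0-1}\to\wt{\Vd_{i_0-1}}$, which exist by Lemma~\ref{may8-1-1}\ref{may11-1}. I then check that the modified section $\wt t=(gs,t)$ is again filtered. It is the composite of three maps:
\begin{itemize}
\item the filtered decomposition from the previous step;
\item the filtered isomorphism $g^{-1}$ on $g(\wt{\Ud'_{i_0}})$, which is filtered because $g$ is strict;
\item the filtered maps $gs$ and $t$.
\end{itemize}
Descending induction on $i$ then produces filtered sections for all $i$, and hence filtered versions of (\ref{ap1-2}).

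The final claim is that, once $\Vd_i$ is regarded as a subspace of $\Vd$ via these sections, its own weight filtration equals the one induced from $W_\bullet\Vd$. A filtered section $\sigma$ of a strict surjection $\pi$ satisfies $\pi(\sigma(H)\cap W_r)=W_r H$. Since $\sigma$ is injective with filtered left inverse $\pi$, the filtration induced on $\sigma(H)$ is exactly $\sigma(W_\bullet H)$. Composing the chain of embeddings $\Vd_i\hookrightarrow\wt{\Vd_i}\hookrightarrow\Ker fN_\Vd^i\subset\Vd$, each of which is filtered with filtered retraction, gives the claim. For images such as $fN_\Vd^r(\Vd_i)$, the claim follows because $f$ and $g$ are strict.

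The step I expect to be the main obstacle is the compatibility check in the induction of Lemma~\ref{mar4-8}: the sections on different rows must be filtered and commute with $f,g$ simultaneously. My plan is to reduce this to strictness of $f,g$ together with the fact that the composite of filtered maps and inverses of strict filtered isomorphisms is filtered.
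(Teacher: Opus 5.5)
Your proposal is correct and follows the paper's own one-line argument: filtered sections from part~\ref{may11-1} of Lemma~\ref{may8-1-1} combined with strictness of morphisms of mixed Hodge structures with respect to $W$, with the inductive construction of Lemma~\ref{mar4-8} spelled out in the filtered setting. One point needs tightening. For images such as $fN_V^r(V_i)$, strictness of $f\colon V\to U(-1)$ alone does not suffice, because the embedded copy of $V_i$ is not a sub-mixed Hodge structure of $V$; the argument actually rests on the induced maps of Lemma~\ref{jan7-1} being injective and strict (so their restrictions to arbitrary subspaces are strict) together with the commutativity of the chosen sections in (\ref{ap1-2}).
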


\begin{proof}
This follows from \ref{may11-1} of Lemma~\ref{may8-1-1} and the strictness
of morphisms of mixed Hodge structures with respect to the weight filtrations.
\end{proof}

Lemma~\ref{may8-6} immediately gives the following corollary.

\begin{cor}\label{may12-1}
    We can choose the decomposition (\ref{ap8-6}) in Corollary~\ref{ap8-4} so that it is compatible with the weight filtrations.
More explicitly, 
as a diagram of filtered $\QQ$-vector spaces with respect to the weight filtrations,
the diagram (\ref{may8-4}) admits the following decomposition:
    \begin{equation}       
    \begin{aligned}
 &\bigoplus_{i\geq 0}\left(   \xymatrix{ \displaystyle\bigoplus_{r=0}^{i}N_{\Vd}^r(\Vd_i)(r)\ar@<0.5ex>[r]^-{f,\ (-1)}&
    \displaystyle\bigoplus_{r=0}^{i-1}fN_{\Vd}^r (\Vd_i)(r+1)
    \ar@<0.5ex>[l]^-{g}
    }\right)\\
\oplus  &\displaystyle\bigoplus_{i\geq 1}\left(   \xymatrix{ \displaystyle\bigoplus_{r= 0}^{i-1}N_{\Vd}^r(\Vd_i')(r)\ar@<0.5ex>[r]^-{f,\ (-1)}&
    \displaystyle\bigoplus_{r= 0}^{i-1}fN_{\Vd}^r (\Vd_i')(r+1)
    \ar@<0.5ex>[l]^-{g}
    }\right)\\
    \oplus 
&\bigoplus_{i\geq 0}   
\left(    \xymatrix{
 \displaystyle\bigoplus_{r=0}^{i-1}gN_\Ud^r(\Ud_i)(r)\ar@<0.5ex>[r]^-{f,\ (-1)}&
    \displaystyle\bigoplus_{r=0}^iN_\Ud^r (\Ud_i)(r)
    \ar@<0.5ex>[l]^-{g}}\right)\\
    \oplus 
    &\bigoplus_{i\geq 1}   
\left(    \xymatrix{\displaystyle
 \bigoplus_{r= 0}^{i-1}gN_\Ud^r(\Ud_i')(r)\ar@<0.5ex>[r]^-{f,\ (-1)}&
    \displaystyle\bigoplus_{r= 0}^{i-1}N_\Ud^r (\Ud_i')(r)
    \ar@<0.5ex>[l]^-{g}}\right).
\end{aligned}\label{may12-2} 
    \end{equation}

%Here, each term is equipped with the weight filtration, and we omit the symbols $W_\bullet$.

\end{cor}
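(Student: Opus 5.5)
The statement to prove is Corollary~\ref{may12-1}: the decomposition (\ref{ap8-6}) of the underlying diagram (\ref{may8-4}) can be chosen compatibly with the weight filtrations. The plan is to rerun the construction of Section~\ref{sectionperv} (Lemma~\ref{jan7-2}, Lemma~\ref{mar4-8}, Corollary~\ref{ap3-3-c}, Lemma~\ref{ap8-3}, Corollary~\ref{ap8-4}) in the category of finite-dimensional filtered $\QQ$-vector spaces. At every step, each chosen splitting or section will be taken filtered, which Lemma~\ref{may8-6} allows.

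First, consider the spaces $\Ker N_\Vd^i$, $\Ker fN_\Vd^i$, $\Ker N_\Ud^i$, $\Ker gN_\Ud^i$, their quotients $\VT{i}$, $\VTp{i}$, $\UT{i}$, $\UTp{i}$, and the subquotients $\Vd_i,\dots$. These are kernels, images and cokernels of morphisms of mixed Hodge structures. By strictness of such morphisms with respect to $W$, the weight filtration of each of them, as a mixed Hodge structure, equals the filtration induced from $W_\bullet\Vd$ or $W_\bullet\Ud$. Consequently every short exact sequence in Lemma~\ref{jan7-2} and in the rows of Lemma~\ref{mar4-8} is exact as a sequence of filtered spaces, in the strict sense of \ref{may11-1} of Lemma~\ref{may8-1-1}. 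Moreover $f$ and $g$ are filtered maps, up to the twist $(-1)$, which shifts the filtration by $2$. They are also strict, so an image such as $f(\Vd_i)$ carries its induced filtration, isomorphic to that of $\Vd_i$ shifted.

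Second, fix filtered splittings (\ref{mar4-6}), as in Lemma~\ref{may8-6}. Then run the inductive construction in the proof of Lemma~\ref{mar4-8}, starting from filtered sections $s$ and $t$, which exist by \ref{may11-1} of Lemma~\ref{may8-1-1}. The modified section $\wt t=(gs,t)$ is then filtered, since it is a composite of filtered maps: the filtered splitting, the strict isomorphism $g^{-1}\colon g(\UTp{\Iz})\simto\UTp{\Iz}$, and the maps $s$, $t$, $g$. Inductively, all sections in (\ref{ap1-2}) are filtered, and the diagrams still commute.

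Third, the proof of Lemma~\ref{ap8-3} only applies $f$, $g$ and the chosen splittings repeatedly. Each summand $N_\Vd^r(\Vd_i)$ and $fN_\Vd^r(\Vd_i)$ is the image of $\Vd_i$ under an injective strict filtered map. With the Tate twists $(r)$ and $(r+1)$ recording the weight shift, each summand is thus filtered-isomorphic to $\Vd_i(r)$ or $\Vd_i(r+1)$. So the resulting direct sum decompositions are decompositions of filtered spaces, and the same holds for $\Vd_i'$, $\Ud_i$ and $\Ud_i'$. Summing as in Corollary~\ref{ap8-4} gives (\ref{may12-2}).

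The main obstacle is the second step. One must check that the filtered property survives the repeated modification of sections in Lemma~\ref{mar4-8}. In particular, identifying $g(\UTp{\Iz})$ with $\UTp{\Iz}$ is filtered only up to the correct twist, and that twist must be tracked. Strictness of $g$ and $f$ with respect to $W$ handles this.
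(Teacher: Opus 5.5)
Your proposal is correct and is essentially the paper's argument. The paper derives the corollary immediately from Lemma~\ref{may8-6}: the splittings (\ref{mar4-6}) and the sections in (\ref{ap1-2}) are chosen filtered using \ref{may11-1} of Lemma~\ref{may8-1-1} together with strictness of morphisms of mixed Hodge structures, and then Lemma~\ref{ap8-3} and Corollary~\ref{ap8-4} are rerun. Your write-up just makes explicit what the paper leaves implicit, in particular that the inductively modified sections in Lemma~\ref{mar4-8} remain filtered (only $f$ carries the twist $(-1)$; $g$ is untwisted, so $g(\UTp{\Iz})\simeq\UTp{\Iz}$ needs no twist bookkeeping).
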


In what follows, we fix such a decomposition (\ref{may12-2}).

\begin{lem}\label{may8-2}
    For the diagram (\ref{may8-4}),
    there exist isomorphisms $\eta_{\Vd}\colon \Vd \simto \GR^W_\bullet \Vd$
    and $\eta_{\Ud}\colon \Ud\simto \GR^W_\bullet \Ud$ as filtered $\QQ$-vector spaces,
    i.e. ignoring the Hodge filtrations,
    such that
$\GR^W_\bullet(\eta_\Vd)$ and
$\GR^W_\bullet(\eta_\Ud)$ are the identity maps and    
the following diagram commutes:
\[
\xymatrix{
\Ud\ar[r]^-g\ar[d]^-{\eta_{\Ud}}&\Vd\ar[r]^-f\ar[d]^-{\eta_{\Vd}}&\Ud(-1)\ar[d]^-{\eta_{\Ud}(-1)}\\
\GR^W_\bullet \Ud\ar[r]^-{\GR^W_\bullet g}&\GR^W_\bullet \Vd\ar[r]^-{\GR^W_\bullet f}&\GR^W_\bullet \Ud(-1).\\
}
\]
\end{lem}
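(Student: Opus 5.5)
The plan is to build $\eta_\Vd$ and $\eta_\Ud$ summand by summand, using the weight-compatible decomposition (\ref{may12-2}) fixed after Corollary~\ref{may12-1}. Both $\Vd$ and $\Ud$, as filtered $\QQ$-vector spaces, are direct sums of the pieces $N_\Vd^r(\Vd_i)(r)$, $fN_\Vd^r(\Vd_i)(r+1)$, and so on. Each piece carries the filtration induced by $W_\bullet\Vd$ or $W_\bullet\Ud$, and by Lemma~\ref{may8-6} this induced filtration is the weight filtration of the corresponding mixed Hodge structure. Since the decomposition is one of filtered spaces, taking $\GR^W_\bullet$ commutes with it. Hence $\GR^W_\bullet\Vd$ and $\GR^W_\bullet\Ud$ decompose as the direct sums of the $\GR^W_\bullet$ of the pieces, and $\GR^W_\bullet f$, $\GR^W_\bullet g$ respect these decompositions. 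It therefore suffices to produce compatible $\eta$'s on each single summand diagram, for instance the chain
\[
\Ud_i'\xrightarrow{g} g(\Ud_i')\xrightarrow{f} N_\Ud(\Ud_i')(1)\xrightarrow{g}\cdots
\]
and the analogous chains for $\Vd_i$, $\Vd_i'$ and $\Ud_i$.

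Within one summand, every map in the chain is an isomorphism of filtered spaces, up to Tate twist, except the last one, which is zero. Indeed, $f$ and $g$ restricted to the successive pieces are isomorphisms onto their images by construction (Lemma~\ref{ap8-3}). They are also strict for the weight filtrations, being morphisms of mixed Hodge structures, and a bijective strict morphism is a filtered isomorphism. On the initial space, say $\Ud_i'$, I would choose by Lemma~\ref{may8-1-1}\ref{may11-2} a filtered isomorphism $\eta_0\colon \Ud_i'\simto\GR^W_\bullet\Ud_i'$ with $\GR^W_\bullet(\eta_0)=\iden$. I would then transport it along the chain, defining $\eta$ on $g(\Ud_i')$ as $\GR^W_\bullet(g)\circ\eta_0\circ g^{-1}$, and continuing in the same way at each step; the Tate twist only relabels the weight index, since $(-1)$ shifts weights by $2$. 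By construction each square in the required diagram commutes on the summand, including the final square, where both horizontal maps are zero. Moreover $\GR^W_\bullet$ of each transported $\eta$ is again the identity, because $\GR^W_\bullet$ is functorial and the conjugating maps cancel.

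Finally I would take $\eta_\Vd$ and $\eta_\Ud$ to be the direct sums of these summand-wise maps. They are filtered isomorphisms with $\GR^W_\bullet=\iden$, and the diagram in the statement commutes because it does so on every summand. The main point requiring care is the first reduction: one needs $\GR^W_\bullet$ of the decomposed diagram to be the decomposed diagram of $\GR^W_\bullet$, with the correct identifications of the twisted pieces such as $f(\wt{\Vd'_{i+1}})(1)$. This is exactly where the weight compatibility of Lemma~\ref{may8-6} and Corollary~\ref{may12-1} is used.
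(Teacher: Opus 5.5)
Your proposal is correct and takes essentially the same approach as the paper: choose a splitting $\eta$ with $\GR^W_\bullet(\eta)=\iden$ on the initial piece of each chain in the weight-compatible decomposition (\ref{may12-2}) via Lemma~\ref{may8-1-1}, transport it along $f$ and $g$ (filtered isomorphisms between consecutive pieces, zero at the end), and take the direct sum. Your remarks on strictness passing to filtered summands and on $\GR^W_\bullet$ commuting with the decomposition spell out what the paper leaves implicit.
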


\begin{proof}
    By \ref{may11-2} of Lemma~\ref{may8-1-1},
we choose an isomorphism 
\[\Vd_{i}\simeq \GR^W_\bullet \Vd_{i}\]
as filtered $\QQ$-vector spaces with respect to the weight filtrations, which induces the identity map on $\GR^W_\bullet \Vd_{i}$.
By the chosen decomposition~(\ref{may12-2}),
$f|_{\Vd_i}\colon \Vd_i\to f(\Vd_i)$ is either zero or an isomorphism.
Hence, in either case, we obtain a filtered isomorphism
\[f\Vd_{i}\simeq \GR^W_\bullet f\Vd_{i},\]
so that the following diagram commutes:
\[\xymatrix{
\Vd_{i}\ar[r]^f\ar[d]^{\simeq}&f\Vd_{i}\ar[d]^{\simeq}\\
\GR^W_\bullet \Vd_{i}\ar[r]^{\GR^W_\bullet f}&\GR^W_\bullet f\Vd_{i}.}\]
By repeating this procedure,
we obtain filtered isomorphisms
\[N^r_\Vd (\Vd_{i})\simeq \GR^W_\bullet N_{\Vd}^r(\Vd_{i}),\quad 
fN^r_\Vd (\Vd_{i})\simeq \GR^W_\bullet fN_{\Vd}^r(\Vd_{i}),
\]
so that the following diagram commutes:
\[
\xymatrix{
\bigoplus_r fN_{\Vd}^r(\Vd_i)(r+1) \ar[r]^-g\ar[d]^{\simeq }&
\bigoplus_r N_{\Vd}^r(\Vd_i)(r)
\ar[r]^-f\ar[d]^{\simeq }&\bigoplus_r fN_{\Vd}^r(\Vd_i)(r)\ar[d]^{\simeq }\\
\bigoplus_r \GR^W_\bullet fN_{\Vd}^r(\Vd_i)(r+1)\ar[r]^-{\GR^W_\bullet g}&\bigoplus_r\GR^W_\bullet  N_{\Vd}^r(\Vd_i)(r)
\ar[r]^-{\GR^W_\bullet f}&\bigoplus_r \GR^W_\bullet fN_{\Vd}^r(\Vd_i)(r).\\
}
\]
We note that the first row is a direct summand of the diagram: 
$\Ud\xrightarrow{g} \Vd\xrightarrow{f} \Ud(-1)$,
and the second row is a direct summand of the diagram:
$\GR^W_\bullet \Ud\xrightarrow{\GR^W_\bullet g} \GR^W_\bullet \Vd\xrightarrow{\GR^W_\bullet f} \GR^W_\bullet \Ud(-1)$.

%By Lemma~\ref{may8-6}, the direct sum decompositions
%$
%\bigoplus_\ell N_{\Vd}^\ell(\Vd_i)$ and
%$\bigoplus_\ell fN_{\Vd}^\ell(\Vd_i)$
%are compatible with the weight filtrations.
%Therefore, taking
%$\GR^W_\bullet$ commutes with these direct sums.

Applying the same argument to all the summands in the decomposition~(\ref{may12-2}), we obtain the desired result.    
\end{proof}

In what follows, we fix isomorphisms $\eta_{\Vd}$ and $\eta_{\Ud}$
satisfying the properties stated in Lemma~\ref{may8-2}.
After tensoring these isomorphisms with $\CC$, we transport the
Hodge filtrations of $\Vd$ and $\Ud$ to $\GR^W_\bullet \Vd$ and $\GR^W_\bullet \Ud$, respectively, and we will describe the resulting filtrations below.

We also fix a decomposition as in (\ref{ap10-2}) of Theorem~\ref{may15-5}. 

%For $M\in \MHM_{\mon}^{\unip}(\CC_t)$ which corresponds to the diagram~(\ref{mar3-3}),
%we fix a decomposition (\ref{ap10-2}).

\begin{defi}
For $1\leq s\leq \ell$,
        we define finite sets of integers $J_{\Vd}(s)$ and $J_{\Ud}(s)$ as
        \begin{align*}
            J_{\Vd}(s):=\left\{
        \begin{array}{cc}
             {[0,i(s)-1]}\cap \ZZ& \mbox{if\ } X^s=A, \\
             {[-1,i(s)-2]}\cap \ZZ& \mbox{if\ } X^s=B,\\
             {[0,i(s)-2]}\cap \ZZ&\mbox{if\ } X^s=C,\\
             {[-1,i(s)-1]}\cap \ZZ&\mbox{if\ } X^s=D,
        \end{array}
        \right.\quad
            J_{\Ud}(s):=[0,i(s)-1]\cap \ZZ,
        \end{align*}
        where $[a,b]\cap \ZZ=\emptyset$ whenever $a>b$.
\end{defi}

By definition, 
the $\Vd$-part and the $\Ud$-part of the diagram corresponding to $(X^s)_{i(s)}^H$
are
\begin{align*}
    \bigoplus_{j\in J_{\Vd}(s)}\QQ^H(j)\mbox{\quad and\quad}
    \bigoplus_{j\in J_{\Ud}(s)}\QQ^H(j),
\end{align*}
respectively.
Therefore,
the $\Vd$-part and the $\Ud$-part of
the diagram corresponding to 
$G(M)$
are
\begin{align}\label{ap15-2}
 \bigoplus_{s=1}^\ell  
 \bigoplus_{j\in J_{\Vd}(s)}H^s(j)\mbox{\quad and\quad}
 \bigoplus_{s=1}^\ell  \bigoplus_{j\in J_{\Ud}(s)}H^s(j),
\end{align}
respectively.
%Let $w_s$ be the weight of the pure Hodge structure $H^s$.
%Then, the weight of $H^s(j)$ is $w_s-2j$.

Thus,
after forgetting the Hodge filtrations,
we obtain the following chain of isomorphisms between diagrams of filtered $\QQ$-vector spaces:
\begin{equation}\label{may14-1}
      \begin{aligned}
    \xymatrix{
    \Vd\ar@<0.5ex>[r]^-{f,\ (-1)}\ar[d]^{\simeq }&\Ud\ar@<0.5ex>[l]^-{g}\ar[d]^{\simeq }\\
    \GR^W_\bullet \Vd\ar@<0.5ex>[r]^-{\GR^W_\bullet(f),\ (-1)}\ar[d]^{\simeq }&\GR^W_\bullet \Ud\ar@<0.5ex>[l]^-{\GR^W_\bullet(g)}\ar[d]^{\simeq }\\
    \bigoplus_{s=1}^\ell  
 \bigoplus_{j\in J_{\Vd}(s)}H^s(j)
 \ar@<0.5ex>[r]^-{\wt{f},\ (-1)}
 &
     \bigoplus_{s=1}^\ell  \bigoplus_{j\in J_{\Ud}(s)}H^s(j),    
  \ar@<0.5ex>[l]^-{\wt{g}} 
    }
\end{aligned}
\end{equation}
where the bottom arrows are the direct sums of the following maps:
\begin{align*}
    \wt{f}|_{H^s(j)}&=\left\{
    \begin{array}{cc}
         \iden_{H^s(j)}& j+1\in J_\Ud(s) \\
         0& j+1\notin J_\Ud(s)
    \end{array}
    \right.\quad \text{for\ } j\in J_\Vd(s),\quad \text{and}\\
        \wt{g}|_{H^s(j)}&=\left\{
    \begin{array}{cc}
         \iden_{H^s(j)}& j\in J_\Vd(s) \\
         0& j\notin J_\Vd(s)
    \end{array}
    \right.\quad \text{for\ } j\in J_\Ud(s).
    \end{align*}
In the first case in the definition of $\wt{f}|_{H^s(j)}$, we identify $H^s(j+1)(-1)$ with $H^s(j)$.

Let $\Lambda\subset \ZZ$ be the finite set consisting of the weights of $H^s(j)$ appearing in (\ref{ap15-2}).
We regard $\Lambda$ as an ordered set with the usual order.
Then, for $w\in \Lambda$, we define 
\begin{align}\label{may15-6}
    L_{\Vd,w}:=\bigoplus_{\substack{1\leq s\leq \ell\\
    j\in J_\Vd(s)\\
    \wei{H^s}-2j=w
    }}H^s(j),
    \quad\text{and}\quad
        L_{\Ud,w}:=\bigoplus_{\substack{1\leq s\leq \ell\\
    j\in J_\Ud(s)\\
    \wei{H^s}-2j=w
    }}H^s(j),
\end{align}
where $\wei{H^s}$ is the weight of $H^s$.
For $\taik=\QQ$ or $\CC$, we denote by $L_{\Vd,w,\taik}$ and $L_{\Ud,w,\taik}$
the underlying $\taik$-vector spaces of $L_{\Vd,w}$ and $L_{\Ud,w}$, respectively.
We set $L_{\Vd,w}=0$ and $L_{\Ud,w}=0$ for $w\notin \Lambda$.
Then,
via the fixed identifications (\ref{may14-1}),
$U$ and $V$ together with the induced identifications of their graded pieces,
determine elements
\[[\Vd]\in \Exten{\{L_{\Vd,w}\}_{w\in \Lambda}},\quad \text{and}\quad 
[\Ud]\in \Exten{\{L_{\Ud,w}\}_{w\in \Lambda}}.
\]
Thus, these mixed Hodge structures can be described in terms of upper
triangular matrices, as explained in Subsection~\ref{ssfixedgradedpieces}.
The morphisms
$\wt{f}$ and $\wt{g}$ in the diagram (\ref{may14-1})
induce morphisms of pure Hodge structures
\begin{align}\label{may15-7}
    f_w\colon L_{\Vd,w}\to L_{\Ud,w-2}(-1)\quad \text{and}\quad 
    g_w\colon L_{\Ud,w}\to L_{\Vd,w}.
\end{align}
Moreover, by the commutative diagram (\ref{may14-1}),
we can apply Proposition~\ref{sep28-1} 
to describe the condition that $f$ and $g$ are morphisms of mixed Hodge structures.

Let us summarize the argument above.

\begin{lem}\label{may18-1}
    For a diagram (\ref{mar3-3}),
we fix two isomorphisms of diagrams of filtered $\QQ$-vector spaces, ignoring the Hodge filtrations:
\begin{enumerate}
    \item  an isomorphism chosen as in Lemma~\ref{may8-2}:\[\xymatrix@C=50pt{
    \Vd\ar@<0.5ex>[r]^-{f,\ (-1)}\ar[d]^{\simeq }&\Ud\ar@<0.5ex>[l]^-{g}\ar[d]^{\simeq }\\
    \GR^W_\bullet \Vd\ar@<0.5ex>[r]^-{\GR^W_\bullet(f),\ (-1)}&\GR^W_\bullet \Ud,\ar@<0.5ex>[l]^-{\GR^W_\bullet(g)}}\]
    \item an isomorphism chosen as in Theorem~\ref{may15-5}:
    \[ \xymatrix{   \GR^W_\bullet \Vd\ar@<0.5ex>[r]^-{\GR^W_\bullet(f),\ (-1)}\ar[d]^{\simeq }&\GR^W_\bullet \Ud\ar@<0.5ex>[l]^-{\GR^W_\bullet(g)}\ar[d]^{\simeq }\\
    \bigoplus_{s=1}^\ell  
 \bigoplus_{j\in J_{\Vd}(s)}H^s(j)
 \ar@<0.5ex>[r]^-{\wt{f},\ (-1)}
 &
     \bigoplus_{s=1}^\ell  \bigoplus_{j\in J_{\Ud}(s)}H^s(j),
  \ar@<0.5ex>[l]^-{\wt{g}} 
    }\]
    \end{enumerate}
    and define $L_{\Vd,w}$, $L_{\Ud,w}$, $f_w$ and $g_w$  by (\ref{may15-6}) and (\ref{may15-7}).
    Here,
in the diagrams above, the subscript $\QQ$ for the underlying filtered $\QQ$-vector spaces is omitted.
    We set 
\begin{align*}
    \UpT_\Vd:=\UpT(\{L_{\Vd,w}\}_{w\in \Lambda}),\quad \text{and}\quad 
    \UpT_\Ud:=\UpT(\{L_{\Ud,w}\}_{w\in \Lambda}),
\end{align*}
and define $\UpT_{\Vd,\QQ}$, $\UpT_{\Ud,\QQ}$, $\UpT_{\Vd,F}$ and $\UpT_{\Ud,F}$
as in Definition~\ref{may14-3}.
Then, the mixed Hodge structures on $\Vd$ and $\Ud$,
together with the fixed identifications of their graded pieces,
are classified by
\[[M_{\Vd}]\in \UpT_{\Vd,\QQ}\backslash \UpT_\Vd/\UpT_{\Vd,F},\quad [M_\Ud]\in \UpT_{\Ud,\QQ}\backslash \UpT_\Ud/\UpT_{\Ud,F}.\]
By Proposition~\ref{sep28-1},
the condition that $f$ and $g$ are morphisms of mixed Hodge structures
is equivalent to the following condition:
we can choose representatives 
$M_{\Vd}=(m_{\Vd,ij})_{i,j\in \Lambda}\in \UpT_\Vd$ and $M_{\Ud}=(m_{\Ud,ij})_{ i,j\in \Lambda}\in \UpT_\Ud$
satisfying condition $(*)$ such that the diagram below commutes for every $i,j\in \Lambda$ with $i<j$.
\begin{equation}
    \label{may18-2}
    \begin{aligned}
\xymatrix@C=40pt{L_{\Ud,j,\CC}\ar[r]^{g_j}\ar[d]_{m_{\Ud,ij}}&L_{\Vd,j,\CC}\ar[r]^-{f_j}\ar[d]_{m_{\Vd,ij}}&L_{\Ud,j-2,\CC}(-1)\ar[d]^{m_{\Ud,(i-2)(j-2)}(-1)}\\L_{\Ud,i,\CC}\ar[r]_{g_i}&L_{\Vd,i,\CC}\ar[r]_-{f_i}&L_{\Ud,i-2,\CC}(-1).}
\end{aligned}
\end{equation}
Here, we set $m_{\Vd,ab}=0$ and $m_{\Ud,ab}=0$ whenever $a\notin \Lambda$ or $b\notin \Lambda$.
%\[f_im_{ij}=m_{(i-2)(j-2)}f_j,\quad \text{and}\quad g_im_{ij}=m_{ij}g_j.\]
\end{lem}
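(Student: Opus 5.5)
The plan is to transport everything to the graded side and then apply Proposition~\ref{jun16-2}, Lemma~\ref{may7-1} and Proposition~\ref{sep28-1} componentwise.

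\emph{Step 1: transport the Hodge structures.} Compose the two fixed isomorphisms of (\ref{may14-1}). This identifies $\Vd_\QQ$ and $\Ud_\QQ$, as filtered $\QQ$-vector spaces, with $\bigoplus_w L_{\Vd,w,\QQ}$ and $\bigoplus_w L_{\Ud,w,\QQ}$. By the choice in Lemma~\ref{may8-2}, the identification induces the identity on $\GR^W_\bullet$, and the isomorphism of Theorem~\ref{may15-5} is one of pure Hodge structures on each graded piece. Transporting the Hodge filtrations of $\Vd_\CC$ and $\Ud_\CC$, we obtain mixed Hodge structures on $\bigoplus_w L_{\Vd,w}$ and $\bigoplus_w L_{\Ud,w}$ with the same $\QQ$-structure and weight filtration, whose graded pieces are identified with $L_{\Vd,w}$ and $L_{\Ud,w}$. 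These define elements of $\Exten{\{L_{\Vd,w}\}}$ and $\Exten{\{L_{\Ud,w}\}}$. By Proposition~\ref{jun16-2} they correspond to classes $[M_\Vd]$ and $[M_\Ud]$ in the double quotients, which gives the classification statement.

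\emph{Step 2: transport the morphisms.} Under these identifications, $f$ and $g$ become $\wt f$ and $\wt g$, because (\ref{may14-1}) commutes. These maps are block diagonal in the weight decomposition, with blocks $f_w$ and $g_w$ that are morphisms of pure Hodge structures. So the condition that $f$ and $g$ are morphisms of mixed Hodge structures is the condition that $\wt f\colon H_{M_\Vd}\to H_{M_\Ud}(-1)$ and $\wt g\colon H_{M_\Ud}\to H_{M_\Vd}$ are morphisms.

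\emph{Step 3: choose representatives and apply Proposition~\ref{sep28-1}.} By Lemma~\ref{may7-1}, pick representatives $M_\Vd$ and $M_\Ud$ satisfying condition $(*)$. Replacing a matrix by another representative of the same class changes the mixed Hodge structure only by an isomorphism. To handle this compatibly with $f$ and $g$, I would not re-choose the classes; I would directly pick representatives of the actual transported Hodge filtrations, since the isomorphism class data $H_M$ with fixed $\QQ$-structure depends only on $M\UpT_F$. Concretely, one needs $M$ with $F^pH=M(\bigoplus F^pL_w)$ exactly, and then right-multiplies by $\UpT_F$ as in the proof of Lemma~\ref{may7-1} to achieve $(*)$. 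Left multiplication by $\UpT_\QQ$ is not used, so $\wt f$ and $\wt g$ stay unchanged.

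Next, apply Proposition~\ref{sep28-1} to $\wt g$ with $L=L_{\Ud}$ and $L'=L_{\Vd}$. This gives $g_i m_{\Ud,ij}=m_{\Vd,ij}g_j$. For $\wt f$, note that $H_{M_\Ud}(-1)$ corresponds to the family $L_{\Ud,w-2}(-1)$ placed in weight $w$, with matrix entries $m_{\Ud,(i-2)(j-2)}(-1)$. Condition $(*)$ is preserved under Tate twist, since the Hodge types shift uniformly. The proposition then gives $f_i m_{\Vd,ij}=m_{\Ud,(i-2)(j-2)}(-1)f_j$. Indices outside $\Lambda$ correspond to zero spaces, which justifies the convention $m=0$ there. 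Together these are exactly the commutativity of (\ref{may18-2}). Conversely, given representatives satisfying $(*)$ and (\ref{may18-2}), the same proposition shows that $\wt f$ and $\wt g$ are morphisms.

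\emph{Main obstacle.} The subtle step is the one in Step 3: checking that the representatives satisfying $(*)$ can be chosen \emph{simultaneously} with the fixed identifications. Equivalently, the equivalence must be stated as an ``exists representatives'' condition rather than ``for all representatives''. I would handle this by using only right $\UpT_F$-modification, as described, together with the twist bookkeeping for $\Ud(-1)$.
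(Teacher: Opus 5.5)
Your proposal is correct and takes essentially the same route as the paper: transport the Hodge filtrations along (\ref{may14-1}), classify via Proposition~\ref{jun16-2}, then apply Proposition~\ref{sep28-1} to $\wt{g}$ and to $\wt{f}$ with the twisted matrix $M_\Ud(-1)$. Your care in choosing $M_\Vd$, $M_\Ud$ to represent the transported filtrations exactly, and then normalizing only by right multiplication by $\UpT_F$ as in the proof of Lemma~\ref{may7-1}, makes explicit a point the paper's brief summary leaves implicit; such exact representatives exist because an isomorphism onto some $H_M$ inducing the identity on graded pieces lies in $\UpT_\QQ$ and can be absorbed into $M$.
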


\subsection{Classification theorem}\label{ssclasmonmhm}

In this subsection, we give a description of the isomorphism classes of objects in $\MHM_{\mathrm{mon},\unip}(\CC_t)$.

\begin{defi}\label{may18-5}
    We define a category $\calC$ as follows.
    
    \noindent \underline{Objects}
An object of $\calC$ is a triple $(G,M_{\Vd},M_\Ud)$ satisfying the following conditions.

\begin{enumerate}
    \item $G$ is a split graded model as in Definition~\ref{may21-1}.
We write $G$ as the diagram
       \begin{align}\label{may27-7}
    \xymatrix@C=40pt{\Vd_G\ar@<0.5ex>[r]^-{f,\ (-1)}&\Ud_G\ar@<0.5ex>[l]^-{g}}
\end{align}

        \item 
      Let $\Vd_G=\bigoplus_{w\in \Lambda}L_{\Vd,w}$
        and
        $\Ud_G=\bigoplus_{w\in \Lambda}L_{\Ud,w}$
    be the weight decompositions of 
    the $\Vd$-part and the $\Ud$-part of $G$, respectively,
where $\Lambda$ is a finite set of weights. 
Then,
    \[
    M_{\Vd}=(m_{\Vd,ij})_{i,j\in \Lambda}\in \UpT(\{L_{\Vd,w}\}_{w\in\Lambda}),
    \quad\text{and}\quad 
    M_\Ud=(m_{\Ud,ij})_{i,j\in \Lambda}\in \UpT(\{L_{\Ud,w}\}_{w\in\Lambda}).\]
%    We require that both $M_{\Vd}$ and $M_\Ud$ satisfy condition $(*)$.

\item \label{may19-2}
Using the convention that $L_{\Vd,w}=0$ and $L_{\Ud,w}=0$ for $w\notin \Lambda$,
we define 
$f_w\colon L_{\Vd,w}\to L_{\Ud,w-2}(-1)$ and
$g_w\colon L_{\Ud,w}\to L_{\Vd,w}$ 
so that we have
$f=\bigoplus_{w\in \Lambda}f_w$ and $g=\bigoplus_{w\in \Lambda}g_w$.
Then, $M_{\Vd}$ and $M_\Ud$ are compatible with $f_w$ and $g_w$, i.e.
we have
\[
     f_{i}\circ m_{\Vd,ij}
    =
    m_{\Ud,(i-2)(j-2)}(-1)\circ f_{j},
    \qquad\text{and}\quad
      g_{i}\circ m_{\Ud,ij}
    =
    m_{\Vd,ij}\circ g_{j},
\]
 for any $i,j\in\Lambda$ with $i<j$.
 Here,
 we also set $m_{\Vd,ij}=0$ and $m_{\Ud,ij}=0$ if one of the indices is not in $\Lambda$.
\end{enumerate}
\underline{Morphisms}\  For such an object $(G,M_{\Vd},M_\Ud)$, let ${\Vd}_G^{M_{\Vd}}$ and ${\Ud}_G^{M_\Ud}$ be the mixed Hodge structures obtained from $\Vd_G$ and $\Ud_G$
by modifying their Hodge filtrations by $M_{\Vd}$ and $M_\Ud$, respectively.
Then, by condition \ref{may19-2} above, $f$ and $g$ induce
a diagram $D(G,M_{\Vd},M_\Ud)$ of mixed Hodge structures
  \begin{align}\label{may19-1}
    D(G,M_{\Vd},M_\Ud)\ \colon \ \xymatrix@C=40pt{{\Vd}^{M_{\Vd}}_G\ar@<0.5ex>[r]^-{f,\ (-1)}&{\Ud}^{M_\Ud}_G\ar@<0.5ex>[l]^-{g}}.
\end{align}
For two objects 
$(G,M_{\Vd},M_\Ud)$ and $(G',M'_{\Vd},M'_{\Ud})$,
a morphism $(G,M_{\Vd},M_\Ud)\to (G',M'_{\Vd},M'_{\Ud})$ is a 
morphism 
$D(G,M_{\Vd},M_\Ud)\to D(G',M'_{\Vd},M'_{\Ud})$ of diagrams of mixed Hodge structures.

\end{defi}

\begin{comment}
    Explicitly, a pair $(\alpha_V,\alpha_W)$ satisfying the following conditions.

\begin{enumerate}
    \item 
    $\alpha_V\colon \ov{\Vd_G}\to \ov{V_{G'}}$ and
$\alpha_W\colon \ov{\Ud_G}\to \ov{W_{G'}}$ are morphisms of mixed Hodge structures.
    \item $f_{G'}\circ \alpha_V=\alpha_W(-1)\circ f_G$ and $g_{G'}\circ \alpha_W=\alpha_V\circ g_{G}$.
\end{enumerate}

\end{comment}

\begin{prop}\label{aug4-1-prop}
There is a functor
\[T\colon \calC\to \MHM_{\mathrm{mon},\unip}(\CC_t),\]
which sends an object $(G,M_{\Vd},M_\Ud)\in \calC$ to the object in $\MHM_{\mathrm{mon},\unip}(\CC_t)$ corresponding to the diagram
$D(G,M_{\Vd},M_\Ud)$.
Moreover, $T$ is an equivalence of categories.
\end{prop}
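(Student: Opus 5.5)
We prove that $T$ is well defined, fully faithful, and essentially surjective, working throughout with the equivalence $\MHM_{\mathrm{mon},\unip}(\CC_t)\simto\calG_\unip$ of Corollary~\ref{june11-3}.

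\textbf{Well-definedness and functoriality.} For an object $(G,M_\Vd,M_\Ud)$, the spaces $\Vd_G^{M_\Vd}=H_{M_\Vd}$ and $\Ud_G^{M_\Ud}=H_{M_\Ud}$ are mixed Hodge structures by Definition~\ref{may14-3}. We must check that $f$ and $g$ preserve the modified Hodge filtrations. This is the "if" direction of Proposition~\ref{sep28-1}, which does not use condition $(*)$. The direct argument is that $fM_\Vd=M_\Ud(-1)f$ by condition~\ref{may19-2}, together with the fact that $f$ preserves $\bigoplus_w F^pL_{\Vd,w}$. The $\QQ$-structures and weight filtrations are untouched, and $fg$, $gf$ remain nilpotent since the underlying linear maps are those of $G$. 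Hence $D(G,M_\Vd,M_\Ud)$ is an object of $\calG_\unip$. Morphisms in $\calC$ are defined as morphisms of these diagrams, so $T$ is a functor, and it is fully faithful because Corollary~\ref{june11-3} is an equivalence.

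\textbf{Essential surjectivity.} This is the main step, and we expect most of the bookkeeping to be here. Given $\calM$ with diagram (\ref{mar3-3}), fix the two identifications of Lemma~\ref{may18-1}. The first is the weight-compatible isomorphism $\eta_\Vd,\eta_\Ud$ of Lemma~\ref{may8-2}; the second is the split graded model decomposition of $G(\calM)$ from Theorem~\ref{may15-5}. Composing them and tensoring with $\CC$ gives isomorphisms of filtered $\QQ$-vector spaces, compatible with $f$ and $g$, between $(\Vd,\Ud)$ and $(\Vd_G,\Ud_G)$, where $G$ is the split graded model. Transport the Hodge filtrations of $\Vd$ and $\Ud$ along these maps. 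Because the maps induce the identity on graded pieces, Proposition~\ref{jun16-2} and Lemma~\ref{may7-1} give the following: there are $M_\Vd$, $M_\Ud$ satisfying condition $(*)$ whose transported structures are $H_{M_\Vd}$ and $H_{M_\Ud}$, up to an isomorphism in $\UpT_{\QQ}$.

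Here is the subtle point. The transported structure equals $H_{M}$ only after composing with some $A\in\UpT_\QQ$, that is, with a $\QQ$-rational unipotent automorphism. This automorphism need not commute with $f$ and $g$. To avoid this, do not apply Proposition~\ref{jun16-2} abstractly. Instead, absorb the $\UpT_\QQ$ ambiguity into the choice of the identifications themselves: use the direct description of $H_M$ given there, in which the identification of the transported object with $\bigoplus L_w$ is $\iden$ on the $\QQ$-structure. With the identifications fixed as in Lemma~\ref{may18-1}, the transported Hodge filtration is then literally of the form $M(\bigoplus F^pL_w)$ for some $M\in\UpT$. We can change $M$ only by right multiplication by elements of $\UpT_F$ (as in the proof of Lemma~\ref{may7-1}), which does not change $H_M$, so condition $(*)$ can be achieved.

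Now $f$ and $g$ are morphisms of mixed Hodge structures $H_{M_\Vd}\to H_{M_\Ud}(-1)$ and $H_{M_\Ud}\to H_{M_\Vd}$, and $M_\Vd$, $M_\Ud$ satisfy $(*)$. The "only if" direction of Proposition~\ref{sep28-1}, applied blockwise with $f_w$ and $g_w$ from (\ref{may15-7}), therefore yields exactly the compatibilities of condition~\ref{may19-2}. Hence $(G,M_\Vd,M_\Ud)\in\calC$, and the transported isomorphism gives $D(G,M_\Vd,M_\Ud)\simeq$ (\ref{mar3-3}) in $\calG_\unip$, so $T(G,M_\Vd,M_\Ud)\simeq\calM$. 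Applying Proposition~\ref{sep28-1} to $f$ requires handling the Tate twist $(-1)$, which shifts weight indices by $2$. This is routine: we regard $L_{\Ud,w-2}(-1)$ as the weight-$w$ piece of $\Ud(-1)$, and $(*)$ is preserved under Tate twist.
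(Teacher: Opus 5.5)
Your proposal is correct and follows the same route as the paper. The paper's proof says only that full faithfulness is immediate from the definition of morphisms in $\calC$, and that essential surjectivity follows from Lemma~\ref{may18-1}.

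Your additional care about the $\UpT_\QQ$ ambiguity makes explicit what the citation of Lemma~\ref{may18-1} leaves implicit. You note that, with the identifications fixed, the transported Hodge filtration is literally $M(\bigoplus_w F^pL_{w,\CC})$. You also note that the proof of Lemma~\ref{may7-1} only multiplies on the right by elements of $\UpT_F$. Hence condition $(*)$ is reached without changing the filtration or losing compatibility with $f$ and $g$.
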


\begin{proof}
Full faithfulness follows immediately from the definition of morphisms in $\calC$.
Essential surjectivity follows from Lemma~\ref{may18-1}.
\end{proof}

\begin{rem}
Condition $(*)$ is not imposed in Definition~\ref{may18-5}.
However, for an object $(G,M_V,M_U)\in \calC$,
applying Lemma~\ref{may18-1} to the diagram $D(G,M_V,M_U)$,
we can choose $M_V'$ and $M_U'$ satisfying condition $(*)$ such that 
$(G,M_V,M_U)$ and $(G,M_V',M_U')$ are isomorphic in $\calC$.
\end{rem}

\begin{defi}\label{may21-3}
 We use the notation in Definition~\ref{may18-5}.
\begin{enumerate}
\item \label{may20-6}
For a fixed split graded model $G$,
let $(M_{\Vd},M_\Ud), (M_{\Vd}',M_\Ud')\in \UpT(\{L_{\Vd,w}\}_{w\in\Lambda})\times \UpT(\{L_{\Ud,w}\}_{w\in\Lambda})$ be pairs such that
$(G,M_{\Vd},M_\Ud)$ and $(G,M_{\Vd}',M_\Ud')$ are objects of $\calC$.
We say that
these pairs are equivalent
if there exist
$A_{\Vd}\in \UpT_{\Vd,\QQ}$, $A_{\Ud}\in \UpT_{\Ud,\QQ}$, $B_{\Vd}\in \UpT_{\Vd,F}$ and $B_{\Ud}\in \UpT_{\Ud,F}$
such that
\[\xymatrix@C=40pt{
{\Vd}_{G}\ar[d]_{A_{\Vd}}^{\rotatebox{90}{$\sim$}}\ar@<0.5ex>[r]^-{f,\ (-1)}&{\Ud}_{
G}\ar@<0.5ex>[l]^-{g}\ar[d]_{\rotatebox{90}{$\sim$}}^{A_{\Ud}}
\\{\Vd}_{G}\ar@<0.5ex>[r]^-{f,\ (-1)}&{\Ud}_{
G}\ar@<0.5ex>[l]^-{g},}
\]
commutes and
\[M_{\Vd}'=A_{\Vd}M_{\Vd}B_{\Vd},\ANd M_\Ud'=A_{\Ud}M_\Ud B_{\Ud}.\]

\item  
For $G$ as above, we denote by $[M_{\Vd},M_\Ud]_G$ the equivalence class
of a pair $(M_{\Vd},M_\Ud)$.
We write $\frC_1(G)$ for the quotient set.
    
\item \label{may21-9}Let $G'$ be another split graded model,
with a diagram
     \begin{align*}
    \xymatrix@C=40pt{\Vd_{G'}\ar@<0.5ex>[r]^-{f',\ (-1)}&\Ud_{
G'}\ar@<0.5ex>[l]^-{g'}},
\end{align*}
with weight decompositions
$\Vd_{G'}=\bigoplus_{w\in \Lambda'}L_{\Vd,w}'$
        and
        $\Ud_{G'}=\bigoplus_{w\in \Lambda'}L_{\Ud,w}'$.
 Then, for $[M_{\Vd},M_\Ud]_G\in \frC_1(G)$
 and $[M'_\Vd,M'_\Ud]_{G'}\in \frC_1(G')$,
 we say that
 they are equivalent 
 if there exists an isomorphism $(a_\Vd,a_\Ud)$ of diagrams:
\[\xymatrix@C=40pt{
{\Vd}_{G}\ar[d]_{a_\Vd}^{\rotatebox{90}{$\sim$}}\ar@<0.5ex>[r]^-{f,\ (-1)}&{\Ud}_{G}\ar@<0.5ex>[l]^-{g}\ar[d]_{\rotatebox{90}{$\sim$}}^{a_{\Ud}}
\\{\Vd}_{G'}\ar@<0.5ex>[r]^-{f',\ (-1)}&{\Ud}_{
G'}\ar@<0.5ex>[l]^-{g'},}
\]
satisfying the following properties:
\begin{enumerate}
    \item $a_\Vd\colon \Vd_G\simto \Vd_{G'}$ and $a_\Ud\colon \Ud_G\simto \Ud_{G'}$ decompose as direct sums of morphisms of  pure Hodge structures:
 \[a_\Vd=\bigoplus_{w\in \ZZ} a_{\Vd,w}\colon  \bigoplus_{w\in\Lambda}L_{\Vd,w}\simto \bigoplus_{w\in\Lambda'}L_{\Vd,w}',\ANd
 a_\Ud=\bigoplus_{w\in \ZZ} a_{\Ud,w}\colon  \bigoplus_{w\in\Lambda}L_{\Ud,w}\simto \bigoplus_{w\in\Lambda'}L_{\Ud,w}',
 \]      
 where $L_{\Vd,w}=0$ and $L_{\Ud,w}=0$ (resp. $L_{\Vd,w}'=0$ and $L_{\Ud,w}'=0$) for $w\notin \Lambda$ (resp. $w\notin \Lambda'$).
 \item \label{may21-8} Regarding $M_{\Vd}$ and $M_\Ud$ as endomorphisms,
 we have $[a_\Vd M_{\Vd}a_\Vd^{-1},a_\Ud M_\Ud a_\Ud^{-1}]_{G'}=[M'_\Vd,M'_{\Ud}]_{G'}$.
\end{enumerate}

\item 
The relation defined in \ref{may21-9} gives an
equivalence relation on $\bigsqcup_G\frC_1(G)$.
We define $\frC_2$ to be the
quotient set of $\bigsqcup_G\frC_1(G)$ by this equivalence relation.
For
$[M_{\Vd},M_\Ud]_G\in \bigsqcup_G\frC_1(G)$, we denote by
$[[M_{\Vd},M_\Ud]_G]$ its equivalence class. 

        \end{enumerate}
\end{defi}

\begin{rem}
In the setting of \ref{may20-6} of Definition~\ref{may21-3},
the pairs $(M_{\Vd},M_\Ud)$ and $(M_{\Vd}',M_\Ud')$ are equivalent
if and only if 
the diagrams $D(G,M_{\Vd},M_\Ud)$ and $D(G,M'_\Vd,M'_\Ud)$ of mixed Hodge structures
are isomorphic without changing the fixed split graded model $G$.
More precisely,
there exists an isomorphism of diagrams of mixed Hodge structures:
\[\xymatrix@C=40pt{
{\Vd}^{M_{\Vd}}_{G}\ar[d]_{A_{\Vd}}^{\rotatebox{90}{$\sim$}}\ar@<0.5ex>[r]^-{f,\ (-1)}&{\Ud}^{M_\Ud}_{
G}\ar@<0.5ex>[l]^-{g}\ar[d]_{\rotatebox{90}{$\sim$}}^{A_{\Ud}}
\\{\Vd}_{G}^{M_{\Vd}'}\ar@<0.5ex>[r]^-{f,\ (-1)}&{\Ud}^{M'_\Ud}_{
G}\ar@<0.5ex>[l]^-{g},}
\]
such that $\GR^W_\bullet A_{\Vd}$ and $\GR^W_\bullet A_{\Ud}$ are the identity maps.
On the other hand,
the equivalence relation between
$[M_{\Vd},M_\Ud]_G$ and $[M'_\Vd,M'_\Ud]_{G'}$ defined in \ref{may21-9} of Definition~\ref{may21-3}
means that 
after transporting $(M_{\Vd},M_\Ud)$ along an isomorphism of diagrams $G\to G'$,
the resulting pair is equivalent to $(M'_\Vd,M'_\Ud)$ in the sense of \ref{may20-6} of Definition~\ref{may21-3}.
\end{rem}

\begin{prop}\label{may27-9}
    Let 
 $\calC/\simeq$ and
 $\MHM_{\mathrm{mon},\unip}(\CC_t)/\simeq$ be the sets of isomorphism classes of objects in $\calC$ and $\MHM_{\mathrm{mon},\unip}(\CC_t)$, respectively.
 Then,
 there exists a bijection
 \[\frC_2\simto \calC/\simeq .\]
 Consequently, via the equivalence of categories in Proposition~\ref{aug4-1-prop}, 
 we have a bijection
 \[\frC_2\simto \MHM_{\mathrm{mon},\unip}(\CC_t)/\simeq.\]
\end{prop}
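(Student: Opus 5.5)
The plan is to define the map directly and then check well-definedness, surjectivity and injectivity. Define $\Phi\colon \bigsqcup_G\frC_1(G)\to \calC/\simeq$ by sending $[M_\Vd,M_\Ud]_G$ to the isomorphism class of $(G,M_\Vd,M_\Ud)$.

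\textbf{Step 1: $\Phi$ is well defined.} First I check that $\Phi$ respects the relation in \ref{may20-6} of Definition~\ref{may21-3}. Let $A_\Vd,A_\Ud$ commute with $f,g$, and suppose $M'_\Vd=A_\Vd M_\Vd B_\Vd$. Then
\[
A_\Vd\Bigl(M_\Vd\bigl(\bigoplus_w F^pL_{\Vd,w,\CC}\bigr)\Bigr)=M'_\Vd B_\Vd^{-1}\bigl(\bigoplus_w F^pL_{\Vd,w,\CC}\bigr)=M'_\Vd\bigl(\bigoplus_w F^pL_{\Vd,w,\CC}\bigr),
\]
because $B_\Vd\in\UpT_{\Vd,F}$ preserves $\bigoplus_w F^p$. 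So $A_\Vd\colon \Vd_G^{M_\Vd}\to\Vd_G^{M'_\Vd}$ preserves the Hodge filtrations. It also preserves the $\QQ$-structure and the weight filtration, since $A_\Vd\in\UpT_{\Vd,\QQ}$. The same holds for $A_\Ud$. As $(A_\Vd,A_\Ud)$ commutes with $f,g$, it is an isomorphism $D(G,M_\Vd,M_\Ud)\simeq D(G,M'_\Vd,M'_\Ud)$.

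Next I check the relation in \ref{may21-9}. An isomorphism $(a_\Vd,a_\Ud)$ of diagrams that is graded by weight and consists of morphisms of pure Hodge structures carries $\bigoplus_w F^pL_{\Vd,w,\CC}$ onto $\bigoplus_w F^pL'_{\Vd,w,\CC}$. Hence $a_\Vd$ gives an isomorphism $\Vd_G^{M_\Vd}\simeq \Vd_{G'}^{a_\Vd M_\Vd a_\Vd^{-1}}$, and likewise for $\Ud$. Combining this with the first case, $\Phi$ descends to a map $\frC_2\to\calC/\simeq$.

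\textbf{Step 2: surjectivity.} This is tautological, since every object of $\calC$ is a triple $(G,M_\Vd,M_\Ud)$.

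\textbf{Step 3: injectivity, reduction.} This is the main step. Suppose $\phi=(\phi_\Vd,\phi_\Ud)\colon D(G,M_\Vd,M_\Ud)\simto D(G',M'_\Vd,M'_\Ud)$ is an isomorphism of diagrams of mixed Hodge structures. Set $a_\Vd:=\GR^W_\bullet\phi_\Vd$ and $a_\Ud:=\GR^W_\bullet\phi_\Ud$. The weight filtration of $\Vd_G^{M_\Vd}$ is the direct-sum filtration, so $\GR^W$ is canonically $\bigoplus_w L_{\Vd,w}$ as pure Hodge structures. Therefore $a_\Vd$ is a weight-graded isomorphism of pure Hodge structures. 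Because $f,g$ are graded (direct sums of the $f_w,g_w$), $\GR^W_\bullet$ of the diagram is the diagram $G$ itself, so $(a_\Vd,a_\Ud)$ is an isomorphism of diagrams $G\simto G'$.

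By Step 1, $[M_\Vd,M_\Ud]_G$ is equivalent to $[a_\Vd M_\Vd a_\Vd^{-1},a_\Ud M_\Ud a_\Ud^{-1}]_{G'}$. The isomorphism $\phi\circ(a^{-1}_\Vd,a^{-1}_\Ud)$ then relates the transported data to $(M'_\Vd,M'_\Ud)$ and induces the identity on $\GR^W$. So we may assume $G=G'$ and that $\phi$ induces the identity on graded pieces.

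\textbf{Step 4: injectivity, identity on $\GR^W$.} Here I need $A_\Vd:=\phi_\Vd$, viewed as a matrix on $\bigoplus_w L_{\Vd,w,\CC}$, to lie in $\UpT_{\Vd,\QQ}$, and similarly for $A_\Ud$.
\begin{itemize}
\item $\phi_\Vd$ preserves the $\QQ$-structure, since it is a morphism of mixed Hodge structures.
\item It is upper triangular, since it preserves $W$ and the weight filtration is the direct-sum filtration.
\item Its diagonal entries are the identity, since $\GR^W\phi_\Vd=\iden$.
\end{itemize}
It commutes with $f,g$ by assumption. Set $B_\Vd:=M_\Vd'^{-1}A_\Vd M_\Vd\in\UpT_\Vd$. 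Since $\phi_\Vd$ maps $F^p$ of $\Vd_G^{M_\Vd}$ onto $F^p$ of $\Vd_G^{M'_\Vd}$, the matrix $B_\Vd$ maps $\bigoplus_w F^pL_{\Vd,w,\CC}$ onto itself. Because it is block upper triangular with identity diagonal, and the $F^p$-subspace is the direct sum of the blockwise $F^p$'s, each off-diagonal block $\beta_{ij}$ sends $F^pL_j$ into $F^pL_i$. To see this, apply $B_\Vd$ to $\iota_j(x)$ and project to each component. Hence $B_\Vd\in\UpT_{\Vd,F}$, and $M'_\Vd=A_\Vd M_\Vd B_\Vd^{-1}$. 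The same argument applies to $\Ud$, giving equivalence in $\frC_1(G)$.

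The final bijection with $\MHM_{\mathrm{mon},\unip}(\CC_t)/\simeq$ follows by composing with the equivalence $T$ of Proposition~\ref{aug4-1-prop}. I expect the only delicate point to be the bookkeeping in Step 3: checking that $\GR^W$ of $D(G,M_\Vd,M_\Ud)$ is canonically $G$ compatibly with $f,g$. This holds because the maps $f,g$ of $G$ are already weight-graded.
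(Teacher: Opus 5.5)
Your proof is correct and follows essentially the same route as the paper. The paper also takes $\GR^W_\bullet$ of the given isomorphism to identify $G$ with $G'$, notes that composing with the inverse graded isomorphism gives a map with identity on $\GR^W$ (hence in $\UpT_\QQ$), and extracts the $\UpT_F$ factor from preservation of the Hodge filtration; the differences are cosmetic: you transport $(M_\Vd,M_\Ud)$ forward to $G'$ rather than pulling $(M'_\Vd,M'_\Ud)$ back to $G$, and you spell out the well-definedness checks that the paper leaves to the definitions.
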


\begin{proof}
For a fixed $G$,
we have a natural map $\frC_1(G)\to \calC/\simeq$,
which sends $[M_{\Vd},M_\Ud]_G$ to the isomorphism class of $(G,M_{\Vd},M_\Ud)\in \calC$.
By the definition of the equivalence relation in \ref{may20-6},
this map is well-defined.
Moreover, if
$[M_{\Vd},M_\Ud]_G$ and $[M'_\Vd,M'_\Ud]_{G'}$ are equivalent in $\bigsqcup_G\frC_1(G)$,
then
$(G,M_{\Vd},M_\Ud)$ and $(G',M'_\Vd,M'_\Ud)$ are also isomorphic in $\calC$.
Therefore,
we obtain a map $\frC_2\to \calC/\simeq$.
The surjectivity is clear.
Let us show the injectivity.
For $[[M_{\Vd},M_\Ud]_G]$ and $[[M'_\Vd,M'_\Ud]_{G'}]$ in $\frC_2$,
suppose that $(G,M_{\Vd},M_\Ud)$ and $(G',M'_\Vd,M'_\Ud)$ are isomorphic in $\calC$.
This means that
we have isomorphisms 
$P_\Vd\colon {\Vd}^{M_{\Vd}}_{G}\to {\Vd}^{M'_\Vd}_{G'}$ and
$P_\Ud\colon {\Ud}^{M_\Ud}_{G}\to {\Ud}^{M'_\Ud}_{G'}$,
which define an isomorphism between the diagrams
$D(G,M_{\Vd},M_\Ud)$ and $D(G',M'_\Vd,M'_\Ud)$.
Let $\GR^W_\bullet P_\Vd$ and $\GR^W_\bullet P_\Ud$ be the associated graded isomorphisms.
Moreover,
we define upper triangular matrices in $\UpT_\Vd$ and $\UpT_\Ud$
\[M''_\Vd:=(\GR^W_\bullet P_\Vd)^{-1}M_{\Vd}'\GR^W_\bullet P_\Vd,\qquad M''_\Ud:=(\GR^W_\bullet P_\Ud)^{-1}M_\Ud'\GR^W_\bullet P_\Ud,
\]
respectively.
Set $A_{\Vd}:=(\GR^W_\bullet P_{\Vd})^{-1}P_{\Vd}$ and
$A_{\Ud}:=(\GR^W_\bullet P_{\Ud})^{-1}P_{\Ud}$.
Then, we obtain the following commutative diagram:

\[\xymatrix@C=40pt{
&{\Vd}^{M_{\Vd}}_{G}\ar[d]_{P_{\Vd}}^{\rotatebox{90}{$\sim$}}\ar@<0.5ex>[r]^-{f,\ (-1)}
 \ar `l[ld]`[dd]_-{A_{\Vd}}[dd]
&\ar `r[rd]`[dd]^-{A_{\Ud}}[dd]{\Ud}^{M_\Ud}_{
G}\ar@<0.5ex>[l]^-{g}\ar[d]_{\rotatebox{90}{$\sim$}}^{P_{\Ud}}&
\\&{\Vd}_{G'}^{M_{\Vd}'}\ar@<0.5ex>[r]^-{f',\ (-1)}&{\Ud}^{M'_{\Ud}}_{
G'}\ar@<0.5ex>[l]^-{g'}&
\\&{\Vd}_{G}^{M''_{\Vd}}\ar@<0.5ex>[r]^-{f,\ (-1)}
\ar[u]_{\rotatebox{90}{$\sim$}}^{\GR^W_\bullet P_{\Vd}}
&{\Ud}^{M''_{\Ud}}_{
G}\ar@<0.5ex>[l]^-{g}
\ar[u]^{\rotatebox{90}{$\sim$}}_{\GR^W_\bullet P_{\Ud}}
.&}
\]
Since 
$\GR^W_\bullet A_{\Vd}$ and $\GR^W_\bullet A_{\Ud}$ are the identity maps,
$A_{\Vd}\in \UpT_{\Vd,\QQ}$ and $A_{\Ud}\in \UpT_{\Ud,\QQ}$.
Moreover,
since $A_{\Vd}^{-1}$ and $A_{\Ud}^{-1}$ are morphisms of mixed Hodge structures,
$B_{\Vd}:=M_{\Vd}^{-1}A_{\Vd}^{-1}M''_{\Vd}$ and $B_{\Ud}:=M_\Ud^{-1}A_{\Ud}^{-1}M''_{\Ud}$
are in $\UpT_{\Vd,F}$ and $\UpT_{\Ud,F}$, respectively.
Then,
the equalities $A_{\Vd}M_{\Vd}B_{\Vd}=M_{\Vd}''$ and $A_{\Ud}M_\Ud B_{\Ud}=M''_{\Ud}$
imply
that
the pairs $(M_{\Vd},M_\Ud)$ and $(M''_{\Vd},M''_{\Ud})$ are equivalent in the sense of
\ref{may20-6}.
Hence, $[M_{\Vd},M_\Ud]_G=[M''_{\Vd},M''_{\Ud}]_G$ in $\frC_1(G)$.
Furthermore,
by the definition of the equivalence relation in
\ref{may21-9},
$[M'_{\Vd},M'_{\Ud}]_{G'}$ and $[M''_{\Vd},M''_{\Ud}]_{G}$ are equivalent in $\bigsqcup_{G''}\frC_1(G'')$.
Hence,
we obtain 
$[[M_{\Vd},M_\Ud]_G]=[[M'_{\Vd},M'_{\Ud}]_{G'}]$ in $\frC_2$.
This proves the injectivity of $\frC_{2}\to \calC/\simeq$,
and we thus obtain the bijection $\frC_{2}\simto \calC/\simeq$.
\end{proof}

To describe the isomorphism classes of objects in $\calC$, let us consider the constraints on $M_{\Vd}$ and $M_\Ud$ for an object of $\calC$.
The next lemma follows immediately from the definition.

\begin{lem}\label{may30-1}
    For $(G,M_{\Vd},M_\Ud)\in \calC$,
    write $G=\bigoplus_{s=1}^\ell (X^s)^H_{i(s)}\otimes H^s$ as in (\ref{ap10-2})
    and 
let $L_{\Vd,w}$ and $L_{\Ud,w}$ be defined as in (\ref{may15-6}).
Let $w_s$ denote the weight of $H^s$.
For $1\leq s\leq \ell$, $j\in \ZZ$ and $Z\in \{\Vd,\Ud\}$, we set 
\[H_Z(s,j)=\begin{cases}
    H^s(j)&j\in J_Z(s)\\
    0&j\notin J_Z(s),
\end{cases}\]
and regard it as a direct summand of $L_{Z,w_s-2j}$ if $j\in J_Z(s)$,
and as the zero summand otherwise.
Then, the morphism
$H_{\Vd}(s,p)\to H_{\Ud}(s,p+1)(-1)$
(resp. $H_{\Ud}(s,p)\to H_{\Vd}(s,p)$)
induced by $f$ (resp. $g$)
is 
\begin{align*}
\begin{cases}
    \iden & p\in J_{\Vd}(s),\ p+1\in J_{\Ud}(s)\\
    0& \text{otherwise}
\end{cases}\quad 
\left(resp.\ 
\begin{cases}
    \iden & p\in J_{\Ud}(s),\ p\in J_{\Vd}(s)\\
    0& \text{otherwise}
\end{cases}
\right).
\end{align*}
In the first case where the map is the identity, we identify $H^s(p+1)(-1)$ with $H^s(p)$.
Moreover, for $1\leq s,t\leq \ell$ and $p,q\in \ZZ$,
by using the above description of $f$ and $g$,
we obtain the following commutative diagram:
\begin{equation}
    \label{may30-2}
\begin{aligned}
    \xymatrix@C=40pt{H_{\Ud}(s,p)\ar[r]^{g}\ar[d]&H_{\Vd}(s,p)\ar[r]^-{f}\ar[d]&H_{\Ud}(s,p+1)(-1)\ar[d]
\\
H_{\Ud}(t,q)\ar[r]^{g}&H_{\Vd}(t,q)\ar[r]^-{f}&H_{\Ud}(t,q+1)(-1).
}
\end{aligned}
\end{equation}
Here, the vertical arrows are the corresponding components of the vertical arrows in
\[
\xymatrix@C=40pt{
L_{\Ud,w}\ar[r]^{g_w}\ar[d]_{m_{\Ud,w'w}}&L_{\Vd,w}\ar[r]^-{f_{w}}\ar[d]^{m_{\Vd,w'w}}&L_{\Ud,w-2}(-1)\ar[d]^{m_{\Ud,(w'-2)(w-2)}(-1)}\\
L_{\Ud,w'}\ar[r]^{g_{w'}}&L_{\Vd,w'}\ar[r]^-{f_{w'}}&L_{\Ud,w'-2}(-1),
}
\]
where $w=w_s-2p$ and $w'=w_t-2q$.

\end{lem}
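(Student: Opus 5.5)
The plan is to read everything off the explicit shape of the split graded model $G$ together with the block structure of $M_\Vd$ and $M_\Ud$. First I will identify the components of $f$ and $g$. By Definition~\ref{may21-1} and the construction of $(X^s)^H_{i(s)}$ before Definition~\ref{ap8-0}, the diagram of $(X^s)^H_{i(s)}\otimes H^s$ is the diagram of the corresponding $K$-pair tensored with $H^s$. Its $\Vd$-part is $\bigoplus_{j\in J_\Vd(s)}H^s(j)$ and its $\Ud$-part is $\bigoplus_{j\in J_\Ud(s)}H^s(j)$. The maps $f$ and $g$ are the identity between consecutive summands and zero at the ends, which is exactly the content of the maps $\wt f,\wt g$ after (\ref{may14-1}). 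Rewriting this with the notation $H_Z(s,j)$, which is set to zero outside $J_Z(s)$, gives the stated formula. Since $H^s(p)$ has weight $w_s-2p$, the map $f\colon H_\Vd(s,p)\to H_\Ud(s,p+1)(-1)$ is a component of $f_w$ with $w=w_s-2p$. Similarly, the $g$-component is a component of $g_w$.

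Next I will obtain the commutative diagram (\ref{may30-2}). Decompose each $L_{Z,w}$ as the direct sum of the $H_Z(s,p)$ with $w_s-2p=w$. Then each block $m_{Z,w'w}$ has components $H_Z(s,p)\to H_Z(t,q)$ for $w=w_s-2p$ and $w'=w_t-2q$. In the $\Ud(-1)$-column the relevant block is $m_{\Ud,(w'-2)(w-2)}(-1)$, whose $(s,p+1)\to(t,q+1)$ component is the right vertical arrow; note that $w_s-2(p+1)=w-2$. Condition~\ref{may19-2} of Definition~\ref{may18-5} says the large diagram commutes. This uses the convention $m=0$ for indices outside $\Lambda$, and the case $i=j$ is trivial since the diagonal entries are identities. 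Pre-composing with the inclusion of $H_\Ud(s,p)$ (resp.\ $H_\Vd(s,p)$) and post-composing with the projection to the $(t,q)$ summand then gives the commutative squares. This works because $f$ and $g$ are block-diagonal with respect to these summand decompositions: $f$ sends $H_\Vd(s,p)$ only into $H_\Ud(s,p+1)$, and $g$ sends $H_\Ud(s,p)$ only into $H_\Vd(s,p)$. Hence taking a component of a composite involves only one intermediate summand.

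The only point needing care is the bookkeeping of summands that are zero. When $p\notin J_Z(s)$, the corresponding object is $0$ and the square commutes trivially. The Tate-twist identification $H^s(p+1)(-1)=H^s(p)$ must also be applied consistently. Neither step is a real obstacle; the argument is a direct unwinding of the definitions.
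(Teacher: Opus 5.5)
Your proposal is correct and takes the same route as the paper, which simply says the lemma follows immediately from the definitions. You read off the components of $f$ and $g$ from the diagrams of $(X^s)^H_{i(s)}\otimes H^s$, and you obtain the squares (\ref{may30-2}) by extracting components from condition~3 of Definition~\ref{may18-5}, using that $f$ (resp.\ $g$) sends the $(s,p)$-summand only into the $(s,p+1)$-summand (resp.\ the $(s,p)$-summand).

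One small caution: your remark that a square with a zero summand ``commutes trivially'' holds only when the top-left or bottom-right corner vanishes. When, say, $H_V(s,p)=0$ while the other corners are non-zero, commutativity is a genuine constraint. It comes only from your uniform component argument, and it is exactly what forces the vertical maps to vanish in Lemma~\ref{oct6-1}.
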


For $G$,
we set
\begin{align}\label{aug5-9}
    Z_s=\left\{
\begin{array}{cc}
     \Vd& \mbox{if}\  X^s\in \{B,D\}, \\
     \Ud& \mbox{if}\ X^s\in\{A,C\},
\end{array}\right.
&\quad
Z'_s=\left\{
\begin{array}{cc}
     \Vd& \mbox{if}\ X^s\in \{B,C\}, \\
     \Ud& \mbox{if}\ X^s\in \{A,D\},
\end{array}\right.
\end{align}
\begin{align}\label{oct8-1}
    a_s:=\begin{cases}
    0&\text{if}\ X^s\in\{A,C\},\\
    -1&\text{if}\ X^s\in\{B,D\}.
\end{cases}
\end{align}
Thus, $Z_s$ (resp. $Z'_s$) denotes the side from which the diagram starts (resp. the side on which it reaches $0$ first),
and $a_s$ is the integer such that the first term of the diagram of $(X^s)^H_{i(s)}$ is $\QQ^H(a_s)$.

For $1\leq s,t\leq \ell$, $p,q\in \ZZ$ and $Y\in \{\Vd,\Ud\}$,
    we denote by $\nu_{Y,s,t,p,q}\colon H_Y(s,p)\to H_{Y}(t,q)$
    the corresponding vertical arrow in the diagram (\ref{may30-2}).
    By repeatedly using the commutativity of (\ref{may30-2}) with Lemma~\ref{oct6-1} below,
we have the following lemma, which tells us the constraints on $M_{\Vd}$ and $M_\Ud$ for an object in $\calC$.

\begin{lem}\label{may30-5-1}
    In the situation of Lemma~\ref{may30-1},
    we have the following.
\[\nu_{Y,s,t,p,q}=\begin{cases}
    \nu_{Z_s,s,t,a_s,a_s+q-p}(p-a_s)&\text{if}\quad  a_s+q-p\in J_{Z_s}(t),\  p\in J_Y(s),\  q\in J_{Y}(t),\\
    0&\text{otherwise}.
\end{cases}\]
Moreover, for $q\in J_{Z_s}(t)$, if there exists $Y\in \{V,U\}$ and $k\geq -a_s$ such that $a_s+k\notin J_{Y}(s)$ and $q+k\in J_{Y}(t)$,
then for any $m\in \ZZ$ we have
\[\nu_{Z_s,s,t,a_s+m,q+m}=0.\]
\end{lem}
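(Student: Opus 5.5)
The plan is to use only the commutative squares (\ref{may30-2}) of Lemma~\ref{may30-1}. Each of them says that $\nu$ intertwines the maps $f$ and $g$, and along a fixed chain $s$ these maps are identities or zero. Order the terms of the diagram of $(X^s)^H_{i(s)}$ linearly. The chain starts at $H_{Z_s}(s,a_s)$, and successive terms are obtained by alternately applying $g$ and $f$ (or $f$ and $g$), with the Tate index increasing by one after each application of $f$. A pair $(Y,p)$ with $p\in J_Y(s)$ is then reached from the starting term by applying the composite $\Phi=\cdots fg$ or $\cdots gf$ of length determined by $p-a_s$ and $Y$. On chain $s$ this composite is the identity $H^s(a_s)\to H^s(p)$, after the identifications $H^s(p+1)(-1)=H^s(p)$.

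\textbf{Step 1: the first formula.} Compose the squares along this path; this is the role I expect Lemma~\ref{oct6-1} to play. We get
\[
\nu_{Y,s,t,p,q}\circ\Phi_s=\Phi_t\circ\nu_{Z_s,s,t,a_s,a_s+q-p}.
\]
Here $\Phi_t$ is the same composite applied on chain $t$, starting at index $a_s+q-p$.

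If $p\in J_Y(s)$, then $\Phi_s$ is the identity. So $\nu_{Y,s,t,p,q}$ is determined by $\Phi_t\circ\nu_{Z_s,s,t,a_s,a_s+q-p}$. If moreover $a_s+q-p\in J_{Z_s}(t)$ and $q\in J_Y(t)$, then $\Phi_t$ is the identity. I will need to check that all intermediate indices on chain $t$ lie in the relevant $J$'s. This should hold because the $J$'s are intervals and both endpoints are inside. We then obtain $\nu_{Y,s,t,p,q}=\nu_{Z_s,s,t,a_s,a_s+q-p}(p-a_s)$.

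If $p\notin J_Y(s)$ or $q\notin J_Y(t)$, the source or target is zero by definition of $H_Y$. If $a_s+q-p\notin J_{Z_s}(t)$, then $\nu_{Z_s,s,t,a_s,a_s+q-p}=0$ because its target vanishes, and hence the right-hand side above is zero. It then remains to see that $\Phi_t$ from the starting index is still injective enough to force $\nu_{Y,s,t,p,q}=0$. I would argue this using a square traversed in the other direction. Since $Z_s$ is the side where chain $s$ starts, the preceding step into $(Y,p)$ can be pulled back, and the square for that single step yields $\nu_{Y,s,t,p,q}=\nu_{\cdot}\circ(\text{id})$ with a vanishing factor.

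\textbf{Step 2: the vanishing statement.} Assume $q\in J_{Z_s}(t)$, and that $k\ge -a_s$ satisfies $a_s+k\notin J_Y(s)$ but $q+k\in J_Y(t)$. Apply the composite of length $k$ (landing on side $Y$) starting from $H_{Z_s}(s,a_s)$. On chain $s$ it is zero, because the chain has died before index $a_s+k$. On chain $t$, starting from $q$, it is the identity onto $H_Y(t,q+k)$, again using that the $J$'s are intervals. The composed square gives
\[
\mathrm{id}\circ\nu_{Z_s,s,t,a_s,q}=\nu_{Y,s,t,a_s+k,q+k}\circ 0=0 .
\]
Hence $\nu_{Z_s,s,t,a_s,q}=0$. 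By the first formula, every $\nu_{Z_s,s,t,a_s+m,q+m}$ is either a twist of this map or zero, so all of them vanish.

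\textbf{Main obstacle.} The hardest part is the bookkeeping in the ``otherwise'' case of Step 1. One has to show that the intervals $J_\Vd(s),J_\Ud(s)$ for the four types are arranged so that every relevant path stays inside the support, and to treat separately the boundary cases where chain $t$ starts or ends on a different side. I would handle this by a case check over $X^s,X^t\in\{A,B,C,D\}$, using the explicit intervals.
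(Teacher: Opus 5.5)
Your proposal is correct and follows the paper's route. The paper derives this lemma by repeatedly applying the squares (\ref{may30-2}) through the three cases of Lemma~\ref{oct6-1}, and your composed identity $\nu_{Y,s,t,p,q}\circ\Phi_s=\Phi_t\circ\nu_{Z_s,s,t,a_s,a_s+q-p}$ packages that repetition into one step; note that in the ``otherwise'' case you need no injectivity of $\Phi_t$: for $p\in J_Y(s)$ the map $\Phi_s$ is an isomorphism and the source of $\Phi_t$ is $0$, so this identity already gives $\nu_{Y,s,t,p,q}=0$.
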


\begin{comment}
    \begin{enumerate}
    \item \label{aug4-2} If $q-p\notin J_{Z_s}(t)$, then $\nu_{Y,s,t,p,q}=0$.
    \item \label{aug4-3} If $q-p\in J_{Z_s}(t)$ and $p\in J_{Y}(s)$ and $q\in J_{Y}(t)$,
    then $\nu_{Y,s,t,p,q}=\nu_{Z_s,s,t,0,q-p}(p)$.
    In particular, if $q<p$, since $q-p\notin J_{Z_s}(t)$, we have $\nu_{Y,s,t,p,q}=0$.
    \item \label{aug4-4} For $q\in J_{Z_s}(t)$, if there exists $k\in \ZZ_{\geq 0}$ such that $k\notin J_{Y}(s)$ and $q+k\in J_{Y}(t)$,
    then $\nu_{Z_s,s,t,0,q}=0$.
\end{enumerate}
\end{comment}

\begin{lem}\label{oct6-1}
        In the situation of Lemma~\ref{may30-1},
let 
\[\xymatrix{
I(s)\ar[d]_{m}\ar[r]&I'(s)\ar[d]^{m'}\\
I(t)\ar[r]&I'(t)
}\]
be either the left or right square of (\ref{may30-2}).
Then, we have the following.
\begin{enumerate}
    \item If $I(s)$, $I'(s)$, $I(t)$ and $I'(t)$ are not zero, then $m$ and $m'$ are identified.
    \item If $I(t)=0$ and $I(s)\neq 0$, then $m$ and $m'$ are zero.
    \item If $I'(s)=0$ and $I'(t)\neq 0$, then $m$ and $m'$ are zero.
\end{enumerate}
\end{lem}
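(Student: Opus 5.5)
The plan is to unwind the commutativity relations in Definition~\ref{may18-5}\ref{may19-2} component by component, using the description of $f$ and $g$ on the summands $H_Y(s,p)$ from Lemma~\ref{may30-1}. Fix one of the two squares of (\ref{may30-2}), say the left one, with top row $I(s)=H_{\Ud}(s,p)\xrightarrow{g}I'(s)=H_{\Vd}(s,p)$ and bottom row $I(t)=H_{\Ud}(t,q)\xrightarrow{g}I'(t)=H_{\Vd}(t,q)$. The right square, with $f$ and the Tate twist $(-1)$, is handled identically. The relation in Definition~\ref{may18-5}\ref{may19-2} is $g_{i}\circ m_{\Ud,ij}=m_{\Vd,ij}\circ g_j$. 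Restricting the domain to the summand $I(s)$ and projecting the target onto $I'(t)$ gives a component identity
\[
g|_{I(t)}\circ m = m'\circ g|_{I(s)}
\]
of maps $I(s)\to I'(t)$. The point is that the other summands do not contribute: $g$ is a direct sum over $s$ and over the twist index, so $g$ maps $H_{\Ud}(t',q')$ only into $H_{\Vd}(t',q')$.

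By Lemma~\ref{may30-1}, each horizontal map $g|_{I(s)}$ and $g|_{I(t)}$ is the identity $H^s(p)\to H^s(p)$ (resp.\ $H^t(q)\to H^t(q)$) when both its source and target are nonzero. If one end is zero, the map is zero, possibly because the source or target itself vanishes.

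\emph{Case (1).} All four spaces are nonzero. Both horizontal maps are identities, so the component identity reads $m=m'$ under the identifications $I(s)=I'(s)$ and $I(t)=I'(t)$.

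\emph{Case (2).} $I(t)=0$ and $I(s)\neq0$. Then $m\colon I(s)\to I(t)$ is trivially zero. For $m'$, note that in the four types the $\Ud$-part of the left square sits on top of the $\Vd$-part, so the source side of the horizontal arrow being zero forces the ``chain has stopped'' situation. I would argue with the explicit index sets $J_{\Vd}(t)$ and $J_{\Ud}(t)$. The key observation is that the relevant component of $M_{\Vd}$ or $M_{\Ud}$ must vanish because the identity is read in the other direction of the diagram. For the $f$-square with the roles exchanged, namely $I(t)=H_{\Vd}(t,q)$ and $I'(t)=H_{\Ud}(t,q+1)(-1)$, the same analysis applies.

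To prove $m'=0$ I would combine the $g$-relation with the $f$-relation of the adjacent square. If $I(s)\neq 0$ while $I'(s)=0$, the identity gives $g|_{I(t)}\circ m=0$, which yields $m=0$ only when $g|_{I(t)}$ is injective. So the real content of (2) and (3) is the following pair of deductions:
\begin{itemize}
\item In (2), $I(t)=0$ gives $m'\circ g|_{I(s)}=0$. Then $m'=0$ provided $g|_{I(s)}$ is surjective onto $I'(s)$, i.e.\ $I'(s)\neq0$ with the map equal to the identity; if $I'(s)=0$, then $m'=0$ trivially.
\item In (3), $I'(s)=0$ gives $m'=0$ trivially, and $g|_{I(t)}\circ m=0$ with $g|_{I(t)}=\mathrm{id}$ (as $I'(t)\neq0$ and $I(t)\neq0$) gives $m=0$. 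If $I(t)=0$, then $m=0$ trivially.
\end{itemize}

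The main obstacle is checking that whenever the hypothesis of (2) or (3) holds with the other space nonzero, the relevant horizontal map is indeed the identity rather than zero for index reasons. By Lemma~\ref{may30-1}, a horizontal map between two nonzero summands of the same $s$ is always the identity, so this is exactly what that lemma guarantees. I would therefore finish by verifying this for both squares: the $g$-square is immediate, and the $f$-square is immediate after identifying $H^s(p+1)(-1)$ with $H^s(p)$.
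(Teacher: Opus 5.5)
Your proof is correct and is the natural argument. The paper states this lemma without proof, as an immediate consequence of Lemma~\ref{may30-1}. Your bulleted chase proves all three items: the componentwise identity $g|_{I(t)}\circ m=m'\circ g|_{I(s)}$ for a single square (and its $f$-analogue), together with the fact that a horizontal map between two nonzero summands is the identity.

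The detour at the start of Case~(2) is superfluous and can be deleted. There you appeal to explicit index sets, to the $U$-part sitting on top of the $V$-part, and to combining with the $f$-relation of the adjacent square. Only the one square and the zero-source or zero-target observations are needed.
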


\begin{defilem}
Let $G$ be a split graded model.
We use the decomposition~(\ref{may15-6}).
    \begin{enumerate}
        \item For $1\leq s\leq \ell$ and $Y\in \{\Ud,\Vd\}$,
        we define an integer $\wtmax J_{Y}(s)\in \ZZ$ as
\[
\wtmax J_{Y}(s):=\left\{\begin{array}{cl}
             \max J_{Y}(s)& (J_{Y}(s)\neq \emptyset) \\
             -1& (J_{Y}(s)=\emptyset).
        \end{array}\right.
       \] 
Then, for any $s$ we have
\begin{equation}\label{may27-2}
\wtmax J_{\Ud}(s)-1\leq \wtmax J_{\Vd}(s)\leq \wtmax J_{\Ud}(s).
\end{equation}

\item \label{may27-1}
For $1\leq s,t\leq \ell$ and $q\in J_{Z_s}(t)$,
the following conditions are equivalent:
\begin{enumerate}[label=(\alph*), ref=\alph*]
    \item \label{may26-1}
$\wtmax{J_{\Vd}(s)}\geq \wtmax{J_{\Vd}(t)}-q+a_s $ and\ \   $\wtmax{J_{\Ud}(s)}\geq \wtmax{J_{\Ud}(t)}-q+a_s$.    
 \item \label{may26-3}
$\wtmax{J_{Z'_s}(s)}\geq \wtmax{J_{Z'_s}(t)}-q+a_s$.
    \item\label{may26-4}
For $k\geq -a_s$ and $Y\in \{\Vd,\Ud\}$, we have
\[a_s+k\notin J_{Y}(s) \Rightarrow q+k\notin J_{Y}(t).\]
    \item\label{may26-5}
For $k\geq -a_s$, we have
\[a_s+k\notin J_{Z'_s}(s) \Rightarrow q+k\notin J_{Z'_s}(t).\]
\end{enumerate}
We say that $(s,t,q)$ satisfies condition ($\delta$) if these equivalent conditions hold.   
    \end{enumerate}
\end{defilem}

\begin{proof}
By the definition,
we have
\begin{align*}
    \wtmax{J_U(s)}&=i(s)-1\\
    \wtmax{J_V(s)}&=\begin{cases}
        i(s)-1&\text{if}\ Z_s'=U,\ \text{i.e.}\ 
        X^s\in\{A,D\}\\
        i(s)-2&\text{if}\ Z_s'=V,\ \text{i.e.}\ X^s\in\{B,C\}.
    \end{cases}
\end{align*}
Hence, 
if  $X^s\in\{A,D\}$,
we have  $\wtmax{J_U(s)}=\wtmax{J_V(s)}$,
and if $X^s\in\{B,C\}$,
we have 
$\wtmax{J_U(s)}=\wtmax{J_V(s)}+1$.
This proves the inequality (\ref{may27-2}).

For the proof of \ref{may27-1}, 
assume $q\in J_{Z_s}(t)$.
By the definitions of $a_s$ and $Z_s$, if $k\geq -a_s$, then $a_s+k\geq 0$ and $q+k\geq 0$.
Note that $J_Y(u)\cap \ZZ_{\geq 0}=[0,\wtmax{J_Y}(u)]\cap \ZZ$.
Therefore, 
the condition 
$\wtmax{J_Y(s)}\geq \wtmax{J_Y(t)}-q+a_s$ is equivalent to
the condition 
$a_s+k\notin J_Y(s)\Rightarrow q+k\notin J_Y(t)$ for $k\geq -a_s$.
Hence, we obtain (\ref{may26-1})$\Leftrightarrow$ (\ref{may26-4}) and
(\ref{may26-3})$\Leftrightarrow$ (\ref{may26-5}).

It remains to show the equivalence between (\ref{may26-1}) and (\ref{may26-3}).
This follows immediately from the formulas for
$\wtmax{J_V(s)}$ and $\wtmax{J_U(s)}$ above
together with the inequality (\ref{may27-2}).

\end{proof}

\begin{defi}\label{may22-1}
     We define $\frC_3$ to be the set of pairs $(G,\{\mu_{s,t,q}\}_{(s,t,q)\in \Delta})$,
   where 
   $G$ is a split graded model and
    \[\mu_{s,t,q}\in \Hom_{\CC}(H^s(a_s),H^{t}(q)),\]
    for every $(s,t,q)\in \Delta$,
where the index set $\Delta$ is defined as
\[\Delta=\left\{(s,t,q)\in ([1,\ell]\cap\ZZ)^2\times \ZZ\ \middle|\ 
\begin{array}{l}
      q\in J_{Z_s}(t),\\
      \text{$(s,t,q)$ satisfies condition ($\delta$)},\\
      w_s-2a_s>w_t-2q
\end{array} 
\right\}.\]
\end{defi}

\begin{rem}
$H^s(a_s)$ is the first term of the diagram of $(X^s)^H_{i(s)}\otimes H^s$.
The condition $q\in J_{Z_s}(t)$ means that 
$H^t(q)$ is a direct summand of the $Z_s$-part with respect to the decomposition (\ref{may15-6}).
    The condition $w_s-2a_s>w_t-2q$ in Definition~\ref{may22-1}
    means that the weight of $H^s(a_s)$ is greater than that of $H^t(q)$.
Under these conditions,
condition ($\delta$)
is imposed in order to make the maps $\mu_{s,t,q}(k)$, which appear below as components of $M_{\Vd}$ and $M_{\Ud}$, compatible with $f$ and $g$.
\end{rem}

\begin{lem}\label{may27-8}
    For $(G,\mu)=(G,\{\mu_{s,t,q}\}_{(s,t,q)\in \Delta})\in \frC_3$,
there exists a unique pair $(M_{\Vd},M_\Ud)$ with $M_{\Vd}=(m_{\Vd,ij})_{i,j\in \Lambda}\in \UpT_\Vd$ and $M_\Ud=(m_{\Ud,ij})_{i,j\in \Lambda}\in \UpT_\Ud$ satisfying the following conditions:
\begin{enumerate}
    \item $(G,M_{\Vd},M_\Ud)$ is an object in $\calC$.
\item With respect to the decomposition~(\ref{may15-6}),
for $(s,t,q)\in \Delta$,
the component $H^s(a_s)\to H^t(q)$ of $m_{Z_s,(w_t-2q)(w_s-2a_s)}$ is $\mu_{s,t,q}$.
\end{enumerate}
    
\end{lem}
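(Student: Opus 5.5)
We construct the pair $(M_{\Vd},M_\Ud)$ block by block in the decomposition (\ref{may15-6}) and show that the compatibility conditions of Definition~\ref{may18-5} leave no freedom beyond the family $\mu$. With respect to the summands $H_Y(s,p)$, a pair $(M_{\Vd},M_\Ud)\in\UpT_\Vd\times\UpT_\Ud$ amounts to a family of components $\nu_{Y,s,t,p,q}\colon H_Y(s,p)\to H_Y(t,q)$. Here we take $\nu$ to be the identity when $(s,p)=(t,q)$, and $\nu=0$ whenever the weights satisfy $w_s-2p<w_t-2q$, or are equal with $(s,p)\neq(t,q)$.

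For \emph{uniqueness}, suppose $(G,M_{\Vd},M_\Ud)\in\calC$ has the prescribed components. By Lemma~\ref{may30-5-1}, every off-diagonal component $\nu_{Y,s,t,p,q}$ is either zero or the Tate twist $(p-a_s)$ of a ``starting'' component $\nu_{Z_s,s,t,a_s,q'}$ with $q'=a_s+q-p\in J_{Z_s}(t)$. Upper triangularity kills those starting components with $w_s-2a_s\le w_t-2q'$; the identity case does not arise off the diagonal. The second half of Lemma~\ref{may30-5-1}, together with the equivalence of (\ref{may26-1})–(\ref{may26-5}), kills those $(s,t,q')$ failing condition ($\delta$). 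The remaining starting components are exactly those indexed by $\Delta$, and there they are prescribed to be $\mu_{s,t,q'}$. Hence $M_{\Vd}$ and $M_\Ud$ are determined.

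For \emph{existence}, we define $M_{\Vd},M_\Ud$ by the formula forced above. We set
\[
\nu_{Y,s,t,p,q}:=\mu_{s,t,a_s+q-p}(p-a_s)
\]
when $(s,t,a_s+q-p)\in\Delta$, $p\in J_Y(s)$ and $q\in J_Y(t)$, we put the identity on diagonal blocks, and we set all other components to zero. The weight condition in $\Delta$ shows that $w_s-2p>w_t-2q$, since the weight difference is invariant under the shift $(p,q)\mapsto(p+m,q+m)$, so the matrices are upper triangular with identity diagonal. It remains to check condition~\ref{may19-2} of Definition~\ref{may18-5}, namely the commutativity of every square in (\ref{may30-2}). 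Because $f$ and $g$ are identity or zero maps between summands (Lemma~\ref{may30-1}), each square reduces to comparing two components of the form $\mu_{s,t,q}(k)$, or one such component with zero. We split into the cases of Lemma~\ref{oct6-1}.

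\emph{Case 1:} all four corners are nonzero. Then both vertical maps are the same shifted $\mu$, or both vanish, since membership in $\Delta$ depends only on $q-p$.

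\emph{Case 2:} a source corner is nonzero and the corresponding target corner vanishes. Then one must show that the relevant vertical map vanishes. This is exactly where condition ($\delta$) in form (\ref{may26-4}) is used: if $a_s+k\notin J_Y(s)$ then $q+k\notin J_Y(t)$, so the chain starting at $H^t(q)$ dies no later than the one starting at $H^s(a_s)$.

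\emph{Case 3:} the remaining degenerate configurations, including the boundary terms where $p<0$ for types $B$ and $D$. These are handled in the same way, using that each $J_Y(u)\cap\ZZ_{\ge0}$ is an interval starting at $0$.

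The main obstacle is this case analysis in the existence part: verifying that whenever a square has a zero corner, the prescribed components are forced to vanish, uniformly over the sixteen type combinations $(X^s,X^t)$. To control it, I would rephrase everything in terms of $\wtmax J_Y$ and the inequality (\ref{may27-2}). Using the equivalent form (\ref{may26-1}) of ($\delta$), each square then reduces to a single inequality among $\wtmax J_{\Vd}$ and $\wtmax J_{\Ud}$, rather than a separate type-by-type check.
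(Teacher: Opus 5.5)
Your proposal follows the paper's proof. Uniqueness comes from Lemma~\ref{may30-5-1} together with upper triangularity, and existence uses the same explicit components $\mu_{s,t,a_s+q-p}(p-a_s)$, checked square by square in (\ref{may30-2}), with condition ($\delta$) excluding the situation where the chain of $s$ ends while that of $t$ continues.

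When you write out the case analysis, attach ($\delta$) to the configuration $I'(s)=0\neq I'(t)$ of Lemma~\ref{oct6-1}. Your Case~2 as phrased ("source corner nonzero, corresponding target corner vanishes") reads like the other configuration, $I(t)=0\neq I(s)$, where the chain of $t$ starts later. That configuration is not handled by ($\delta$): there the relevant component vanishes because the starting index $a_s+q-p$ lies outside $J_{Z_s}(t)$. The paper also treats this check as trivial.
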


\begin{proof}
We use the notation defined in Lemma~\ref{may30-1} and Lemma~\ref{may30-5-1}.
For $Y\in\{\Vd,\Ud\}$ and $i,j\in \Lambda$, we set $m_{Y,ii}=\iden$ and 
$m_{Y,ij}=0$ if $i>j$.
It remains to define $m_{Y,ij}$ for $i<j$.
For $1\leq s,t\leq \ell$, $q\in \ZZ$, $k\in \ZZ_{\geq 0}$ and $a_s+k\in J_{Y}(s)$ such that $q+k\in J_{Y}(t)$
and $w_t-2(q+k)<w_s-2(a_s+k)$,
we define the component $H^s(a_s+k)\to H^t(q+k)$ of $m_{Y,(w_t-2(q+k))(w_s-2(a_s+k))}\colon L_{Y,w_s-2(a_s+k)}\to L_{Y,w_t-2(q+k)}$ with respect to
the decomposition~(\ref{may15-6})
to be 
\begin{align}\label{may28-1}
    \begin{cases}
    \mu_{s,t,q}(k)&\text{if}\  (s,t,q)\in \Delta,\\
    0&\text{otherwise},
\end{cases}
\end{align}
where $\mu_{s,t,q}(k)$ is the $k$-th Tate twist of
$\mu_{s,t,q}\colon H^s(a_s)\longrightarrow H^t(q)$.
We set all the remaining components to zero. 
We thus obtain $M_{\Vd}\in \UpT_\Vd$ and $M_\Ud\in \UpT_\Ud$.

Using the description of $f$ in Lemma~\ref{may30-1},
the equality $f\circ M_{\Vd}=M_\Ud(-1)\circ f$ holds if and only if
for $1\leq s,t\leq \ell$ and $p,q\in \ZZ$,
the following diagram commutes:
\begin{equation}
    \label{may25-2}
\begin{aligned}
    \xymatrix{
H_{\Vd}(s,p)\ar[r]^-{f}\ar[d]_{M_{\Vd}}&H_{\Ud}(s,p+1)(-1)\ar[d]^{M_\Ud(-1)}\\
H_\Vd(t,q)\ar[r]^-{f}&H_{\Ud}(t,q+1)(-1).
}
\end{aligned}
\end{equation}
Let us show the commutativity of (\ref{may25-2}).
In the case where $p> q$ or $w_s-2p< w_t-2q$,
both the vertical arrows are zero by construction.
If $w_s-2p=w_t-2q$, the commutativity follows from the fact that $m_{Y,ii}=\iden$.
Moreover, the case where $p\not\in J_{\Vd}(s)$, $q\not\in J_{\Vd}(t)$, or $q+1\not\in J_{\Ud}(t)$ is also trivial.
So we may assume that
$p\leq  q$, $w_s-2p>w_t-2q$, $p\in J_{\Vd}(s)$, $q\in J_{\Vd}(t)$, and $q+1\in J_{\Ud}(t)$.
Set $r=a_s+q-p$.
Then, the bottom arrow of (\ref{may25-2}) is the identity.
If $p+1\in J_{\Ud}(s)$ and $(s,t,r)\in \Delta$,
since the top arrow of (\ref{may25-2}) is the identity
and the vertical arrows are $\mu_{s,t,r}(p-a_s)$ and $\mu_{s,t,r}(p-a_s+1)(-1)$, respectively, 
the diagram commutes.
If $p+1\in J_{\Ud}(s)$ and $(s,t,r)\notin \Delta$,
both vertical arrows are zero by construction.
Suppose $p+1\not\in J_{\Ud}(s)$.
If $r\not\in J_{Z_s}(t)$, both vertical arrows are zero by construction,
so we may assume $r\in J_{Z_s}(t)$.
Combining the assumption $q+1\in J_{\Ud}(t)$,
we have $\wtmax J_{\Ud}(s)\leq p$ and $\wtmax J_{\Ud}(t)-r+a_s\geq q+1-r+a_s=p+1>p$,
which means that condition ($\delta$) does not hold.
Hence, $(s,t,r)$ is not in $\Delta$ and the vertical arrows are also zero in this case.
Similarly, we obtain $g\circ M_\Ud=M_{\Vd}\circ g$.

Conversely,
suppose that we have such a pair $(M_{\Vd},M_\Ud)$.
For $1\leq s,t\leq \ell$, $p,q\in \ZZ$, and $Y\in \{\Vd,\Ud\}$,
let
$\nu_{Y,s,t,p,q}\colon H_Y(s,p)\to H_Y(t,q)$
be the component of the morphism
$L_{Y,w_s-2p}\to L_{Y,w_t-2q}$
induced by $m_{Y,(w_t-2q)(w_s-2p)}$.
If $p\notin J_Y(s)$ or $q\notin J_{Y}(t)$,
this component is zero.
Hence, we assume that $p\in J_Y(s)$ and $q\in J_Y(t)$, and set $r=a_s+q-p$.

If $r\notin J_{Z_s}(t)$, then $\nu_{Y,s,t,p,q}=0$ by Lemma~\ref{may30-5-1}.
If $r\in J_{Z_s}(t)$, then Lemma~\ref{may30-5-1} gives
$\nu_{Y,s,t,p,q}=\nu_{Z_s,s,t,a_s,r}(p-a_s)$.
Thus, it remains to determine $\nu_{Z_s,s,t,a_s,r}$.

If $(s,t,r)$ does not satisfy condition ($\delta$),
then $\nu_{Z_s,s,t,a_s,r}=0$ by Lemma~\ref{may30-5-1}.
Suppose that condition ($\delta$) holds for $(s,t,r)$.
If $w_s-2a_s>w_t-2r$, then $(s,t,r)\in \Delta$, and the second condition in the statement of this Lemma gives $\nu_{Z_s,s,t,a_s,r}=\mu_{s,t,r}$.
If $w_s-2a_s<w_t-2r$, then this component is zero since $M_{\Vd}$ and $M_{\Ud}$ are upper triangular.
Finally, if $w_s-2a_s=w_t-2r$, the component is uniquely determined by $m_{Z_s,(w_s-2a_s)(w_s-2a_s)}=\iden$.
Therefore, all components $\nu_{Y,s,t,p,q}$ are uniquely determined by $\{\mu_{s,t,q}\}_{(s,t,q)\in \Delta}$.

This completes the proof.
\begin{comment}
By the assumption on $M_{\Vd}$ and $M_\Ud$,
for $(s,t,q)\in \Delta$, we have 
\[\nu_{Z_s,s,t,0,q}=\mu_{s,t,q}.\]
By (\ref{aug4-2}) of Lemma~\ref{may30-5-1},
we already know $\nu_{Z,s,t,p,q}=0$ if $q-p\notin J_{Z_s}(t)$,
Moreover, (\ref{aug4-2}) of Lemma~\ref{may30-5-1},
if $q-p\in J_{Z_s}(t)$, $p\in J_{Z}(s)$, $q\in J_Z(t)$ and $(s,t,q-p)\in \Delta$,
then we obtain $\nu_{Z,s,t,p,q}=\nu_{Z_s,s,t,0,q-p}(p)=\mu_{s,t,q-p}(p)$.
So, it remains to consider 
$\nu_{Z,s,t,p,q}$
for $p\in J_{Z}(s)$, $q\in J_Z(t)$
satisfying $q-p\in J_{Z_s}(t)$ and $(s,t,q-p)\notin \Delta$.
Suppose $(s,t,q-p)$ does not satisfy condition $(\delta)$.
Then, by (\ref{aug4-3}) of Lemma~\ref{may30-5-1},
$\nu_{Z_s,s,t,0,q-p}=0$, which implies $\nu_{Z,s,t,p,q}=0$.
Suppose that $w_s<w_t-2(q-p)$.
Since $\nu_{Z,s,t,p,q}\colon H^s(p)\to H^t(q)$ is induced from 
$L_{Z,w_s-2p}\to L_{Z,w_t-2q}$.
However, since $M_{\Vd}$ and $M_\Ud$ are upper triangular,
the inequality $w_s-2p<w_t-2q$ implies the map is zero,
which leads to $\nu_{Z,s,t,p,q}=0$.
Therefore,
we conclude that all maps $\nu_{Z,s,t,p,q}$ are uniquely determined 
from the maps $\{\mu_{s,t,q}\}_{(s,t,q)\in \Delta}$.

This completes the proof.
\end{comment}

\end{proof}

\begin{thm}\label{aug5-5}
There is a natural surjective map
\[\frC_3\twoheadrightarrow \frC_2,\]
which sends $(G,\{\mu_{s,t,q}\}_{(s,t,q)\in \Delta})$
to $[[M_{\Vd},M_\Ud]_G]$,
where $M_{\Vd}$ and $M_\Ud$ are constructed as in Lemma~\ref{may27-8}.
Consequently, via the bijection
$\frC_2\simto  \calC/\simeq$ in Proposition~\ref{may27-9}
and the equivalence of categories in Proposition~\ref{aug4-1-prop},
we obtain a surjective map
\[\frC_3 \twoheadrightarrow \MHM_{\mathrm{mon},\unip}(\CC_t)/\simeq.\]
In particular,
every object of $\MHM_{\mathrm{mon},\unip}(\CC_t)$ is
represented by a split graded model 
together with a family of $\CC$-linear maps $\{\mu_{s,t,q}\}$ indexed by $\Delta$.
\end{thm}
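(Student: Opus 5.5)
The map is well defined by Lemma~\ref{may27-8}. For each $(G,\mu)\in\frC_3$ that lemma gives a unique pair $(M_\Vd,M_\Ud)$ with $(G,M_\Vd,M_\Ud)\in\calC$. The assignment $(G,\mu)\mapsto[[M_\Vd,M_\Ud]_G]$ therefore makes sense. The whole content is surjectivity: we must show that every class in $\frC_2$ has a representative $(G,M_\Vd,M_\Ud)$ in which $M_\Vd,M_\Ud$ are exactly the matrices produced by Lemma~\ref{may27-8} from their own components $\mu_{s,t,q}$, $(s,t,q)\in\Delta$.

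Take an arbitrary class $[[M_\Vd,M_\Ud]_G]$. By the remark after Proposition~\ref{aug4-1-prop}, we may replace $(M_\Vd,M_\Ud)$, within its class in $\frC_1(G)$, by a pair satisfying condition $(*)$. This is done by applying Lemma~\ref{may18-1} to $D(G,M_\Vd,M_\Ud)$. The identifications chosen there differ from the given ones only by an automorphism of $G$ preserving the weight decomposition into pure pieces, which is allowed by the equivalence in \ref{may21-9} of Definition~\ref{may21-3}. So we may assume $(*)$ holds, possibly after changing $G$ to an isomorphic split graded model.

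Now define $\mu_{s,t,q}$, for $(s,t,q)\in\Delta$, to be the component $H^s(a_s)\to H^t(q)$ of $m_{Z_s,(w_t-2q)(w_s-2a_s)}$. Let $(M'_\Vd,M'_\Ud)$ be the pair constructed from this family in Lemma~\ref{may27-8}. We claim $(M'_\Vd,M'_\Ud)=(M_\Vd,M_\Ud)$, which gives surjectivity. By the uniqueness part of Lemma~\ref{may27-8}, it suffices to know that $(G,M_\Vd,M_\Ud)$ is an object of $\calC$ and has the prescribed components; both hold by construction. The uniqueness argument itself rests on Lemma~\ref{may30-5-1}: every component $\nu_{Y,s,t,p,q}$ is either a Tate twist of $\nu_{Z_s,s,t,a_s,a_s+q-p}$ or zero, and it vanishes when $(\delta)$ fails. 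The remaining components are fixed by upper triangularity together with $m_{ii}=\iden$.

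The main obstacle is the equal-weight case $w_s-2a_s=w_t-2r$ with $s\neq t$. There the component lies in a diagonal block $m_{Z_s,ii}=\iden$, so it is $\iden$ if $s=t$ and $r=a_s$, and zero otherwise. One must check this is consistent with Lemma~\ref{may30-5-1}, which propagates the component along $f,g$ to $\nu_{Y,s,t,p,q}$ with $p-a_s=q-r$. The propagated blocks are again diagonal blocks, since the weights shift equally, so the propagated components are again the identity or zero and agree with the construction of Lemma~\ref{may27-8}. With this checked, the uniqueness in Lemma~\ref{may27-8} applies verbatim, $M'=M$, and the class is hit.

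The final assertion follows by composing with the bijection $\frC_2\simto\calC/\simeq$ of Proposition~\ref{may27-9} and the equivalence of Proposition~\ref{aug4-1-prop}.
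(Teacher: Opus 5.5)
Your proposal is correct and is essentially the paper's proof. You take any representative, read off $\mu_{s,t,q}$ as the components $H^s(a_s)\to H^t(q)$ of $m_{Z_s,(w_t-2q)(w_s-2a_s)}$, and conclude by the uniqueness in Lemma~\ref{may27-8}. Your preliminary reduction to condition $(*)$ and your separate equal-weight check are harmless but unnecessary: objects of $\calC$ are not required to satisfy $(*)$, and the diagonal blocks $m_{Y,ii}=\iden$ are already handled in the proof of Lemma~\ref{may27-8}.
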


\begin{proof}
Lemma~\ref{may27-8} defines a map $\frC_3\to  \frC_2$.
For fixed $[[M_{\Vd},M_\Ud]_G]\in \calC_2$,
choose a representative 
$(M_V,M_U)$,
where $M_V=(m_{\Vd,ij})$ and $M_U=(m_{\Ud,ij})$.
For each $(s,t,q)\in \Delta$,
    we define $\mu_{s,t,q}$
    as the component $H^s(a_s)\to H^t(q)$ of $m_{Z_s,(w_t-2q)(w_s-2a_s)}$.
    By the uniqueness in Lemma~\ref{may27-8},
    the pair associated with $(G,\{\mu_{s,t,q}\}_{(s,t,q)\in \Delta})\in \frC_3$
    is precisely $(M_{\Vd},M_\Ud)$.
    Hence the map is surjective.
\end{proof}

\subsection{The case without eigenvalue $1$}\label{ssclasmhmnonunip}

Let us finally consider monodromic mixed Hodge modules $\calM\in \MHM_{\mon,\neq 1}(\CC_t)$.
Throughout this subsection, we write $T$ for the semisimple part of the monodromy.
As explained in Subsection~\ref{aug5-1},
classifying such mixed Hodge modules is equivalent to classifying diagrams of the form
\begin{align}\label{jun15-5}
    \xymatrix@C=40pt{(\Ud,T)\ar@<0.5ex>[r]^-{-N,\ (-1)}&(\Ud,T)\ar@<0.5ex>[l]^-{=}},
\end{align}
where $(\Ud,T)$ is a mixed Hodge structure with a finite-order automorphism $T$ having no eigenvalue $1$, and
$N$ is a nilpotent morphism $N\colon \Ud\to \Ud(-1)$ which commutes with $T$.
Then,
$T$ and $N$ induce maps
$\GR^W_\bullet T$ and $\GR^W_\bullet N$ on $\GR^W_\bullet \Ud$, respectively.
Similarly to (\ref{mar4-2}),
we define a diagram
\begin{align}\label{jun15-15}
    \xymatrix@C=40pt{(\GR^{W}_\bullet\Ud,\GR^W_\bullet  T)\ar@<0.5ex>[r]^-{-\GR^W_\bullet N,\ (-1)}&(\GR^W_\bullet \Ud,\GR^W_\bullet T)\ar@<0.5ex>[l]^-{=}},
\end{align}
and denote the corresponding mixed Hodge module by $G(M)$.
As we already mentioned in Subsection~\ref{aug5-2},
we have $\Vd_i=\Vd'_i=\Ud_i=0$,
and $T$ induces an automorphism on $\Ud'_i$,
which we will denote again by $T$.

Since $T$ is a finite-order automorphism, 
Lemma~\ref{jun13-8} implies that
the proof of Theorem~\ref{may15-5} extends to the $T$-equivariant setting.
To state the corresponding form of Theorem~\ref{may15-5} in the present situation, we introduce the following notation.

Let $H$ be a pure Hodge structure equipped with a finite-order automorphism $T$.
For $i\geq 1$, we denote by $\calA_i(H,T)$ the zero extension to $\CC$ of the mixed Hodge module on $\CS$ corresponding to the tensor product of $A_i^H|_{\CS}$ and the variation of Hodge structure on $\CS$ with fiber $H$ and monodromy $T$,
where $A_i^H$ is defined in Definition~\ref{ap8-0}.
This corresponds to
the diagram:
\begin{align}\label{jun15-10}
    \xymatrix@C=40pt{\bigoplus_{j=0}^{i-1}(H(j),T(j))\ar@<0.5ex>[r]^-{-N,\ (-1)}&\bigoplus_{j=0}^{i-1}(H(j),T(j))\ar@<0.5ex>[l]^-{=}},
\end{align}
where $-N$ is given by the identity map $H(j)\to H(j+1)(-1)$ for $0\leq j\leq i-2$, and by zero on $H(i-1)$.

\begin{cor}
    The mixed Hodge module $G(M)$ can be decomposed as
    \begin{align}\label{jun16-1}
        G(M)\simeq \bigoplus_{s=1}^{\ell}\calA_{i(s)}(H^s,T),
    \end{align}
where 
$i(s)\geq 1$ and $(H^s,T)$ is a pure Hodge structure with a finite-order automorphism $T$ without eigenvalue $1$.
\end{cor}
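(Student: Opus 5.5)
The plan is to run the proof of Theorem~\ref{may15-5} again, now for the diagram (\ref{jun15-15}), keeping track of the automorphism $\GR^W_\bullet T$ at every step. Replacing $\Ud$ by $\GR^W_\bullet \Ud$, we may assume that $\Ud=\GR^W_\bullet\Ud$, so that (\ref{mar4-5}) holds for all the subquotients in question. As noted in Subsection~\ref{aug5-2}, the diagram has $\Vd=\Ud$, $g=\iden$ and $f=-N$. Hence $\Vd_i=\Vd_i'=\Ud_i=0$, and only the subquotients $\Ud_i'$ can be non-zero. Since $T$ commutes with $N$, every kernel $\Ker N^i$, every image, and every subquotient in Definition~\ref{mar4-3} is a $T$-stable mixed Hodge substructure or subquotient. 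Moreover, on each of them $T$ acts as a finite-order automorphism of mixed Hodge structures.

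The key step is to choose all the splittings $T$-equivariantly and as morphisms of Hodge structures. The relevant splittings are those in Lemma~\ref{jan7-2}, Lemma~\ref{mar4-8}, Corollary~\ref{ap3-3-c}, Lemma~\ref{ap8-3} and Corollary~\ref{ap8-4}. Each such splitting is a section of a surjection between pure polarizable Hodge structures, once we use (\ref{mar4-5}) and work weight by weight. Such a section exists because exact sequences of polarizable pure Hodge structures split. The category of polarizable Hodge structures of a fixed weight is $\QQ$-linear and $T^m=\iden$, so the second case of Lemma~\ref{jun13-8} applies: averaging the section over powers of $T$ gives an equivariant section. In the inductive modification in the proof of Lemma~\ref{mar4-8}, a section is replaced by $(gs,t)$. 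Here $g=\iden$ and $s,t$ are equivariant, so the modified section is again equivariant, and the induction goes through.

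This yields a $T$-equivariant decomposition of the diagram of mixed Hodge structures into summands
\[
\bigoplus_{r=0}^{i-1}N^r(\Ud_i')\rightleftarrows \bigoplus_{r=0}^{i-1}N^r(\Ud_i'),
\]
with $T$ acting on each summand. We take $H:=\Ud_i'$, which is pure by (\ref{mar4-5}) and carries the induced automorphism $T$. The maps $N^r\colon \Ud_i'\to N^r(\Ud_i')(r)$ are then $T$-equivariant isomorphisms of Hodge structures. This identifies the summand with the diagram (\ref{jun15-10}) for $(H,T)$, exactly as (\ref{ap8-7}) was identified with (\ref{ap8-8}). The automorphism $T$ on $H$ has no eigenvalue $1$, because it is a subquotient of $\Ud$, on which $T$ has no eigenvalue $1$.

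Finally, we check that (\ref{jun15-10}) corresponds to $\calA_i(H,T)$ under Proposition~\ref{june11-5}. The nearby cycle of the zero extension of $A_i^H|_{\CS}\otimes(H,T)$ is $K_i\otimes H$, with semisimple monodromy $T$ and with $N$ equal to $N_i\otimes\iden$. Its unipotent nearby and vanishing cycles vanish. The resulting object of $\calG$ is exactly (\ref{jun15-10}). Summing over $i$ and over bases of the multiplicity spaces, as in Corollary~\ref{ap7-2}, gives (\ref{jun16-1}). The main obstacle is the equivariance of the inductively modified sections in Lemma~\ref{mar4-8}. This should be unproblematic because $g=\iden$, and because we can average at each step before modifying.
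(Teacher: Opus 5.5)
Your proposal is correct and is essentially the paper's argument. You rerun the proof of Theorem~\ref{may15-5} for (\ref{jun15-15}), choose every splitting $T$-equivariantly by the averaging of Lemma~\ref{jun13-8}, and identify each $\Ud_i'$-summand with (\ref{jun15-10}).

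There are two small slips. First, (\ref{mar4-5}) only says that $\Ud_i'$ is a direct sum of pure pieces, so you should split it into its $T$-stable weight components. Second, the final sum should run over $i$ and these weights, not over bases as in Corollary~\ref{ap7-2}, because one-dimensional pieces are in general neither sub-Hodge structures nor $T$-stable.
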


We call the object appearing on the right-hand side of (\ref{jun16-1}), equipped with the direct sum decomposition,
or the corresponding diagram with the induced decomposition, a split graded model.
For such a model, the mixed Hodge structures 
$L_{\Ud,w}$ are defined in the same way as in (\ref{may15-6}), and each $L_{\Ud,w}$ is equipped with a finite-order automorphism $T$.
We denote by $\Exten{\{L_{\Ud,w}\}_{w\in \Lambda}}^{\equiva}$
the set of isomorphism classes of 
mixed Hodge structures $H$ equipped with a finite-order automorphism $T$,
together with isomorphisms 
\[(\GR^W_wH, \GR^W_w T)\simeq (L_{\Ud,w},T)\qquad (w\in \Lambda).\]
We define the $T$-equivariant subgroup $\UpT^{T}$ by
\[\UpT^{T}:=\{(m_{ij})_{i,j\in \Lambda}\in \UpT\ |\ Tm_{ij}=m_{ij}T\}.\]
We define $\UpT^{T}_{\QQ}$ and $\UpT^{T}_{F}$ in the same way. 
The following is the $T$-equivariant version of Proposition~\ref{jun16-2}.
\begin{lem}
There is a natural bijection
\[\UpT_{\QQ}^{T}\backslash \UpT^{T}/\UpT^{T}_F\simto {\Exten{\{L_{\Ud,w}\}_{w\in \Lambda}}}^{\equiva}.\]    
\end{lem}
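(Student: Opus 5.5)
We follow the proof of Proposition~\ref{jun16-2}, carrying the automorphism $T$ along at each stage. The map $M\mapsto H_M$ is defined as before. For $M\in \UpT^{T}$ we let $T$ act on $H_M$ by $\bigoplus_w T|_{L_{\Ud,w}}$. This preserves the $\QQ$-structure and the weight filtration, because each $T|_{L_{\Ud,w}}$ is an automorphism of pure Hodge structures. It also preserves $F^pH_{M,\CC}=M(\bigoplus_w F^pL_{\Ud,w,\CC})$, since $T$ commutes with $M$ and preserves each $F^pL_{\Ud,w,\CC}$. So $(H_M,T)$ defines an element of $\Exten{\{L_{\Ud,w}\}_{w\in \Lambda}}^{\equiva}$.

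\emph{Well-definedness.} Let $M'=AMB$ with $A\in \UpT^{T}_\QQ$ and $B\in \UpT^{T}_F$. Then $A$ maps $F^p H_{M}$ to $F^pH_{M'}$, because $B$ preserves $\bigoplus_w F^pL_{\Ud,w,\CC}$. Moreover $A$ preserves the $\QQ$-structure, induces the identity on the graded pieces, and commutes with $T$. Hence $A$ is an isomorphism of the equivariant objects.

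\emph{Surjectivity.} Let $(H,T)$ be given. By Lemma~\ref{may8-1-1} we can choose a filtered splitting $\eta\colon H_\QQ\simto \GR^W_\bullet H_\QQ=\bigoplus_w L_{\Ud,w,\QQ}$ inducing the identity on graded pieces. We then make $\eta$ equivariant by averaging, as in Lemma~\ref{jun13-8}: replace $\eta$ by $\frac1m\sum_{k=0}^{m-1}(\GR^W_\bullet T)^{-k}\,\eta\, T^{k}$. Each summand is filtered and induces the identity on $\GR^W$, hence so does the average. The transported Hodge filtration $\eta(F^\bullet H_\CC)$ is then $T$-stable.

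We next need $M\in\UpT^{T}$ with $\eta(F^pH_\CC)=M(\bigoplus_w F^pL_{\Ud,w,\CC})$. Proposition~\ref{jun16-2} gives some $M\in \UpT$ with this property. To make it $T$-equivariant, we build $M$ inductively on $|\Lambda|$, following Proposition~\ref{aug19-1}. At each step the relevant extension datum $\Phi\colon H'_\CC\to H_\CC$ is chosen so that its graph describes a $T$-stable filtration. The condition on $\Phi$ is affine in $\Phi$, with the space of admissible choices stable under the averaging $\Phi\mapsto \frac1m\sum_k T^{-k}\Phi T^k$, so we may replace $\Phi$ by its average. Equivalently, in one step: the set $S$ of all $M\in\UpT$ realizing the given filtration is a right $\UpT_F$-coset and is stable under $M\mapsto TMT^{-1}$.

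\emph{Injectivity.} Suppose $(H_M,T)\simeq (H_{M'},T)$ via an equivariant isomorphism $A$ inducing the identity on graded pieces. Then $A\in \UpT_\QQ$ and $A$ commutes with $T$, so $A\in \UpT^{T}_\QQ$. Put $B:=M^{-1}A^{-1}M'$, exactly as in the proof of Proposition~\ref{may27-9}. Since $A$ is a morphism of mixed Hodge structures, $B\in\UpT_F$. Since $M$, $M'$ and $A$ all commute with $T$, so does $B$, hence $B\in\UpT^{T}_F$.

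The main obstacle is producing the equivariant representative in the surjectivity step, because $\UpT$ is nonabelian and averaging a product does not give the average of the factors. We plan to get around this by averaging in the column-by-column inductive construction. At the $s$-th superdiagonal stage the correction terms enter additively, as in the proof of Lemma~\ref{may7-1}, so the Reynolds operator $\frac1m\sum_k T^{k}(\cdot)T^{-k}$, which requires $\QQ$-linearity and $T^m=\iden$, preserves both the constraint and the group conditions.
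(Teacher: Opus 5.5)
Your proof is correct and follows the route the paper has in mind. The paper gives no separate argument, only calling this the $T$-equivariant version of Proposition~\ref{jun16-2} (proved by induction on $|\Lambda|$ via Proposition~\ref{aug19-1}), with equivariance supplied by the averaging trick of Lemma~\ref{jun13-8}; your equivariant $\QQ$-splitting, your averaging over the $T$-stable affine space of admissible extension data $\Phi$ (or of superdiagonal corrections), and your injectivity computation $B=M^{-1}A^{-1}M'$ are exactly that.

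As an aside, the nonabelian difficulty in the surjectivity step can be avoided altogether. By Proposition~\ref{sep28-1} with $f=\iden$, the element of your coset $S$ satisfying condition $(*)$ is unique. Conjugation by $T$ preserves $(*)$ and preserves $S$, so that representative automatically lies in $\UpT^{T}$.
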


Since $T$ on $L_{\Ud,w}$ is a direct sum of morphisms of Hodge structures,
the proof of Lemma~\ref{may7-1} also works $T$-equivariantly.
Proposition~\ref{sep28-1} also holds in the $T$-equivariant setting.

Replacing the split graded models and $\UpT$ in the definition of $\calC$ by their $T$-equivariant counterparts,
we define a category $\calC^\equiva$.
Then, we have an equivalence
\[\calC^{\equiva}\simto \MHM_{\mon,\neq 1}(\CC_t).\]
Similarly,
imposing $T$-equivariance on all the data and morphisms in
Definition~\ref{may21-3} and Definition~\ref{may22-1},
we obtain the sets
$\frC_1^{\equiva}(G)$, $\frC_2^\equiva$ and $\frC_3^\equiva$.
The analogues of Proposition~\ref{may27-9} and
Theorem~\ref{aug5-5} for $\MHM_{\mon,\neq 1}(\CC_t)$ then follow from the same arguments.